% Comparison principles for operators compatible with Finsler norms-
% Peter Morfe/Takis Souganidis

\documentclass[12pt]{amsart}
\usepackage{amssymb,latexsym,mathtools}
\usepackage[margin=1.25in]{geometry}
\usepackage{mathrsfs}
\usepackage{mathtools}
\usepackage{graphicx}
\usepackage{esint}
\usepackage{braket}
\usepackage{url, hyperref}
\newtheorem{theorem}{Theorem}
\newtheorem{lemma}{Lemma}
\newtheorem{definition}{Definition}

\newtheorem{prop}{Proposition}

\newtheorem{remark}{Remark}
\newtheorem{corollary}{Corollary}

\parindent 0pt
\newcommand{\vs}{\vskip.075in}
\newcommand{\be}{\begin{equation}}
\newcommand{\ee}{\end{equation}}
\newcommand{\R}{\mathbb{R}}

\begin{document}

\title[Discontinuous Operators Compatible with Finsler Norms]{Comparison Principles for Second-Order Elliptic/Parabolic  Equations with Discontinuities  in the Gradient Compatible with Finsler Norms}
\author[P.S.\ Morfe and P.E.\ Souganidis]{Peter S.\ Morfe and Panagiotis E. Souganidis}

\begin{abstract}  This paper is about elliptic and parabolic partial differential operators with  discontinuities in the gradient which  are compatible with a Finsler norm in a sense to be made precise.  Examples  of this type of problems arise in a number of contexts, most notably the recent work of Chatterjee and the second author \cite{chatterjee_souganidis} on scaling limits of discrete surface growth models as well as $L^{\infty}-$variational problems.  Building on the approach of Ishii \cite{ishii}, new comparison results are proven within a unified framework that includes a number of previous results as special cases. \end{abstract}

\date{\today}
\maketitle

\section{Introduction}  This paper is about the comparison principle for possibly degenerate  second-order elliptic and parabolic operators with discontinuities in the gradient that are compatible with a Finsler norm in a sense that will be defined below.  Such equations arise in a number of applications involving anisotropic geometries, including scaling limits of deterministic growth models on lattices studied recently by  Chatterjee and the second author \cite{chatterjee_souganidis} as well as $L^{\infty}-$variational problems--see, for example,  Ishibashi and Koike~\cite{ishibashi_koike}, Belloni, Juutinen  and Kawohl \cite{belloni_kawohl_juutinen}, Belloni and Kawohl~\cite{belloni_kawohl}, Di Castro, P\'{e}rez-Llanos and Urbano~\cite{di-castro_perez-llanos_urbano}, P\'{e}rez-Llanos and Rossi~\cite{ perez-llanos_rossi}, and Rossi and Saez~\cite{rossi_saez}. 
\vs
It is a fact well understood in the theory of viscosity solutions that, in order to deal with equations with discontinuities in the gradient, it is enough to come up with an appropriate class of tests functions.  The goal is to find new test functions that ``resolve'' the discontinuities, that is, along their  gradients and Hessians, the nonlinearities become continuous. This is heuristically what we mean by compatibility. 
\vs
The contribution of our paper is that we are able to produce such test functions for a large class of new equations while also covering some older results.  Later in this introduction we discuss in detail the literature and the connections with our work.

\vs
To put  the paper into context, we present next two of the most basic examples, which, in spite of their simplicity, still require the full power of our results. The first is about surface growth models and the second about Finsler infinity Laplacians.

\subsection*{Surface growth models}  
We consider  an elementary discrete parabolic equation in $\mathbb{Z}^{d}$ similar to the Kolmogorov equation for the simple random walk.  Given an initial function $u^{(0)} : \mathbb{Z}^{d} \to \mathbb{R}$ and an $\alpha \in [1,\infty)$, we define the evolution $(u^{(n)})_{n \in \mathbb{N} \cup \{0\}}$ by 
\be\label{takis1}
u^{(n+1)}(x) - u^{(n)}(x) =  M_{\alpha}(\{u^{n}(y) - u^{n}(x) \, :  \, \| y - x \| = 1\}),
\ee
where, for a finite  set $A=\{a_1,\ldots,a_d\}$, 
\be\label{E: median flow}
{M}_{\alpha}(A) = \left\{ \begin{array}{r l}
						\text{Med}(A)=\text{the median of $A$}, & \text{if} \, \, \alpha = 1, \\
						{\text{argmin}}_{y\in \R} \sum_{i=1}^d |a_i-y|^{\alpha}, & \text{otherwise.}
					\end{array} \right.
\ee

Recently, it was proved in \cite{chatterjee_souganidis}, among other things, that the parabolic scaling limit of \eqref{takis1}
is described by  the PDE
	\begin{equation} \label{E: l1 median}
		u_{t} - F_{\alpha}(Du,D^{2}u) = 0 \ \ \text{in} \ \ \mathbb{R}^{d} \times (0,\infty),
	\end{equation}
where the operator $F_{\alpha}$ is (partially) defined by
	\begin{align} 
		F_{1}(p,X) &= X_{ii} \quad \text{if} \, \, |p_{i}| < \min \left\{ |p_{1}|, \dots, |p_{i-1}|,|p_{i+1}|,\dots,|p_{d}| \right\}, \label{E: median operator} \\
		F_{\alpha}(p,X) &= \left(\sum_{j = 1}^{d} |p_{j}|^{\alpha - 2} \right)^{-1} \sum_{i = 1}^{d} |p_{i}|^{\alpha - 2} X_{ii} \quad \text{if} \, \, \alpha \in (1,\infty). \label{E: p laplace}
	\end{align}
Evidently, any extension of $F_{1}$ in  \eqref{E: median operator} to $\mathbb{R}^{d} \times \mathcal{S}^{d}$, where $\mathcal{S}^{d}$ is the space of $d\times d$ symmetric matrices,   is necessarily discontinuous. Nonetheless, the set of discontinuities has a particular structure.  It turns out that $F_{1}$ is compatible with a certain norm that is inherited from the piecewise linear geometry of $\mathbb{Z}^{d}$.  As we will show below, this fact allows us to prove that the Cauchy problem for \eqref{E: l1 median} is well-posed, which is one of the requirements for the scheme to converge to its solution.
\vs
The nonlinearities \eqref{E: p laplace} are reminiscent of the $p$-Laplace operators. 

When $\alpha \geq 2$, uniqueness follows by a standard argument since the operator is continuous away from $\{0\} \times \mathcal{S}^{d}$.  In the regime $\alpha < 2$, the discontinuities are more severe and the comparison question requires new ideas.  Nonetheless, it turns out that these nonlinearities are well-adapted to the geometry of a certain norm --- in fact, the same norm as \eqref{E: median operator} --- and, therefore, our approach applies directly.  

\subsection*{Finsler infinity Laplacian}  Notice that letting  $\alpha \to -\infty$ in \eqref{E: p laplace}  recovers $F_{1}$ as in \eqref{E: median operator} once again.  On the other hand, the limit $\alpha \to \infty$ yields  instead an equation which, if we let $\varphi$ denote the $\ell^{1}-$norm, can be written as
	\begin{equation} \label{E: l1 infinity caloric}
		u_{t} - \langle D^{2}u \cdot \partial \varphi^{*}(Du), \partial \varphi^{*}(Du) \rangle = 0 \ \ \text{in} \ \ \mathbb{R}^{d} \times (0,\infty).
	\end{equation}
Following \cite{juutinen_kawohl}, we will refer to this as the $\ell^{1}-$infinity caloric equation.
\vs	
The nonlinearity  appearing in \eqref{E: l1 infinity caloric}, which  is the (multi-valued) infinity Laplacian associated with the $\ell^{1}-$norm defined by
	\begin{equation*}
		\langle D^{2}u \cdot \partial \varphi^{*}(Du), \partial \varphi^{*}(Du) \rangle = \{ \langle D^{2}u \cdot q, q \rangle \, :  \, q \in \partial \varphi^{*}(Du)\},
	\end{equation*}
has appeared in a number of previous works, notably that of Crandall, Gunnarsson, and Wang \cite{crandall_gunnarsson_wang}, who showed how to prove a comparison principle for PDE in which it appears.
\vs 
It turns out that the Finsler infinity Laplacian is another example of an operator that is compatible with a norm.  This leads to a number of new results of interest for $L^{\infty}-$variational problems, most notably comparison for PDE involving infinity Laplacian operators associated with $C^{2}-$norms, polyhedral norms, and arbitrary Finsler norms in dimension $d = 2$.

\subsection*{Compatibility with a Finsler norm} \label{S: definition of compatibility} We now make precise what we mean by compatibility with a Finsler norm.
\vs

For the sake of concreteness, we will use parabolic equations like \eqref{E: l1 median} as our principal running example, though elliptic problems will come later.  As is more-or-less standard in the viscosity theory for equations with discontinuous coefficients (recall, for example, the level set mean curvature flow), we restate the single equation as two inequalities.  Thus, the Cauchy problem for an equation like \eqref{E: l1 median} is  cast as 
	\begin{equation} \label{E: parabolic}
		\left\{ \begin{array}{r l}
				u_{t} - \overline{F}(Du,D^{2}u) \leq 0 & \text{in} \ \ \mathbb{R}^{d} \times (0,\infty), \\
				u_{t} - \underline{F}(Du,D^{2}u) \geq 0 & \text{in} \ \ \mathbb{R}^{d} \times (0,\infty), \\
				u = u_{0} & \text{on} \quad \mathbb{R}^{d} \times \{0\}.
			\end{array} \right.
	\end{equation}
The reader can imagine that $\overline{F}$ and $\underline{F}$ are respectively the upper and lower semi-continuous envelopes of the discontinuous nonlinearity in question, though, as discussed in Remark \ref{R: ambiguous},  that will usually not be completely true. 
\vs
The  first assumptions on the pair $(\overline{F},\underline{F})$ are ellipticity and semi-continuity, that is,
	\begin{gather}
		\overline{F}(p,X + Y) \geq \overline{F}(p,X) \, \, \text{if} \, \, (p,X) \in \mathbb{R}^{d} \times \mathcal{S}^{d}, \, \, Y \in \mathcal{S}^{d}, \, \, Y \geq 0, \label{A: elliptic_1} \\ 
		\underline{F}(p,X + Y) \geq \underline{F}(p,X) \, \,\text{if} \, \, (p,X) \in \mathbb{R}^{d} \times \mathcal{S}^{d}, \, \, Y \in \mathcal{S}^{d}, \, \, Y \geq 0, \label{A: elliptic_2} \\ 
		\overline{F} \in USC(\mathbb{R}^{d} \times \mathcal{S}^{d}), \quad \underline{F} \in LSC(\mathbb{R}^{d} \times \mathcal{S}^{d}), \label{A: semicontinuous}
	\end{gather}
where 	$USC(\mathbb{R}^{d} \times \mathcal{S}^{d})$ and $LSC(\mathbb{R}^{d} \times \mathcal{S}^{d})$ are respectively the set of upper-semicontinuous and lower-semicontinous functions on $\mathbb{R}^{d} \times \mathcal{S}^{d}$. 
\vs
Next, we require that the pair $(\overline{F},\underline{ F})$ is compatible with the geometry associated with some Finsler norm $\varphi$.  Recall that $\varphi : \mathbb{R}^{d} \to [0,\infty)$ is a Finsler norm if it is positively one-homogeneous, convex, and positive definite.  In what follows, we will denote by $\varphi^{*}$ the dual norm of $\varphi$.  (See Section \ref{S: Finsler norms} for complete definitions.) 
\vs
To make the definition of compatibility precise, we need to introduce some notation.  First, given $p \in \mathbb{R}^{d}$, we define the generalized tangent space $\mathcal{T}(p,\varphi)$ by
	\begin{equation*}
		\mathcal{T}(p,\varphi) = \partial \varphi^{*}(p)^{\perp}.
	\end{equation*}

Next, following Ishii \cite{ishii}, if $V$ is any linear subspace of $\mathbb{R}^{d}$,  let $\pi_{V} : \mathbb{R}^{d} \to \mathbb{R}^{d}$ denote the orthogonal projection onto $V$ and  $\mathcal{S}_{V} \subseteq \mathcal{S}^{d}$ be the subspace consisting of symmetric matrices that only ``see" $V$, that is,
	\begin{equation*}
		\mathcal{S}_{V} = \{X \in \mathcal{S}^{d} \, :  \, \pi_{V} \circ X \circ \pi_{V} = X \}.
	\end{equation*}
Here, we are interested in the spaces $\mathcal{S}(p,\varphi)$ defined by 
	\begin{equation*}
		\mathcal{S}(p,\varphi) = \mathcal{S}_{\langle p \rangle \oplus \mathcal{T}(p,\varphi)}.
	\end{equation*}
The interested reader can find explicit computations of $\mathcal{T}(p,\varphi)$ and $\mathcal{S}(p,\varphi)$ when $\varphi$ is the $\ell^{1}-$norm in Appendix \ref{Appendix: technical lemmas}.
\vs

Now we can define the notion of compatibility.

\begin{definition}  Given a Finsler norm $\varphi : \mathbb{R}^{d} \to [0,\infty)$, a pair $(\overline{F},\underline{F})$ of operators $\overline{F},\underline{F} : \mathbb{R}^{d} \times \mathcal{S}^{d} \to \mathbb{R}$ is said to be \emph{compatible with $\varphi$} if
	\begin{equation} \label{A: consistency}
		\overline{F}(p,X) = \underline{F}(p,X) \, \, \text{for each} \, \, p \in \mathbb{R}^{d} \, \, \text{and} \, \, X \in \mathcal{S}(p,\varphi).
	\end{equation}
\end{definition}  
	
As we will see below, the discrete surface growth models investigated in \cite{chatterjee_souganidis} provide a host of examples of PDE of the form \eqref{E: parabolic} that are compatible with polyhedral Finsler norms.  These include \eqref{E: l1 median} as a particular case.
\vs
Our other main example of   nonlinearities  compatible with a Finsler norm are the Finsler infinity Laplacians (see Section \ref{S: infinity laplace}), which,  among other contexts, are of interest in the study of $L^{\infty}-$variational problems.  In Proposition \ref{P: infinity laplace}, we show that  Finsler infinity Laplacians are always compatible with their associated norms.  

\subsection*{Revisiting Ishii's method}  Our main comparison results are proved using a method that is a distillation of ideas of Ishii from \cite{ishii}.  This abstract method lays out sufficient conditions under which elliptic and parabolic PDE compatible with a Finsler norm admit comparison principles.  We will explain the method before stating our main results in the next section.
\vs 
It is shown in \cite{ishii}  that in order to prove comparison, it suffices to find a smooth Finsler norm that respects the geometry in a certain sense.  Inspired by the terminology used by Crandall, Gunnarsson, and Wang \cite{crandall_gunnarsson_wang}, we call these shielding norms.  The precise definition is given next.

\begin{definition} \label{D: good test function} Given a Finsler norm $\varphi$ in $\mathbb{R}^{d}$, we say that a Finsler norm $\psi : \mathbb{R}^{d} \to [0,\infty)$ is a \emph{shielding norm for $\varphi$} if $\psi \in C^{2}(\mathbb{R}^{d} \setminus \{0\})$ and
	\begin{equation*}
		D^{2}\psi(q) \in \mathcal{S}(D\psi(q),\varphi) \quad \text{for each} \, \, q \in \mathbb{R}^{d} \setminus \{0\}.
	\end{equation*}
\end{definition}

With these definitions in hand, it is straightforward to prove the following theorem arguing as in \cite{ishii}. We present the proof in Section \ref{S: warm up}.

	\begin{theorem} \label{T: main parabolic}  Suppose that $\varphi$ is a Finsler norm in $\mathbb{R}^{d}$ that possesses a shielding norm $\psi$, and  $(\overline{F},\underline{F})$ is a pair of nonlinearities satisfying \eqref{A: elliptic_1}, \eqref{A: elliptic_2}, and \eqref{A: semicontinuous} that are compatible with the geometry of $\varphi$. 
\vs
If $(w,v) \in USC(\mathbb{R}^{d} \times [0,T)) \times LSC(\mathbb{R}^{d} \times [0,T))$ are bounded and satisfy 
		\begin{align*}
			w_{t} - \overline{F}(Dw,D^{2}w) \leq 0 \ \  \text{in} \ \  \mathbb{R}^{d} \times (0,T), \\
			v_{t} - \underline{F}(Dv,D^{2}v) \geq 0 \ \  \text{in} \ \  \mathbb{R}^{d} \times (0,T), 
			\end{align*}
and 			
\begin{align*} \lim_{\delta \to 0^{+}} \sup \left\{ v(x,0) - w(y,0) \, :  \, \|x - y\| \leq \delta \right\} \leq 0,
		\end{align*}
	then $w \leq v$ in $\mathbb{R}^{d} \times [0,T)$.
	\end{theorem}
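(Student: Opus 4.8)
The plan is to adapt the classical doubling-of-variables argument for parabolic viscosity solutions, replacing the usual quadratic penalty $\|x-y\|^{2}$ by $\psi(x-y)^{2}$, where $\psi$ is the shielding norm supplied by Definition~\ref{D: good test function}. Building the penalty from $\psi$ rather than from the Euclidean norm is the device, going back to \cite{ishii}, that ``resolves'' the discontinuities of $\overline{F}$ and $\underline{F}$ in the gradient: the gradient that the theorem on sums assigns to both $w$ and $v$ at a doubling point $(\hat x,\hat y)$ is the common value $p:=\tfrac{1}{\varepsilon}\psi(\hat x-\hat y)D\psi(\hat x-\hat y)$, which is parallel to $D\psi(\hat x-\hat y)$, so the associated penalty Hessian will automatically sit in the compatible subspace $\mathcal{S}(p,\varphi)$ by the defining property of $\psi$.

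Suppose, for contradiction, that $M:=\sup_{\R^{d}\times[0,T)}(w-v)>0$. For parameters $\varepsilon,\beta,\eta>0$ consider
\[
\Phi(x,y,t)=w(x,t)-v(y,t)-\tfrac{1}{2\varepsilon}\psi(x-y)^{2}-\tfrac{\eta}{T-t}-\tfrac{\beta}{2}\big(\|x\|^{2}+\|y\|^{2}\big),
\]
which attains its maximum at some $(\hat x,\hat y,\hat t)$ (dependence on the parameters suppressed). The term $-\tfrac{\eta}{T-t}$ turns $w-\tfrac{\eta}{T-t}$ into a strict subsolution, with slack $\tfrac{\eta}{(T-\hat t)^{2}}$, and keeps $\hat t$ away from $T$; the quadratic term in $(x,y)$ compensates for the noncompactness of $\R^{d}$ and forces the maximum to be attained. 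Standard estimates --- the boundedness of $w$ and $v$ gives $\tfrac{1}{\varepsilon}\psi(\hat x-\hat y)^{2}\to 0$, hence $\hat x-\hat y\to 0$, as $\varepsilon\to 0^{+}$, while $\beta(\|\hat x\|^{2}+\|\hat y\|^{2})$ stays bounded --- together with $M>0$ and the ordering of the initial data show that, for $\eta,\beta$ small and $\varepsilon$ small, $\hat t\in(0,T)$ and $w(\hat x,\hat t)-v(\hat y,\hat t)$ is bounded below by a positive constant.

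At such a point, apply the parabolic theorem on sums: there are $a_{1}-a_{2}=\tfrac{\eta}{(T-\hat t)^{2}}$ and $X,Y\in\mathcal{S}^{d}$ with $(a_{1},p_{x},X)\in\overline{\mathcal{P}}^{2,+}w(\hat x,\hat t)$ and $(a_{2},p_{y},Y)\in\overline{\mathcal{P}}^{2,-}v(\hat y,\hat t)$, where $p_{x}=p+\beta\hat x$, $p_{y}=p-\beta\hat y$, and
\[
\begin{pmatrix} X & 0 \\ 0 & -Y \end{pmatrix}\ \leq\ \begin{pmatrix} Z & -Z \\ -Z & Z \end{pmatrix}+C\beta\, I,\qquad Z:=\tfrac{1}{\varepsilon}\big(\psi\,D^{2}\psi+D\psi\otimes D\psi\big)(\hat x-\hat y).
\]
Since $\psi$ is positively one-homogeneous, $p$ is a nonnegative multiple of $D\psi(\hat x-\hat y)$, so $\mathcal{S}(p,\varphi)=\mathcal{S}(D\psi(\hat x-\hat y),\varphi)$; Definition~\ref{D: good test function} gives $D^{2}\psi(\hat x-\hat y)\in\mathcal{S}(D\psi(\hat x-\hat y),\varphi)$, while $D\psi\otimes D\psi$ has range $\langle p\rangle$ and hence lies in $\mathcal{S}_{\langle p\rangle}\subseteq\mathcal{S}(p,\varphi)$; therefore $Z\in\mathcal{S}(p,\varphi)$, i.e.\ $Z=\pi_{V}Z\pi_{V}$ and $Z\xi=0$ for $\xi\in V^{\perp}$, with $V:=\langle p\rangle\oplus\mathcal{T}(p,\varphi)$. (If $\hat x=\hat y$ then $p=0$ and $Z=0$, and what follows is only easier.) Testing the matrix inequality against vectors of the form $(\xi,\pi_{V}\xi)$ and $(\pi_{V}\zeta,\zeta)$ shows that the $V^{\perp}$-block of $X$ is $\leq C\beta\pi_{V^{\perp}}$, that of $Y$ is $\geq -C\beta\pi_{V^{\perp}}$, and that the Schur complements $\tilde X,\tilde Y\in\mathcal{S}(p,\varphi)$ of $X$ and $Y$ relative to $V$ obey $X\leq\tilde X+C'\beta I$, $\tilde Y-C'\beta I\leq Y$, and $\tilde X\leq\tilde Y$ (the last because $Z\xi=0$ on $V^{\perp}$ kills the cross terms). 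Combining the viscosity inequalities, $\tfrac{\eta}{(T-\hat t)^{2}}=a_{1}-a_{2}\leq\overline{F}(p_{x},X)-\underline{F}(p_{y},Y)$; letting $\beta\to 0^{+}$ along a subsequence for which $\hat x-\hat y$, $X$, $Y$, $\hat t$ converge (so $p_{x},p_{y}\to p$ and the limiting $\tilde X$ lies in $\mathcal{S}(p,\varphi)$), and using ellipticity \eqref{A: elliptic_1}--\eqref{A: elliptic_2}, the semicontinuity \eqref{A: semicontinuous}, and compatibility \eqref{A: consistency}, one reaches
\[
0<\tfrac{\eta}{(T-\hat t)^{2}}\ \leq\ \overline{F}(p,\tilde X)-\underline{F}(p,\tilde Y)\ =\ \underline{F}(p,\tilde X)-\underline{F}(p,\tilde Y)\ \leq\ 0,
\]
a contradiction; sending $\eta\to 0^{+}$ gives $w\le v$.

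The heart of the matter --- and the step I expect to require the most care --- is the replacement of $X,Y$ by the compatible matrices $\tilde X,\tilde Y$: the matrices produced by the theorem on sums need not themselves lie in $\mathcal{S}(p,\varphi)$, and one must pass to their Schur complements onto $V$ while retaining exactly the one-sided bounds $X\leq\tilde X+o(1)$, $\tilde Y-o(1)\leq Y$, $\tilde X\leq\tilde Y$ that allow ellipticity to be applied on either side of the chain $\overline{F}(p,X)\leq\overline{F}(p,\tilde X)=\underline{F}(p,\tilde X)\leq\underline{F}(p,\tilde Y)\leq\underline{F}(p,Y)$; this is precisely where the sign of the $V^{\perp}$-blocks, itself a consequence of $Z\in\mathcal{S}(p,\varphi)$ (hence of the shielding property), is used. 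The remaining ingredients --- the localization, the bookkeeping of the $O(\beta)$ perturbations of $p_{x},p_{y}$ and of the Hessian bound, and the order of the limits $\beta\to 0$, $\varepsilon\to 0$, $\eta\to 0$ --- are routine; the only further technical nuisance is that $\psi(\cdot)^{2}$ should be arranged to be twice differentiable at the origin (e.g.\ by mollifying the penalty near $0$) so that the theorem on sums applies also in the degenerate case $\hat x=\hat y$.
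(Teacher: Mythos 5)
Your overall strategy coincides with the paper's: double variables with a penalty built from the shielding norm $\psi$, invoke a theorem-on-sums, and use the shielding property plus \eqref{A: consistency} at the doubled point. But at the step you yourself identify as ``the heart of the matter''---replacing $X,Y$ by matrices lying in $\mathcal{S}(p,\varphi)$---you take a substantially harder route than the paper, and the route you take is only sketched, not completed.

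The paper does not take Schur complements at all. It invokes Ishii's refinement of the theorem on sums (\cite{ishii}, Lemma~1), which delivers a \emph{two-sided} block inequality
\begin{equation*}
-3\begin{pmatrix}\bar{A}&0\\0&\bar{A}\end{pmatrix}\ \leq\ \begin{pmatrix}X&0\\0&-Y\end{pmatrix}\ \leq\ 3\begin{pmatrix}\bar{A}&-\bar{A}\\-\bar{A}&\bar{A}\end{pmatrix},
\end{equation*}
hence $-3\bar{A}\leq X\leq Y\leq 3\bar{A}$, where $\bar{A}=D^{2}\bigl(\tfrac{1}{4\zeta}\psi^{4}\bigr)(\xi)\in\mathcal{S}(p,\varphi)$. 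Then the fundamental matrix Lemma~\ref{L: matrix lemma} (if $0\leq A\in\mathcal{S}_{V}$ and $-cA\leq X\leq cA$, then $X\in\mathcal{S}_{V}$) gives $X,Y\in\mathcal{S}(p,\varphi)$ \emph{directly}, and compatibility plus ellipticity finish the argument in one line. Your proof only exploits the upper bound of the standard Crandall--Ishii lemma (whose generic lower bound involves $\|Z\|=O(1/\varepsilon)$ and is useless here), so you are forced to extract compatible matrices by hand via Schur complements. Your outline of that extraction is plausible---the variational characterization $\langle\tilde X\xi,\xi\rangle=\sup_{\eta\in V^{\perp}}\langle X(\xi+\eta),\xi+\eta\rangle$ does give $\tilde X\geq X$, $\tilde Y\leq Y$, and $\tilde X\leq\tilde Y$ once $\beta\to 0$, using that $Z$ kills $V^{\perp}$---but the assertions ``$X\leq\tilde X+C'\beta I$'' and ``$\tilde X\leq\tilde Y$'' are stated without proof, and making them precise for finite $\beta$ (when the $V^{\perp}$-blocks are only bounded by $C\beta$, possibly with nontrivial kernel, and pseudo-inverses enter) is genuine work that you do not do. You are, in effect, re-deriving a special case of Lemma~\ref{L: matrix lemma} by an ad hoc argument instead of noticing that the two-sided sandwich makes the issue disappear.

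A second, smaller gap: you take the penalty $\tfrac{1}{2\varepsilon}\psi(x-y)^{2}$, which for a non-Euclidean $\psi$ is not $C^{2}$ at the origin, so the theorem on sums as usually stated does not apply when $\hat x=\hat y$. You flag this (``mollify near $0$''), but mollifying risks destroying the shielding identity $D^{2}\psi(q)\in\mathcal{S}(D\psi(q),\varphi)$ exactly where you need it. The paper uses $\tfrac{1}{4\zeta}\psi(x-y)^{4}$, which is $C^{2}$ everywhere (with vanishing gradient and Hessian at $0$), so the degenerate case is covered with no extra device; note also that the paper's $\bar A$ then has the decomposition $3\zeta^{-1}\psi^{2}D\psi\otimes D\psi+\zeta^{-1}\psi^{3}D^{2}\psi\in\mathcal{S}(p,\varphi)$, exactly mirroring your argument for $Z\in\mathcal{S}(p,\varphi)$. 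Your remaining choices (the $\eta/(T-t)$ barrier, the quadratic localizer in both variables) are cosmetic variants of the paper's ($-\sigma t$ strictness term and a quartic localizer in $y$ alone) and would work as well.

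In short: right idea, but you missed the two tools (Ishii's two-sided sums lemma and Lemma~\ref{L: matrix lemma}) that make the crucial step a one-liner, and the Schur-complement substitute you propose is left as a sketch.
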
  	
There is no need to restrict attention to parabolic problems, as experts will have no trouble deducing from the proof.  See Proposition \ref{P: main elliptic} for one such extension, which is used later to apply the results to $L^{\infty}-$variational problems.

\subsection*{Main results}  We now turn to our main results, which identify two classes of Finsler norms for which it is possible to construct shielding norms.
\vs
First, we prove that it is always possible to find shielding norms for polyhedral Finsler norms.  (See Section \ref{S: Finsler norms} for the definition of polyhedral.)

\begin{theorem} \label{T: polyhedral norms} Any polyhedral Finsler norm  $\varphi$ in $\mathbb{R}^{d}$ has a shielding norm $\psi$.
Moreover, for each $\delta > 0$, there is a shielding norm $\psi_{\delta}$ for $\varphi$ such that 
	\begin{equation*}
		\sup \left\{ |\psi_{\delta}(e) - \varphi(e)| \, :  \,e \in S^{d-1} \right\} \leq \delta.
	\end{equation*}
\end{theorem}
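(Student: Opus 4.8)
The plan is to build the shielding norm $\psi$ by smoothing $\varphi$ in a way that is adapted to the facial structure of the polyhedron $B_\varphi^* = \{p : \varphi^*(p) \le 1\}$. Recall that a polyhedral Finsler norm $\varphi$ has a unit ball $B_\varphi = \{\varphi \le 1\}$ that is a polytope, equivalently the dual unit ball $B_\varphi^*$ is a polytope, and the subdifferential map $p \mapsto \partial\varphi^*(p)$ is piecewise constant: as $p$ ranges over the (relatively open) normal cone to a face $G$ of $B_\varphi^*$, $\partial\varphi^*(p)$ is exactly that face $G$, so $\mathcal{T}(p,\varphi) = (\mathrm{aff}(G) - \mathrm{aff}(G))^\perp$ is a fixed subspace and $\langle p\rangle \oplus \mathcal{T}(p,\varphi)$ is a fixed subspace that is ``large'' exactly where $G$ is a vertex (then $\mathcal T(p,\varphi) = \mathbb R^d$ and $\mathcal S(p,\varphi) = \mathcal S^d$, so the constraint $D^2\psi(q) \in \mathcal S(D\psi(q),\varphi)$ is vacuous) and most restrictive on the top-dimensional faces. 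So the shielding condition is really a constraint on the Hessian of $\psi$ only in the directions normal to the cones over the vertices of $B_\varphi^*$.

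The first step is to set up the combinatorics: decompose $\mathbb R^d \setminus\{0\}$ according to which face of $B_\varphi^*$ realizes $\varphi^*(p)$ — that is, write $\mathbb R^d\setminus\{0\} = \bigcup_G C_G$ where $C_G$ is the (closed) cone over the face $G$, and on $\mathrm{int}(C_G)$ we have $\varphi^*(p) = \langle p, \nu_G\rangle / h_G$ for the outer normal $\nu_G$ of the supporting hyperplane of $B_\varphi$ containing $G$'s dual face. Equivalently, working with $\varphi$ itself: on the cone over a facet, $\varphi$ is linear, so $D^2\varphi = 0$ there, which trivially lies in $\mathcal S(p,\varphi)$; the trouble is only along the lower-dimensional ``creases'' where $\varphi$ is not $C^2$. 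The idea is to mollify $\varphi$ near these creases while keeping the gradient inside the correct normal cone, so that the Hessian that gets generated by the smoothing points in admissible directions. Concretely, I would try $\psi = $ a carefully chosen regularization such as an inf-convolution / Moreau–Yosida type smoothing $\psi_\varepsilon(q) = \inf_{z}\big(\varphi(q-z) + \tfrac{1}{2\varepsilon}\|z\|^2\big)$, or its dual-side counterpart, and check that near a crease of codimension $k$ the second derivatives it produces lie in $\mathcal S_V$ for $V$ the span of $p$ together with the normals to the faces meeting at that crease — which is exactly what $\mathcal S(p,\varphi)$ allows, because along that crease the subdifferential $\partial\varphi^*(p)$ is a face whose affine hull is orthogonal to precisely those normal directions.

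The key structural fact to verify is that the smoothing does not ``leak'' Hessian into forbidden directions: on the cone over a facet $F$ (top-dimensional face of $B_\varphi$) the norm is affine, so a smoothing supported away from a neighborhood of that facet's cone leaves $\psi$ affine there and $D^2\psi = 0$; while on a neighborhood of a codimension-$k$ crease, the relevant piece of $\varphi$ is $q \mapsto \max_{i}\langle q,\nu_i\rangle$ over the finitely many facet-normals $\nu_i$ meeting there, a function of the $k$-dimensional variable $(\langle q,\nu_i\rangle)_i$, and standard convex smoothings of a max-of-linear-functions produce Hessians supported in $\mathrm{span}\{\nu_i\}$. That span is contained in $\langle p\rangle \oplus \mathcal T(p,\varphi)$ for every $p$ in (a neighborhood of) that crease, which is the shielding condition. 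One must also check positive definiteness (strict convexity) of the smoothed norm and one-homogeneity — homogeneity is restored by replacing $\psi_\varepsilon$ with its one-homogeneous ``radial'' version $q \mapsto \|q\|\,\psi_\varepsilon(q/\|q\|)$ or by using a homogeneous smoothing kernel on the sphere — and that these operations preserve the Hessian-containment property, which is a cone condition invariant under positive scaling.

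For the second statement, the $\delta$-approximation, one runs the same construction with smoothing parameter $\varepsilon = \varepsilon(\delta) \to 0$: since the Moreau–Yosida / inf-convolution smoothing of a Lipschitz convex function converges uniformly on compacta as $\varepsilon \to 0$, and everything is one-homogeneous, uniform closeness of $\psi_\delta$ and $\varphi$ on $S^{d-1}$ follows, while the shielding (Hessian-containment) property holds for every $\varepsilon>0$ by the argument above. I expect the main obstacle to be the bookkeeping at the lower-dimensional creases: one must choose the smoothing so that near a codimension-$k$ crease the gradient $D\psi(q)$ stays in the relatively open normal cone of the corresponding face $G$ of $B_\varphi^*$ (so that $\mathcal T(D\psi(q),\varphi)$ really is the full orthogonal complement of $\mathrm{aff}(G)$ and the Hessian is allowed), and simultaneously ensure the contributions from creases of different codimensions do not interfere — this is essentially a partition-of-unity / nested-neighborhoods argument over the face poset of $B_\varphi^*$, and getting the $C^2$ matching across the interfaces of these neighborhoods is the delicate part.
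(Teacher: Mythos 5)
Your high-level outline is the same as the paper's: mollify the polyhedral norm, verify that the Hessian generated by the mollification lands in the admissible subspace $\mathcal{S}(\cdot,\varphi)$ because the discontinuity set of $D\varphi$ is a union of cones over the faces of $\{\varphi^*\le 1\}$, and then restore one-homogeneity. You also correctly identify the two places where the argument could break. However, you leave both of those places as named difficulties rather than resolving them, and the fix you propose for the second one is, as stated, wrong. Those are genuine gaps, and they are precisely where the paper's two lemmas do real work.

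First, the global Hessian-containment. You propose to handle the creases ``codimension by codimension'' via a partition of unity over the face poset, and you flag the $C^2$ matching across interfaces as the delicate part. The paper avoids this entirely by using a single global mollification $f_\epsilon = \varphi*\eta_\epsilon$ and proving (Lemma \ref{L: discrete geometry}, which rests on Durier's Diff-Max characterization of polyhedral norms) that for all $q\in\{\varphi\ge c\}$ and $\epsilon<\epsilon_c$, the indices $i$ with $B(q,\epsilon)\cap M_i\ne\emptyset$ all share a common cell; geometrically, the mollification never ``straddles'' two creases that do not meet. This is the structural input that replaces your partition-of-unity and makes the computation of $D^2 f_\epsilon(q)$ as a nonnegative combination of rank-one tensors $(p_i-p_j)\otimes(p_i-p_j)$ with $p_i-p_j\in\mathcal{T}(Df_\epsilon(q))$ work at \emph{every} point of $\{\varphi\ge c\}$ simultaneously. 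Without some such uniform separation statement, the mollification near a codimension-$k$ crease can contaminate the Hessian with directions from an adjacent, non-incident crease, and the containment fails; your sketch does not rule this out.

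Second, the restoration of homogeneity. You propose the radial rescaling $q\mapsto\|q\|\,\psi_\varepsilon(q/\|q\|)$ and assert it preserves the Hessian-containment ``because it is a cone condition invariant under positive scaling.'' That justification does not hold: radial rescaling is not a scaling of the function, and the chain rule produces second-order terms mixing $D\psi_\varepsilon$, $D^2\psi_\varepsilon$, and the second fundamental form of $S^{d-1}$, none of which are controlled by your containment hypothesis on $D^2\psi_\varepsilon$ alone. The paper's resolution is the level-set trick of Proposition \ref{P: polyhedral case}(b): take $E_n=\{f_n\le 1\}$ as the unit ball of a new norm $\varphi_n$. Because a Finsler norm's Hessian at a point of its unit sphere is determined by the second fundamental form of the sphere, and $\{\varphi_n=1\}=\{f_n=1\}$, one gets
\[
D^2\varphi_n(q)=\bigl(\mathrm{Id}-\widehat{Df_n}(q)\otimes\widehat{Df_n}(q)\bigr)\,D^2 f_n(q)\,\bigl(\mathrm{Id}-\widehat{Df_n}(q)\otimes\widehat{Df_n}(q)\bigr),
\]
and the inclusion $D^2 f_n(q)\in\mathcal{S}_{\mathcal{T}(Df_n(q))}$, together with the identity $\varphi^*(Df_n)\equiv 1$ from part (a), passes through the projection and gives $D^2\varphi_n(q)\in\mathcal{S}(D\varphi_n(q),\varphi)$. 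This is the step your proposal is missing, and replacing it by radial projection would require a separate (and nontrivial) verification that you do not supply.

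A minor point: you suggest Moreau–Yosida/inf-convolution smoothing, whereas the paper uses convolution with a standard mollifier. Either produces a smooth convex approximation, but the convolution makes the computation of $D^2 f_\epsilon$ as an explicit weighted sum of rank-one tensors over the facets of the crease set essentially immediate, which is what feeds directly into the containment argument; the inf-convolution version would need an analogous explicit second-derivative formula. Finally, your $\delta$-approximation step is fine in spirit but must be stated for the level-set norms $\varphi_n$, not the raw smoothings.
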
  

Combining Ishii's method with Theorem \ref{T: polyhedral norms}, we obtain comparison results for PDE compatible with polyhedral geometries.  In particular, this applies to the scaling limits studied in \cite{chatterjee_souganidis}.
\vs 
The other nontrivial setting where we know how to find shielding norms is that of $C^{2}-$Finsler norms.  This is a novel result in itself that has its own applications.

\begin{theorem} \label{T: C2 norms}  If $\varphi \in C^{2}(\mathbb{R}^{d} \setminus \{0\})$ is a Finsler norm, then $\varphi$ is a shielding norm for itself.  \end{theorem}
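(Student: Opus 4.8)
The claim is that a $C^2$ Finsler norm $\varphi$ satisfies $D^2\varphi(q) \in \mathcal{S}(D\varphi(q),\varphi)$ for all $q \neq 0$, i.e.\ that $D^2\varphi(q)$ "sees" only the subspace $\langle D\varphi(q) \rangle \oplus \mathcal{T}(D\varphi(q),\varphi)$. The plan is to understand $\mathcal{T}(D\varphi(q),\varphi) = \partial\varphi^*(D\varphi(q))^\perp$ when $\varphi$ is smooth, and then check the projection identity $\pi_V \circ D^2\varphi(q) \circ \pi_V = D^2\varphi(q)$ directly, where $V = \langle D\varphi(q)\rangle \oplus \mathcal{T}(D\varphi(q),\varphi)$. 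Equivalently, writing $N$ for the orthogonal complement $V^\perp$, it suffices to show $D^2\varphi(q)\, n = 0$ for every $n \in N$, since $D^2\varphi(q)$ is symmetric.

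First I would identify $V^\perp$. By the standard duality for smooth norms, $D\varphi(q)$ is the unique point of $\{p : \varphi^*(p)=1\}$ at which the outer normal is parallel to $q$; moreover $\partial\varphi^*(D\varphi(q)) = \{ q / \varphi(q) \}$ is a singleton (because $\varphi$ is differentiable, its dual has a unique subgradient direction there — this is the smoothness$\leftrightarrow$strict-convexity duality, and a $C^2$ Finsler norm is in particular differentiable off the origin). Hence $\mathcal{T}(D\varphi(q),\varphi) = \langle q\rangle^\perp$, the ordinary hyperplane orthogonal to $q$, and therefore $V = \langle D\varphi(q)\rangle \oplus \langle q \rangle^\perp$. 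Its orthogonal complement $N$ is the one-dimensional space spanned by the component of $q$ orthogonal to $\langle D\varphi(q)\rangle^\perp$... more cleanly: $N = V^\perp = \langle D\varphi(q)\rangle^\perp \cap \langle q\rangle$, which is $\langle q \rangle$ itself precisely when $q \perp D\varphi(q)$, and is $\{0\}$ otherwise. But $q$ and $D\varphi(q)$ are never orthogonal: by one-homogeneity, Euler's identity gives $\langle D\varphi(q), q\rangle = \varphi(q) > 0$. So $N = \{0\}$, $V = \mathbb{R}^d$, and the inclusion $D^2\varphi(q) \in \mathcal{S}_{\mathbb{R}^d} = \mathcal{S}^d$ is automatic.

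Wait — that makes the statement trivial, which suggests the intended reading of $\mathcal{S}(p,\varphi)$ keeps track of more structure, or that the genuinely useful content is the companion fact that $D^2\varphi(q)$ annihilates $q$. Indeed, differentiating Euler's identity $\langle D\varphi(q), q\rangle = \varphi(q)$, or just differentiating the $0$-homogeneity of $D\varphi$, yields $D^2\varphi(q)\, q = 0$; thus $D^2\varphi(q)$ lives in $\mathcal{S}_{\langle q\rangle^\perp}$, a strictly smaller space than $\mathcal{S}^d$ when $d \geq 2$. Reconciling this with $\mathcal{S}(D\varphi(q),\varphi)$: since $\partial\varphi^*(D\varphi(q)) = \langle q\rangle$ (up to normalization), we have $\mathcal{T}(D\varphi(q),\varphi) = \langle q\rangle^\perp$, so $\langle D\varphi(q)\rangle \oplus \mathcal{T}(D\varphi(q),\varphi)$ is $\mathbb{R}^d$ unless $D\varphi(q) \in \langle q\rangle^\perp$ — impossible, as above. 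The cleanest route to the theorem as literally stated is therefore: (i) prove $\partial\varphi^*(D\varphi(q))$ is the ray through $q$, using smoothness of $\varphi$ and the biduality $\partial\varphi^* = (\partial\varphi)^{-1}$; (ii) conclude $\mathcal{T}(D\varphi(q),\varphi) = q^\perp$ and hence $\langle D\varphi(q)\rangle \oplus \mathcal{T}(D\varphi(q),\varphi) = \mathbb{R}^d$ via Euler's identity $\langle D\varphi(q),q\rangle = \varphi(q)\neq 0$; (iii) observe $\mathcal{S}_{\mathbb{R}^d} = \mathcal{S}^d \ni D^2\varphi(q)$ trivially.

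The main obstacle — really the only place any care is needed — is step (i), the identification $\partial\varphi^*(D\varphi(q)) = \{q/\varphi(q)\}$. This requires knowing that $D\varphi$, restricted to the unit sphere of $\varphi$, is exactly the inverse of the analogous Gauss-type map for $\varphi^*$; in subgradient language, $p \in \partial\varphi(q) \iff q \in \partial\varphi^*(p)$ for one-homogeneous convex functions, combined with $\partial\varphi(q) = \{D\varphi(q)\}$ from $C^2$ regularity. I would either cite the relevant facts from the paper's own Section on Finsler norms (where duality is set up) or give the two-line convex-analytic argument: from $\varphi(q) + \varphi^*(D\varphi(q)) = \langle D\varphi(q), q\rangle$ (Fenchel equality, which holds since $D\varphi(q) \in \partial\varphi(q)$) one reads off that $q$ attains the sup defining the subgradient of $\varphi^*$ at $D\varphi(q)$, hence $q \in \partial\varphi^*(D\varphi(q))$; and this inclusion is in fact an equality of rays by positive homogeneity. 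Everything else is Euler's identity and a definition-chase.
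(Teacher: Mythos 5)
Your proof hinges on the claim in step (i) that $\partial\varphi^{*}(D\varphi(q)) = \{q/\varphi(q)\}$ is a singleton, and you justify this by the smoothness/strict-convexity duality: since $\varphi$ is differentiable, its dual has a unique subgradient. This gets the duality backwards. Differentiability of $\varphi$ away from the origin is equivalent to strict convexity of the \emph{dual} unit ball $\{\varphi^{*}\le 1\}$, i.e.\ to $\partial\varphi(q)$ being a singleton for $q\neq 0$. It says nothing about $\partial\varphi^{*}(p)$, which is a singleton for all $p\neq 0$ if and only if the \emph{primal} unit ball $\{\varphi\le 1\}$ is strictly convex --- and that is an independent condition, not a consequence of $\varphi\in C^{2}$. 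There are genuine $C^{2}$ Finsler norms whose unit ball has flat facets (take the Minkowski gauge of the Minkowski sum of the curve $\{x^{4}+y^{4}=1\}$ with a line segment: the boundary is $C^{2}$ because the curvature vanishes to high order at the junction, yet it contains honest line segments). At a $q$ in the relative interior of such a facet, $\partial\varphi^{*}(D\varphi(q))$ is the whole facet, not the ray through $q$.

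Once you see this, the ``trivial'' outcome you were troubled by evaporates. If your step (i) were true, then $\mathcal{S}(D\varphi(q),\varphi)=\mathcal{S}^{d}$ always and Theorem \ref{T: C2 norms} would be subsumed by Proposition \ref{P: strictly convex norms}; the paper explicitly emphasizes, right after stating the theorem, that the interesting case is when the discontinuity set of the associated infinity Laplacian is strictly larger than $\{0\}\times\mathcal{S}^{d}$, which is exactly the non-strictly-convex case. The paper's actual argument (Proposition \ref{P: smooth case}) proves the genuinely nontrivial inclusion $\partial\varphi^{*}(D\varphi(q))\subseteq\mathrm{Ker}\,D^{2}\varphi(q)$ even when this subdifferential is a whole face $F$ of $\{\varphi\le 1\}$: one observes that $D\varphi$ is constant on $\mathrm{ri}(F)$, differentiates that constancy along directions in $F$ to kill the Hessian there, and handles points of $\mathrm{bdry}\,F$ by a continuity/limiting argument. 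Your observation that $D^{2}\varphi(q)q=0$ via Euler's identity is correct but only recovers the one-dimensional piece $\langle q\rangle\subseteq\mathrm{Ker}\,D^{2}\varphi(q)$; it does not reach the rest of the face, and the rest of the face is exactly where the content of the theorem lives.
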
  

As shown later in the paper, Theorem~\ref{T: C2 norms} implies that the infinity Laplacian associated with any $C^{2}-$Finsler norm can be treated much like the Euclidean infinity Laplacian, even though its set of discontinuities may be much larger than $\{0\} \times \mathcal{S}^{d}$.  This fact, which seems not to have been observed previously in the literature, has interesting ramifications for the theory of $L^{\infty}-$variational problems.
\vs
Finally, for clarity, it is worth noting the (ultimately trivial) fact that, if $\varphi^{*} \in C^{1}(\mathbb{R}^{d} \setminus \{0\})$, then the Euclidean norm serves as a shielding norm.

\begin{prop} \label{P: strictly convex norms} If $\varphi$ is a Finsler norm in $\mathbb{R}^{d}$ and $\varphi^{*} \in C^{1}(\mathbb{R}^{d} \setminus \{0\})$, then the Euclidean norm $\|\cdot\|$ is a shielding norm for $\varphi$.  Furthermore, in this case, a pair $(\overline{F},\underline{F})$ satisfying \eqref{A: elliptic_1}, \eqref{A: elliptic_2}, and \eqref{A: semicontinuous} is compatible with $\varphi$ if and only if 
	\begin{equation} \label{E: mean curvature type}
		\overline{F} \equiv \underline{F} \quad \text{in} \, \, (\mathbb{R}^{d} \setminus \{0\}) \times \mathcal{S}^{d} \quad \text{and} \quad \overline{F}(0,0) = \underline{F}(0,0).
	\end{equation}
\end{prop}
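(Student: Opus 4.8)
The plan is to reduce both assertions to an explicit identification of the spaces $\mathcal{S}(p,\varphi)$; once that is done, everything is bookkeeping with the definitions. Concretely, I will show that the regularity hypothesis on $\varphi^{*}$ forces $\mathcal{S}(p,\varphi) = \mathcal{S}^{d}$ for every $p \neq 0$, while $\mathcal{S}(0,\varphi) = \{0\}$.

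First I would handle $p \neq 0$. Since $\varphi^{*} \in C^{1}(\mathbb{R}^{d} \setminus \{0\})$, the subdifferential $\partial \varphi^{*}(p)$ is the singleton $\{D\varphi^{*}(p)\}$, so $\mathcal{T}(p,\varphi) = D\varphi^{*}(p)^{\perp}$. Because $\varphi^{*}$ is positively one-homogeneous, differentiating the homogeneity relation (Euler's identity) gives $\langle D\varphi^{*}(p), p \rangle = \varphi^{*}(p) > 0$; in particular $D\varphi^{*}(p) \neq 0$ and $p \notin \mathcal{T}(p,\varphi)$. Thus $\mathcal{T}(p,\varphi)$ is a hyperplane through the origin not containing $p$, whence $\langle p \rangle \oplus \mathcal{T}(p,\varphi) = \mathbb{R}^{d}$ and therefore $\mathcal{S}(p,\varphi) = \mathcal{S}_{\mathbb{R}^{d}} = \mathcal{S}^{d}$.

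Next, for $p = 0$, I would identify $\partial \varphi^{*}(0)$ with the closed $\varphi$-unit ball. Indeed, $q \in \partial \varphi^{*}(0)$ if and only if $\langle q, v \rangle \leq \varphi^{*}(v)$ for all $v \in \mathbb{R}^{d}$, and since $\varphi^{*}(v) = \sup \{ \langle q', v \rangle \, : \, \varphi(q') \leq 1 \}$ together with biduality $\varphi^{**} = \varphi$, this is equivalent to $\varphi(q) \leq 1$. This ball contains a neighborhood of the origin, so its annihilator is $\{0\}$; hence $\mathcal{T}(0,\varphi) = \{0\}$, $\langle 0 \rangle \oplus \mathcal{T}(0,\varphi) = \{0\}$, and $\mathcal{S}(0,\varphi) = \mathcal{S}_{\{0\}} = \{0\}$.

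With these computations in hand the two conclusions follow at once. For the first, the Euclidean norm $\|\cdot\|$ belongs to $C^{2}(\mathbb{R}^{d} \setminus \{0\})$ with $D\|q\| = q/\|q\| \neq 0$, so $D^{2}\|q\| \in \mathcal{S}^{d} = \mathcal{S}(D\|q\|,\varphi)$ for every $q \neq 0$, which is exactly the shielding condition of Definition \ref{D: good test function}. For the second, compatibility \eqref{A: consistency} requires precisely that $\overline{F}(p,X) = \underline{F}(p,X)$ whenever $X \in \mathcal{S}(p,\varphi)$; by the computations above this holds if and only if $\overline{F} \equiv \underline{F}$ on $(\mathbb{R}^{d} \setminus \{0\}) \times \mathcal{S}^{d}$ and $\overline{F}(0,0) = \underline{F}(0,0)$, i.e.\ \eqref{E: mean curvature type}. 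The only mildly delicate step in the argument is the identification of $\partial \varphi^{*}(0)$, which rests on $\varphi^{**} = \varphi$; the rest is a direct unwinding of the definitions of $\mathcal{T}(p,\varphi)$, $\mathcal{S}_{V}$, and $\mathcal{S}(p,\varphi)$.
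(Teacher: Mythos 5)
Your argument is essentially the same as the paper's: identify $\mathcal{S}(p,\varphi) = \mathcal{S}^{d}$ for $p \neq 0$ via the singleton subdifferential and $\mathcal{S}(0,\varphi) = \{0\}$, then read off both conclusions from the definitions. You are in fact slightly more careful than the paper at two points that it leaves implicit — invoking Euler's identity to verify $p \notin \mathcal{T}(p,\varphi)$ so that the sum is genuinely all of $\mathbb{R}^{d}$, and spelling out the $p = 0$ case — but the route is the same.
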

\vs
The proposition above shows that our framework includes as a special case classical examples like the $1-$Laplacian, that is, the level set mean curvature flow.
\vs
Lastly, in the appendix, we show that it is possible to prove in dimension $d = 2$ comparison in great generality.  This follows from a construction by the first author in \cite{level_set}, which is itself an improvement of a construction by Ohnuma and Sato \cite{ohnuma_sato}; see also \ \cite[Section 5.3]{crandall_gunnarsson_wang}.  Note that we do not proceed by constructing shielding norms in this setting.  We expect that, even in dimension two, shielding norms do not always exist, but it is clear how to find suitable approximations in that dimension.  
\vs
In view of the preceding discussion,  we have a good understanding of PDE compatible with a Finsler norm $\varphi$ if 
\begin{equation} \label{A: running assumptions}
\begin{split}
&\text{either} \ \ \text{(i)}~\varphi \in C^{2}(\mathbb{R}^{d} \setminus \{0\}), \quad \text{(ii)}~\varphi^{*} \in C^{1}(\mathbb{R}^{d} \setminus \{0\}),\\
&\text{(iii)} \ \  \varphi \, \, \text{is polyhedral}, \quad \text{or} \quad \text{(iv)} \, \, d = 2.
\end{split}
\end{equation}

As is generally the case in the theory of viscosity solutions, we can use the comparison results proved here to establish the existence and uniqueness of solutions of various problems.  We make that explicit for one of our main examples, the Finsler infinity caloric equation.

\begin{theorem} \label{T: infinity caloric} Suppose that $\varphi$ is a Finsler norm in $\mathbb{R}^{d}$ and \eqref{A: running assumptions} holds.  Given $u_{0} \in BUC(\mathbb{R}^{d})$ and $T > 0$, the Cauchy problem
	\begin{equation} \label{E: infinity caloric}
		\left\{ \begin{array}{r l}
			u_{t} - \langle D^{2}u \cdot \partial \varphi^{*}(Du), \partial \varphi^{*}(Du) \rangle = 0 & \text{in} \ \ \mathbb{R}^{d} \times (0,T), \\
			u = u_{0} & \text{on} \ \ \mathbb{R}^{d} \times \{0\}
		\end{array} \right.
	\end{equation}
has a unique viscosity solution $u \in BUC(\mathbb{R}^{d} \times [0,T])$.
\end{theorem}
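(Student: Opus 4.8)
The plan is to run the standard viscosity solutions program: uniqueness from the comparison principle, existence by Perron's method, and attainment of the initial data by means of explicit barriers. First I would set the stage. The upper and lower envelopes $(\overline{F},\underline{F})$ of the Finsler infinity Laplacian satisfy the ellipticity and semicontinuity hypotheses \eqref{A: elliptic_1}, \eqref{A: elliptic_2}, \eqref{A: semicontinuous}, and by Proposition \ref{P: infinity laplace} the pair is compatible with $\varphi$. Under hypotheses (i), (ii), or (iii) of \eqref{A: running assumptions}, Theorems \ref{T: polyhedral norms} and \ref{T: C2 norms} and Proposition \ref{P: strictly convex norms} produce a shielding norm for $\varphi$, so Theorem \ref{T: main parabolic} applies and yields comparison between bounded semicontinuous sub- and supersolutions of the equation in \eqref{E: infinity caloric}; under hypothesis (iv) the same conclusion is provided by the $d=2$ comparison result proved in the appendix. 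Given this, uniqueness is immediate: if $u,\tilde{u}\in BUC(\mathbb{R}^{d}\times[0,T])$ both solve \eqref{E: infinity caloric}, then each is simultaneously a sub- and a supersolution with the same (uniformly continuous) initial trace, so comparison, applied twice, forces $u=\tilde{u}$.

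For existence I would let $u$ be the supremum of all bounded subsolutions $w$ of the equation in \eqref{E: infinity caloric} with $w(\cdot,0)\le u_{0}$. This family is nonempty (it contains the constant $-\|u_{0}\|_{\infty}$, which solves the equation) and is bounded above by $\|u_{0}\|_{\infty}$ (by comparison with the constant supersolution $\|u_{0}\|_{\infty}$), so the usual perturbation argument of Perron's method, as in \cite{ishii}, shows that $u^{*}$ is a subsolution and $u_{*}$ a supersolution. The delicate point is to show that $u$ attains $u_{0}$ continuously, uniformly in $x$, which I would do with smoothed Finsler cones: given $x_{0}\in\mathbb{R}^{d}$, $\rho>0$, and a modulus of continuity $\omega$ for $u_{0}$, set
\[
\overline{u}_{x_{0},\rho}(x,t)=u_{0}(x_{0})+\omega(\rho)+M_{\rho}\Big(\big(|x-x_{0}|^{2}+\rho^{2}\big)^{1/2}-\rho\Big)+\frac{M_{\rho}C_{\varphi}^{2}}{\rho}\,t,
\]
where $C_{\varphi}=\max\{|q|:\varphi(q)=1\}$ and $M_{\rho}\sim\rho^{-1}\|u_{0}\|_{\infty}$ is chosen so that $\overline{u}_{x_{0},\rho}(\cdot,0)\ge u_{0}$ on all of $\mathbb{R}^{d}$; let $\underline{u}_{x_{0},\rho}$ be the reflected barrier. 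Using $D^{2}\big((|x-x_{0}|^{2}+\rho^{2})^{1/2}\big)\le\rho^{-1}I$, $|q|\le C_{\varphi}$ for $q\in\partial\varphi^{*}(p)$, ellipticity, and the behaviour of $\underline{F}$ at $p=0$, one checks that $\overline{u}_{x_{0},\rho}$ is a classical, hence viscosity, supersolution of the equation in \eqref{E: infinity caloric} --- note that this smoothing avoids the vertex of the raw cone $a+b\,\varphi^{*}(\cdot-x_{0})$, which fails to be a supersolution there. By comparison $u\le\overline{u}_{x_{0},\rho}$, so $u(x_{0},t)-u_{0}(x_{0})\le\omega(\rho)+K\|u_{0}\|_{\infty}\rho^{-2}t$ for a constant $K=K(\varphi)$; optimizing in $\rho$ (say $\rho=t^{1/3}$), and using the lower barriers for the matching bound, produces an initial-layer modulus $\eta(t)\downarrow 0$ with $|u(x_{0},t)-u_{0}(x_{0})|\le\eta(t)$ uniformly in $x_{0}$. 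In particular $u^{*}(\cdot,0)\le u_{0}\le u_{*}(\cdot,0)$, so comparison forces $u^{*}\le u_{*}$, and since $u_{*}\le u\le u^{*}$ always, $u$ is continuous and solves \eqref{E: infinity caloric}.

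Finally I would upgrade $u$ to $BUC(\mathbb{R}^{d}\times[0,T])$ using the invariances of the equation. Spatial uniform continuity, uniform in $t$, follows from $x$-translation invariance: for $h\in\mathbb{R}^{d}$, $u(\cdot+h,\cdot)$ is a subsolution and $u+\omega(|h|)$ a supersolution, and they are ordered at $t=0$, so $u(x+h,t)\le u(x,t)+\omega(|h|)$ for all $t$, and symmetry gives the reverse; thus $\omega$ is a spatial modulus uniform in $t$. Temporal uniform continuity, uniform in $x$, then follows from $t$-translation invariance together with the initial-layer estimate: $u(\cdot,\cdot+s)$ is a solution whose value at $t=0$ differs from $u_{0}$ by at most $\eta(s)$, so comparison gives $|u(x,t+s)-u(x,t)|\le\eta(s)$ for all $x$ and $t$. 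Together these yield $u\in BUC(\mathbb{R}^{d}\times[0,T])$.

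I expect the genuinely technical step to be the barrier construction --- specifically, exhibiting barriers that are honest viscosity supersolutions despite the discontinuity of the Finsler infinity Laplacian and that produce an initial-layer modulus independent of the base point $x_{0}$. The key realization is that the naive Finsler cone fails at its vertex but a suitably smoothed and steepened version works; once that is in hand, Perron's construction on the unbounded domain and the translation-invariance arguments for the modulus of continuity are routine given the comparison principle of Theorem \ref{T: main parabolic} (respectively its $d=2$ counterpart from the appendix).
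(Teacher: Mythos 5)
Your proof is correct and follows exactly the route the paper leaves implicit: the paper never writes out a proof of Theorem~\ref{T: infinity caloric}, instead noting that existence and uniqueness follow from the comparison results ``as is generally the case in the theory of viscosity solutions,'' and your argument — Proposition~\ref{P: infinity laplace} plus the relevant shielding-norm result (or the $d=2$ comparison of Appendix~\ref{Appendix: 2d}) to get comparison, then Perron's method, smoothed cone barriers, and translation invariance — is precisely that standard program spelled out. One small note on your own prose: despite the lead-in about ``smoothed Finsler cones,'' the barrier you actually build is a smoothed \emph{Euclidean} cone, which is the right choice here (it is $C^2$ regardless of $\varphi$, and the uniform bound $|q|\le C_\varphi$ for $q\in\partial\varphi^*(p)$, including at $p=0$, makes the supersolution check go through at the vertex).
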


\subsection*{Literature review}  Throughout the paper, we utilize the Crandall-Lions theory of viscosity solutions of elliptic and parabolic PDE.  We refer to the Crandall, Ishii and Lions ``User's Guide'' \cite{user} for the fundamental results of the theory and a historical overview.
\vs
A number of works undergird the treatment of discontinuous operators in this paper.  Most notably, Ishii's approach to level set PDE with discontinuous coefficients in \cite{ishii} was the inspiration for what eventually became our definition of compatibility and shielding norms.  
\vs
Other work that preceded Ishii's development of level set PDE were contributed by Evans and Spruck \cite{EvansSpruck} and Chen, Giga and Goto \cite{CGG} for mean curvature-type equations with  bounded singularity only at $p=0$ and Ishii and Souganidis \cite{IshiiSoug} for unbounded singularities only at $p=0$, Gurtin, Soner, and Souganidis \cite{gurtin_soner_souganidis} and Ohnuma and Sato \cite{ohnuma_sato} both for bounded singularities at $0$ and finitely many other directions. Since then, the first author extended in \cite{level_set} the idea of \cite{ohnuma_sato} to deal with countably many bounded discontinuities in the context of homogenization of level set PDE.  As discussed in Appendix \ref{Appendix: 2d}, that approach adapts more-or-less immediately to deal with the countable discontinuities of Finsler infinity Laplacian operators in dimension two.
\vs
Crandall, Gunnarsson, and Wang showed in \cite{crandall_gunnarsson_wang} how to prove comparison results for PDE involving certain Finsler infinity Laplacian operators via an approach similar to the one in \cite{ishii}.  That work anticipated but did not prove our results on polyhedral and two-dimensional norms.  In particular, our construction of shielding norms uses the same mollification trick originally suggested in \cite{crandall_gunnarsson_wang}.
\vs
Note that while the arguments in \cite{crandall_gunnarsson_wang} apply to Finsler infinity Laplacian operators, they do not seem to apply to an operator like the one in \eqref{E: median operator}.
\vs
To the best of our knowledge, \cite{crandall_gunnarsson_wang} is the only previous work providing a complete proof of comparison for an operator compatible with a polyhedral norm.  A number of earlier papers observed that such operators, usually the $\ell^{1}-$infinity Laplacian, arise as scaling limits of variational problems; see \cite{ishibashi_koike, belloni_kawohl, di-castro_perez-llanos_urbano, perez-llanos_rossi, rossi_saez}.

\subsection*{Organization of the paper}  In the next section, we review some preliminaries that are used throughout.  Section 3 is a warm up: we present the proof of Theorem \ref{T: main parabolic}, state an analogous result for elliptic problems and then discuss the $C^{2}$ and strictly convex settings, that is, Theorem \ref{T: C2 norms} and Proposition \ref{P: strictly convex norms}.  In Section 4, we prove the existence of shielding norms in the polyhedral setting.  Section 5 uses the previous results to establish comparison for the class of PDE from \cite{chatterjee_souganidis}.  Section 6 revisits questions pertaining to $L^{\infty}-$variational problems and related elliptic PDE.  
\vs
Finally, there are two appendices.  Appendix \ref{Appendix: 2d} describes some other comparison results that can be obtained in the simpler case when $d = 2$.  Appendix \ref{Appendix: technical lemmas} contains supplemental computations, the first being an explicit computation of the matrices involved in the definition of compatibility when $\varphi$ is the $\ell^{1}-$norm and the second, a proof that the operator $F_{1}$ of \eqref{E: median operator} cannot be rewritten as a Finsler infinity Laplacian.

\subsection*{Notation} We denote by $S^{d-1}$ the unit sphere in $\R^d$.  The number of points of  a finite set $A$ is $\sharp A$ and $\text{conv}(\{q_1,\ldots, q_N\})$ is the convex hull of $q_1,\ldots, q_N\in \R^d$. If $q \in \mathbb{R}^{d}$, we write $\langle q \rangle$ for its linear span, that is, \ $\langle q \rangle = \{\alpha q \, :\, \alpha \in \mathbb{R}\}$.
We write $\langle q, q' \rangle$ for the Euclidean inner product of two vectors $q,q' \in \mathbb{R}^{d}$, and  $\|\cdot\|$ is the  
Euclidean norm. Given a set $A \subseteq \mathbb{R}^{d}$, we denote by $A^{\perp}$ the set of vectors orthogonal to it, that is,
	\begin{equation*}
		A^{\perp} = \bigcap_{q' \in A} \{q \in \mathbb{R}^{d} \, :  \, \langle q, q' \rangle = 0\}.
	\end{equation*}

The standard orthonormal basis of $\mathbb{R}^{d}$ is denoted by $\{\bar{e}_{1},\bar{e}_{2},\dots,\bar{e}_{d}\}$.  We abbreviate the coordinates with respect to this basis by $p_{i} = \langle p, \bar{e}_{i} \rangle$.  Similarly, given $X \in \mathbb{R}^{d \times d}$, we write its matrix entries as $X_{ij} = \langle X \bar{e}_{j}, \bar{e}_{i} \rangle$.  
\vs

Given $V, U \subseteq \mathbb{R}^{d}$, we write $V \subset \subset U$ if the closure $\overline{V}$ of $V$ is a compact subset of $U$.
\vs

The space of symmetric matrices in $\mathbb{R}^{d \times d}$ is 
$\mathcal{S}^{d}$.  Given $X, Y \in \mathcal{S}^{d}$, we write $X \leq Y$ if $Y - X$ is positive semi-definite.
\vs
Given two vectors $q,q' \in \mathbb{R}^{d}$, the tensor product $q \otimes q'$ is the linear operator on $\mathbb{R}^{d}$ defined by 
	\begin{equation*}
		(q \otimes q') q'' = \langle q', q'' \rangle q.
	\end{equation*}

\section{Preliminaries}  

\subsection{Finsler norms} \label{S: Finsler norms}  We say that $\varphi : \mathbb{R}^{d} \to [0,\infty)$ is a Finsler norm if it is positively one-homogeneous, convex, and positive definite, that is, $\varphi$  satisfies the following three conditions:
	\begin{gather*}
		\varphi(\lambda q) = \lambda \varphi(q) \ \  \text{if} \ \ q \in \mathbb{R}^{d}, \ \ \lambda > 0,  \\
		\varphi(q_{1} + q_{2}) \leq \varphi(q_{1}) + \varphi(q_{2}) \ \  \text{if} \ \ q{_1}, q_{2} \in \mathbb{R}^{d}, \\
		\min \left\{ \frac{\varphi(q)}{\|q\|} \, : \, q \in \mathbb{R}^{d} \setminus \{0\} \right\} > 0.
	\end{gather*}	
We say that $\varphi$ is symmetric if $\varphi(-q) = \varphi(q)$ for all $q \in \mathbb{R}^{d}$, in which case it is a norm in the proper sense.
\vs	
Given a Finsler norm $\varphi$, we define its dual norm $\varphi^{*}$ by 
	\begin{equation*}
		\varphi^{*}(p) = \max \left\{ \frac{\langle p, q \rangle}{\varphi(q)} \, : \, q \in \mathbb{R}^{d} \setminus \{0\} \right\}.
	\end{equation*}
It is easy to check that $\varphi^{*}$ is also a Finsler norm.  Furthermore, $(\varphi^{*})^{*} = \varphi$.  
\vs

For each $q \in \mathbb{R}^{d}$, the subdifferential $\partial \varphi(q)$ of $\varphi$ at $q$ is defined by 
	\begin{equation*}
		\partial \varphi(q) = \left\{p \in \mathbb{R}^{d} \ :  \  \varphi(q') \geq \varphi(q) + \langle p, q' - q \rangle \ \text{for all} \  q' \in \mathbb{R}^{d}\right\}.
	\end{equation*}
We will frequently use the following representation of the subdifferential, which is specific to Finsler norms:
	\begin{equation} \label{E: subdifferential}
		\partial \varphi(q) = \left\{ \begin{array}{r l}
			\left\{ p \in \{\varphi^{*} = 1\} \, : \, \langle p, q \rangle = \varphi(q) \right\} & \text{if} \, \, q \in \mathbb{R}^{d} \setminus \{0\}, \\[1.5mm]
			\{\varphi^{*} = 1\}, & \text{if} \, \, q = 0.
		\end{array} \right.
	\end{equation}
Note that the same considerations apply to $\partial \varphi^{*}$.
\vs
A very useful fact about convex functions is they are almost $C^{1}$.  More precisely, the subdifferential $\partial \varphi$ is upper semi-continuous.  In fact, upper semi-continuity continues to hold if both the point and the norm are allowed to vary.  This is a classical fact. Since, however, we will need it later, we  state it carefully next.

\begin{prop} \label{P: upper semi-continuity}  Let $\varphi$ and $(\varphi_{n})_{n \in \mathbb{N}}$ be Finsler norms in $\mathbb{R}^{d}$ such that $\varphi_{n} \to \varphi$ locally uniformly as $n \to \infty$.  If $(q_{n})_{n \in \mathbb{N}}, (p_{n})_{n \in \mathbb{N}} \subseteq \mathbb{R}^{d}$ are sequences chosen such that $p_{n} \in \partial \varphi_{n}(q_{n})$ for each $n$ and if both sequences have limits $q = \underset{n \to \infty}\lim q_{n}$ and $p =\underset{n \to \infty} \lim p_{n}$, then $p \in \partial \varphi(q)$.  \end{prop}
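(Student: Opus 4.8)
\emph{Proof proposal.} The plan is to pass to the limit directly in the subdifferential inequality that defines $p_{n} \in \partial \varphi_{n}(q_{n})$, using that local uniform convergence is strong enough to control every quantity appearing along the sequences $(q_{n})_{n \in \mathbb{N}}$ and $(p_{n})_{n \in \mathbb{N}}$.

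Recall that $p_{n} \in \partial \varphi_{n}(q_{n})$ means precisely
\[
	\varphi_{n}(q') \geq \varphi_{n}(q_{n}) + \langle p_{n}, q' - q_{n} \rangle \quad \text{for every} \, \, q' \in \mathbb{R}^{d}.
\]
First I would fix an arbitrary $q' \in \mathbb{R}^{d}$ and let $n \to \infty$ in this inequality, term by term. The left-hand side converges to $\varphi(q')$ since $\varphi_{n} \to \varphi$ pointwise. The inner product $\langle p_{n}, q' - q_{n} \rangle$ converges to $\langle p, q' - q \rangle$ because $p_{n} \to p$ and $q_{n} \to q$. The only step that deserves a word of justification is $\varphi_{n}(q_{n}) \to \varphi(q)$: writing
\[
	|\varphi_{n}(q_{n}) - \varphi(q)| \leq |\varphi_{n}(q_{n}) - \varphi(q_{n})| + |\varphi(q_{n}) - \varphi(q)|,
\]
the first term tends to $0$ by uniform convergence of $\varphi_{n}$ to $\varphi$ on any fixed ball containing the convergent (hence bounded) sequence $(q_{n})_{n \in \mathbb{N}}$, while the second tends to $0$ by continuity of $\varphi$. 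Passing to the limit in the displayed inequality therefore gives
\[
	\varphi(q') \geq \varphi(q) + \langle p, q' - q \rangle \quad \text{for every} \, \, q' \in \mathbb{R}^{d},
\]
which is exactly the assertion that $p \in \partial \varphi(q)$.

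There is essentially no obstacle here: this is the standard closedness-of-the-graph argument for subdifferentials, and the only mild care needed is the elementary three-term estimate above, which accounts for the fact that both the base point and the ambient norm are allowed to vary simultaneously. One could instead argue via the representation \eqref{E: subdifferential}, passing to the limit in the relations $\varphi_{n}^{*}(p_{n}) = 1$ and $\langle p_{n}, q_{n} \rangle = \varphi_{n}(q_{n})$ after first checking that $\varphi_{n}^{*} \to \varphi^{*}$ locally uniformly (a consequence of the continuity of the duality operation with respect to local uniform convergence). That route, however, forces one to separate the cases $q \neq 0$ and $q = 0$ and is strictly less convenient than the direct approach above.
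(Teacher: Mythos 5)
Your proposal is correct and is essentially identical to the paper's proof: both pass to the limit in the defining subdifferential inequality $\varphi_{n}(q') \geq \varphi_{n}(q_{n}) + \langle p_{n}, q' - q_{n} \rangle$. The only difference is that you spell out the three-term estimate justifying $\varphi_{n}(q_{n}) \to \varphi(q)$, which the paper leaves implicit.
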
  

\begin{proof}  Given $q' \in \mathbb{R}^{d}$, we can write for any  $ n \in \mathbb{N}$, 
	\begin{equation*}
		\varphi_{n}(q') \geq \varphi_{n}(q_{n}) + \langle p_{n}, q' - q_{n} \rangle.
	\end{equation*}
Sending $n \to \infty$, this becomes
	\begin{equation*}
		\varphi(q') \geq \varphi(q) + \langle p, q' - q \rangle.
	\end{equation*}
Since $q'$ was arbitrary, this implies $p \in \partial \varphi(q)$ by definition. \end{proof}  

\subsection*{Terminology from Convex Analysis}  If $C \subseteq \mathbb{R}^{d}$ is a convex set, then there is a smallest linear subspace $V_{C} \subseteq \mathbb{R}^{d}$ such that
	\begin{equation*}
		C \subseteq q + V_{C} \ \  \text{if} \ \  q \in C.
	\end{equation*}
The relative interior of $C$, which we denote by $\text{ri}(C)$,  is, by definition, the interior of $C$ relative to the topology of $q + V_{C}$.
\vs

The boundary of $C$, denoted $\text{bdry} \, C$, is defined by $\text{bdry} \, C = \overline{C} \setminus \text{ri}(C)$, where $\overline{C}$ is the closure of $C$.
\vs 
The dimension of $C$ equals the dimension of the linear space $V_{C}$ defined above.
\vs
If $C \subseteq \mathbb{R}^{d}$ is convex, we say that a convex subset $F \subseteq C$ is a face of $C$ if, for each convex subset $F' \subseteq C$, 
	\begin{equation*}
		\text{if} \, \, \text{ri}(F') \cap F \neq \emptyset, \quad \text{then} \, \,  F' \subseteq F.  
	\end{equation*}
A point $x \in C$ is called an extreme point if $\{x\}$ is a face of $C$.

\subsection*{Polyhedral Finsler norms} \label{S: polyhedral norms} A Finsler norm $\varphi : \mathbb{R}^{d} \to [0,\infty)$ is called polyhedral if it is piecewise linear.  More precisely, $\varphi$ is polyhedral if there is a finite set of points $\{p_{1},\dots,p_{N}\} \subseteq \mathbb{R}^{d} \setminus \{0\}$ such that
	\begin{equation} \label{E: polyhedral norm}
		\varphi(q) = \max \left\{ \langle p_{1},q \rangle, \dots, \langle p_{N}, q \rangle \right\}.
	\end{equation}
There are a number of equivalent definitions.  Most notably, $\varphi$ is polyhedral if and only if the unit ball $\{\varphi \leq 1\}$ is a polytope.  
\vs
It is a classical fact the set of polyhedral Finsler norms is invariant under the dual operation. This is the topic of the next proposition. Its proof can be found in  Theorem 19.2 of Rockafellar \cite{rockafellar} or Section 2.4 of Schneider \cite{schneider}.

\begin{prop}\label{P: polyhedral norms duality} If $\varphi$ is a polyhedral Finsler norm, then $\varphi^{*}$ is also polyhedral.  \end{prop}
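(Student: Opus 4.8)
The plan is to reduce the statement to the classical fact that the polar dual of a polytope containing the origin in its interior is again a polytope. As recalled in the text, a Finsler norm $\varphi$ is polyhedral exactly when its unit ball $B := \{\varphi \le 1\}$ is a polytope; and since $\varphi$ is positively one-homogeneous, convex and positive definite, $B$ is a convex body with $0 \in \mathrm{int}(B)$. So it suffices to prove that $\{\varphi^{*} \le 1\}$ is a polytope whenever $B$ is.

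First I would identify $\{\varphi^{*} \le 1\}$ with the polar set $B^{\circ} := \{p \in \mathbb{R}^{d} : \langle p, q \rangle \le 1 \text{ for all } q \in B\}$. Using positive one-homogeneity of $\varphi$, one has
\[
	\varphi^{*}(p) = \max \left\{ \frac{\langle p, q \rangle}{\varphi(q)} \ : \ q \in \mathbb{R}^{d} \setminus \{0\} \right\} = \max \left\{ \langle p, q \rangle \ : \ q \in B \right\},
\]
the second equality because the ratio is positively $0$-homogeneous in $q$ (so the first maximum is attained over $\{\varphi = 1\} = \text{bdry}\, B$) and because a linear functional attains its maximum over the convex body $B$ on $\text{bdry}\, B$. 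Hence $\varphi^{*}(p) \le 1$ if and only if $p \in B^{\circ}$. Since $0 \in \mathrm{int}(B)$, there is $r > 0$ with $\{\|q\| \le r\} \subseteq B$, and then $\varphi^{*}(p) \ge r\|p\|$, so $B^{\circ}$ is bounded. Finally, writing $B = \text{conv}(\{q_{1}, \dots, q_{N}\})$ with $q_{1}, \dots, q_{N}$ its (finitely many) extreme points, linearity of $\langle p, \cdot \rangle$ forces the maximum above to be attained at some $q_{i}$, so
\[
	B^{\circ} = \left\{ p \in \mathbb{R}^{d} \ : \ \langle p, q_{i} \rangle \le 1, \ i = 1, \dots, N \right\},
\]
an intersection of finitely many closed half-spaces. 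A bounded polyhedron is a polytope, so $\{\varphi^{*} \le 1\} = B^{\circ}$ is a polytope and $\varphi^{*}$ is polyhedral, as claimed.

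I do not anticipate any genuine obstacle, since this is entirely classical convex geometry; the only points needing care are the translation between the ``maximum of linear functionals'' description of a polyhedral norm and the polytope description of its unit ball, and the use of the positive definiteness of $\varphi$ to guarantee $0 \in \mathrm{int}(B)$, which is precisely what makes the polar $B^{\circ}$ bounded and thereby closes the argument. Alternatively, one can argue straight from \eqref{E: polyhedral norm}: there $\varphi$ is the support function of $P := \text{conv}(\{p_{1}, \dots, p_{N}\})$, so $\{\varphi \le 1\} = P^{\circ}$, and then the identity $\{\varphi^{*} \le 1\} = \{\varphi \le 1\}^{\circ}$ established above, combined with the bipolar theorem, yields $\{\varphi^{*} \le 1\} = (P^{\circ})^{\circ} = P$, which is visibly a polytope. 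Either way, the substance is the correspondence between vertex and halfspace representations, which is exactly the content of the cited results of Rockafellar and Schneider.
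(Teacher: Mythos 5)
Your proof is correct; it is the standard polar-duality argument for polytopes. The paper does not prove this proposition at all but simply refers to Rockafellar (Theorem 19.2) and Schneider (Section 2.4), and your argument is precisely a self-contained rendering of the classical fact those references establish, so there is nothing to compare beyond noting that you have filled in what the paper delegates to the literature.
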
  

The next proposition is about the fact that the points $\{p_{1},\dots,p_{N}\}$ in \eqref{E: polyhedral norm} can be canonically chosen. 

\begin{prop} \label{P: canonical choice} Let $\varphi$ be a polyhedral Finsler norm in $\mathbb{R}^{d}$.  If $E_{\varphi} \subseteq \{\varphi^{*} = 1\}$ is the set of extreme points of the dual ball $\{\varphi^{*} \leq 1\}$, then $\# E_{\varphi} < \infty$ and 
	\begin{equation*}
		\varphi(q) = \max \left\{ \langle p, q \rangle \, :  \, p \in E_{\varphi} \right\}.
	\end{equation*}      
\end{prop}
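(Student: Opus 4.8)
The plan is to leverage the duality for polyhedral norms from Proposition~\ref{P: polyhedral norms duality} together with the elementary fact that a linear functional on a compact polytope attains its maximum at a vertex.

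First I would record the finiteness claim. By Proposition~\ref{P: polyhedral norms duality}, $\varphi^{*}$ is again a polyhedral Finsler norm, so its unit ball $K := \{\varphi^{*} \leq 1\}$ is a polytope. Since $\varphi^{*}$ is positive definite it is bounded below by a multiple of $\|\cdot\|$, so $K$ is bounded; and since $\varphi^{*}$ is continuous, $K$ is closed. Hence $K$ is a compact convex polytope, and it is classical (e.g.\ \cite{rockafellar} or \cite{schneider}) that such a set has finitely many extreme points and equals their convex hull. Therefore $\# E_{\varphi} < \infty$ and $K = \mathrm{conv}(E_{\varphi})$. (By one-homogeneity of $\varphi^{*}$, the boundary of $K$ is exactly $\{\varphi^{*} = 1\}$, so indeed $E_{\varphi} \subseteq \{\varphi^{*} = 1\}$, consistent with the statement.)

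Next I would recall that $(\varphi^{*})^{*} = \varphi$ and unwind the definition of the dual norm: for $q \neq 0$,
\[
	\varphi(q) = \max\left\{ \frac{\langle p, q \rangle}{\varphi^{*}(p)} \, : \, p \in \mathbb{R}^{d}\setminus\{0\} \right\} = \max\{\langle p, q \rangle \, : \, \varphi^{*}(p) = 1\} = \max\{\langle p, q \rangle \, : \, p \in K\},
\]
where the middle equality follows by normalizing, and the last one because $0 \in K$ and $\langle \cdot, q \rangle$ is a nonzero linear functional, so its maximum over the compact convex set $K$ is attained on the boundary $\{\varphi^{*} = 1\}$; for $q = 0$ all three expressions vanish. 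Finally, writing an arbitrary $p \in K = \mathrm{conv}(E_{\varphi})$ as a convex combination $p = \sum_{i} \lambda_{i} p_{i}$ with $p_{i} \in E_{\varphi}$ gives $\langle p, q \rangle = \sum_{i} \lambda_{i} \langle p_{i}, q \rangle \leq \max\{\langle p', q \rangle : p' \in E_{\varphi}\}$, and combined with $E_{\varphi} \subseteq K$ this yields $\max\{\langle p, q \rangle : p \in K\} = \max\{\langle p, q \rangle : p \in E_{\varphi}\}$, completing the proof.

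There is no real obstacle here: the argument is entirely standard convex analysis. The only points that merit a line of care are the passage from the maximum over $K$ to its boundary (dispatched by treating $q=0$ separately) and the invocation of the theorem that a compact polytope is the convex hull of its finitely many vertices, both of which are textbook facts already cited in the paper.
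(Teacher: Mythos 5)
Your proof is correct, and the argument holds together; it just takes a slightly different route to the same place. The paper's proof never invokes Proposition~\ref{P: polyhedral norms duality}: it starts from a concrete representation $\varphi(q) = \max_{i}\langle p_{i}, q\rangle$ as in \eqref{E: polyhedral norm}, observes directly that $\{\varphi^{*}\leq 1\} = \mathrm{conv}(\{p_{1},\dots,p_{N}\})$, deduces $E_{\varphi}\subseteq\{p_{1},\dots,p_{N}\}$ (hence finiteness), and then sandwiches the two maxima. You instead treat $\{\varphi^{*}\leq 1\}$ abstractly as a compact polytope via Proposition~\ref{P: polyhedral norms duality}, pull in the biduality $(\varphi^{*})^{*}=\varphi$ to write $\varphi(q)=\max\{\langle p,q\rangle : p\in K\}$, and finish by passing the maximum to extreme points. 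Your version leans on the (heavier) duality proposition but avoids having to identify the dual ball explicitly from the representation; the paper's version is more self-contained in that it stays with the chosen generators $p_{1},\dots,p_{N}$ throughout. Both are textbook convex analysis, and both correctly cite the fact that a compact convex set is the convex hull of its extreme points. One small point worth noting: in the paper's argument, the containment $E_{\varphi}\subseteq\{p_{1},\dots,p_{N}\}$ gives finiteness immediately without needing the general ``polytope has finitely many vertices'' statement, whereas you need that general fact; this is fine but is a slightly stronger appeal.
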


	\begin{proof}  Choose $\{p_{1},\dots,p_{N}\} \subseteq \mathbb{R}^{d} \setminus \{0\}$ so that $\varphi$ is given by \eqref{E: polyhedral norm}.  First, notice that \eqref{E: polyhedral norm} implies that $\{\varphi^{*} \leq 1\} = \text{conv}(\{p_{1},\dots,p_{N}\})$.  It is easy to verify that the set of extreme points of $\text{conv}(\{p_{1},\dots,p_{N}\})$ is a subset of $\{p_{1},\dots,p_{N}\}$.  In particular, $E_{\varphi} \subseteq \{p_{1},\dots,p_{N}\}$. Hence $\#E_{\varphi} \leq N < \infty$.
\vs 	
It is not hard to check directly that $\text{conv}(\{p_{1},\dots,p_{N}\})$ equals the convex hull of its extreme points.  This is true, in fact, for any compact convex set in $\mathbb{R}^{d}$, see \cite[Theorem 18.5]{rockafellar} or \cite[Theorem 1.4.3]{schneider}.  Thus, 
	\begin{equation*}
		\varphi(p) = \max \left\{ \langle p_{1},q \rangle, \dots, \langle p_{N}, q \rangle \right\} \leq \max \left\{ \langle p, q \rangle \, :  \, p \in E_{\varphi} \right\}.
	\end{equation*}
At the same time, since $E_{\varphi} \subseteq \{p_{1},\dots,p_{N}\}$, 
	\begin{equation*}
		\max \left\{ \langle p, q \rangle \, :  \, p \in E_{\varphi} \right\} \leq  \max \left\{ \langle p_{1},q \rangle, \dots, \langle p_{N}, q \rangle \right\} = \varphi(p).
	\end{equation*}
\end{proof}    
We close the general discussion about Finsler norms  with some examples.  Let $\varphi$ be the $\ell^{1}-$norm in $\mathbb{R}^{d}$, that is,
	\begin{equation*}
		\varphi(q) = \sum_{i = 1}^{d} |q_{i}|,
	\end{equation*}
which is polyhedral since we can write it in the form
	\begin{equation*}
		\varphi(q) = \max \Big\{ \sum_{i = 1}^{d} \rho(i) \langle q, e_{i} \rangle \, : \, \rho \in \{-1,1\}^{d} \Big\}.
	\end{equation*}
Its dual is readily seen to be the $\ell^{\infty}-$norm
	\begin{equation*}
		\varphi^{*}(p) = \max\left\{ |p_{1}|,\dots,|p_{d}| \right\}.
	\end{equation*}
	
Retaining the notation $\varphi$ for the $\ell^{1}-$norm, a less common example, which turns out to be of interest, is the norm $\underline{\varphi}$ defined by 
	\begin{equation*}
		\underline{\varphi}(q) = \max \{\varphi(q)\} \cup \left\{(d - 1) |q_{1}|,\dots, (d-1) |q_{d}| \right\}.
	\end{equation*}
The dual norm $\underline{\varphi}^{*}$ is given by 
	\begin{equation} \label{E: rhombic dodecahedron}
		\underline{\varphi}^{*}(p) = \max \Big\{ \dfrac{1}{d-1} \sum_{i \in \{1,2,\dots,d\} \setminus \{j\}} |p_{i}| \, : \, j \in \{1,2,\dots,d\} \Big\}.
	\end{equation}
When $d = 3$, the dual unit ball $\{\underline{\varphi}^{*} \leq 1\}$ is a rhombic dodecahedron.  
\vs
The norm $\underline{\varphi}$ appears in Section \ref{S: p laplace} in the parameter regime $\alpha < 2$.  In that context, the transformation $\varphi \mapsto \underline{\varphi}$ seems to be important.  It is not clear to us how to interpret the geometric meaning of this map or how it relates to the underlying discrete schemes, though.  
	
\subsection*{A fundamental matrix lemma}  We will need the following result, which is already used in \cite{ishii}.

	\begin{lemma} \label{L: matrix lemma} Given any linear subspace $V \subseteq \mathbb{R}^{d}$, if $0 \leq A \in \mathcal{S}_{V}$ and $X \in \mathcal{S}^{d}$ satisfies $-c A \leq X \leq cA$ for some $c > 0$, then $X \in \mathcal{S}_{V}$. \end{lemma}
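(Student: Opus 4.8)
The plan is to reduce membership in $\mathcal{S}_{V}$ to a condition on $\ker X$, and then to invoke the elementary fact that a positive semidefinite matrix annihilates every vector on which its quadratic form vanishes.

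First I would record the reformulation: for a \emph{symmetric} matrix $X$, one has $X \in \mathcal{S}_{V}$ if and only if $Xu = 0$ for every $u \in V^{\perp}$. The forward implication is immediate, since $\pi_{V}u = 0$ gives $Xu = \pi_{V}X\pi_{V}u = 0$. For the converse, if $X$ kills $V^{\perp}$ then for arbitrary $w \in \mathbb{R}^{d}$ and $u \in V^{\perp}$ we have $\langle Xw, u \rangle = \langle w, Xu \rangle = 0$, so $Xw \in (V^{\perp})^{\perp} = V$; combining this with $Xw = X\pi_{V}w$ (which follows from $X$ killing $V^{\perp}$) yields $\pi_{V}X\pi_{V}w = X\pi_{V}w = Xw$. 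Granting this equivalence, it suffices to prove that $Xu = 0$ for every $u \in V^{\perp}$.

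So fix $u \in V^{\perp}$. Applying the same equivalence to $A$ (which is symmetric and lies in $\mathcal{S}_{V}$) gives $Au = 0$, hence $\langle Au, u \rangle = 0$. The hypothesis $-cA \leq X \leq cA$ then squeezes $\langle Xu, u \rangle$ between $-c\langle Au, u\rangle = 0$ and $c\langle Au, u \rangle = 0$, so $\langle Xu, u \rangle = 0$. Now set $B := X + cA$; by hypothesis $B \geq 0$, and $\langle Bu, u \rangle = \langle Xu, u\rangle + c\langle Au, u\rangle = 0$.

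The crux — really the only step that is not bookkeeping — is the claim that $B \geq 0$ together with $\langle Bu, u \rangle = 0$ forces $Bu = 0$; this follows by writing $B = B^{1/2}B^{1/2}$ with $B^{1/2} \geq 0$ symmetric, so that $\|B^{1/2}u\|^{2} = \langle Bu, u \rangle = 0$ gives $B^{1/2}u = 0$ and hence $Bu = 0$. Therefore $Xu = Bu - cAu = 0 - 0 = 0$, and since $u \in V^{\perp}$ was arbitrary, the first step yields $X \in \mathcal{S}_{V}$. I do not anticipate any genuine obstacle here: the whole argument is short once the $\ker$-reformulation of $\mathcal{S}_{V}$ is in hand, and it only uses the squeeze inequality and the standard positive-semidefinite fact quoted above.
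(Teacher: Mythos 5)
Your proof is correct, and it takes a genuinely different route from the paper's. Both arguments begin by exploiting symmetry to reduce $X \in \mathcal{S}_V$ to a kernel/range condition: the paper shows $\text{range}(X) \subseteq V$, while you show $V^{\perp} \subseteq \ker X$ — equivalent statements for symmetric $X$. The core difference is in how that reduced claim is established. The paper fixes an arbitrary $u \in \mathbb{R}^d$ and $w \in V^{\perp}$, applies the polarization identity to $4\langle Xu, w\rangle$, then uses the two-sided matrix bound together with $Aw = 0$ to obtain $4\langle Xu, w\rangle \leq 2c\langle Au, u\rangle$ uniformly in $w$; since a linear functional bounded above on a linear subspace must vanish there, $Xu \perp V^{\perp}$, so $Xu \in V$. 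Your proof instead fixes $u \in V^{\perp}$ directly, squeezes to get $\langle Xu, u\rangle = 0$, and then runs the standard positive-semidefinite square-root argument on $B = X + cA$ to conclude $Bu = 0$, hence $Xu = Bu - cAu = 0$. Both are elementary and correct; yours factors out the PSD fact as a clean black box and avoids the polarization/bounded-functional manipulation, while the paper's version packages the same underlying inequality into a single computation without introducing $B$. Neither is more general, but they are distinct presentations and your reformulation step (the equivalence $X \in \mathcal{S}_V \Leftrightarrow V^{\perp} \subseteq \ker X$ for symmetric $X$) is worth spelling out, as you did, since the paper's phrasing ``it suffices to prove the range of $X$ is contained in $V$'' leaves that reduction implicit.
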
  
	
		\begin{proof}  Since the matrices in question are all symmetric, it suffices to prove that the range of $X$ is contained in $V$.
		\vs
		Fix $u \in \mathbb{R}^{d}$.  Given any $w \in V^{\perp}$, the parallelogram identity gives
			\begin{align*}
				4 \langle Xu, w \rangle &= \langle X(u + w), u + w \rangle - \langle X(u - w), u - w \rangle \\
				&\leq c \langle A (u + w), u + w \rangle - (-c\langle A(u - w), u - w \rangle) \\
				&= 2 c \langle Au, u \rangle.
			\end{align*}
		Thus, the linear functional $w \mapsto \langle Xu, w \rangle$ is bounded above on $V^{\perp}$.  It follows easily that $Xu \in (V^{\perp})^{\perp} = V$.       \end{proof}  
		
\subsection*{The Finsler infinity Laplacian} \label{S: infinity laplace}  Throughout the paper, if $\varphi$ is any Finsler norm, its infinity Laplacian is the multi-valued operator
	\begin{equation*}
		\langle D^{2} u \cdot \partial \varphi^{*}(Du), \partial \varphi^{*}(Du) \rangle = \left\{\langle D^{2}u \cdot q, q \rangle \, : \, q \in \partial \varphi^{*}(p) \right\}.
	\end{equation*}
For a discussion of the role of these nonlinearities in the theory of $L^{\infty}-$variational problems, see Section 5 of the survey by Aronsson, Crandall, and Juutinen \cite{aronsson_crandall_juutinen} or the paper by Armstrong, Crandall, Julin, and Smart \cite{armstrong_crandall_julin_smart}.  Here we use viscosity solutions tools to study these operators.
\vs
Accordingly, define a pair of operators $(\overline{G}_{\varphi},\underline{G}^{\varphi})$ by 
	\begin{equation} \label{E: infinity laplace}
		\left\{ \begin{array}{c} 
			\overline{G}_{\varphi}(p,X) = \max \left\{ \langle Xq, q \rangle \, :  \, q \in \partial \varphi^{*}(p) \right\}, \\[1.5mm]
			\underline{G}^{\varphi}(p,X) = \min \left\{ \langle Xq, q \rangle \, :  \, q \in \partial \varphi^{*}(p) \right\}.
		\end{array} \right.
	\end{equation}
The next  proposition asserts that the pair $(\overline{G}_{\varphi},\underline{G}^{\varphi})$ satisfies our main assumptions.

\begin{prop} \label{P: infinity laplace}  Given any Finsler norm $\varphi$ in $\mathbb{R}^{d}$, the pair $(\overline{G}_{\varphi},\underline{G}^{\varphi})$ satisfies \eqref{A: elliptic_1}, \eqref{A: elliptic_2}, and \eqref{A: semicontinuous} and it is compatible with $\varphi$.  \end{prop}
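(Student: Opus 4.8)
The plan is to verify the four properties in turn, with most of the work concentrated on the compatibility identity \eqref{A: consistency}. For ellipticity \eqref{A: elliptic_1}–\eqref{A: elliptic_2}: if $Y \geq 0$ then $\langle (X+Y)q,q\rangle \geq \langle Xq,q\rangle$ for every $q$, so taking the max (resp.\ min) over $q \in \partial\varphi^*(p)$ gives $\overline{G}_{\varphi}(p,X+Y) \geq \overline{G}_{\varphi}(p,X)$ and likewise for $\underline{G}^{\varphi}$. For semicontinuity \eqref{A: semicontinuous}, first note that $\partial\varphi^*(p)$ is a nonempty compact convex set for each $p$, and, by Proposition \ref{P: upper semi-continuity} applied with the constant sequence $\varphi_n \equiv \varphi^*$, the set-valued map $p \mapsto \partial\varphi^*(p)$ is upper semi-continuous with compact values (it is also locally bounded since $\partial\varphi^*(p) \subseteq \{\varphi = 1\} \cup \{0\}$ is contained in a fixed ball). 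A standard fact about maxima/minima of continuous functions over an upper semi-continuous compact-valued correspondence then yields that $(p,X) \mapsto \overline{G}_{\varphi}(p,X) = \max\{\langle Xq,q\rangle : q \in \partial\varphi^*(p)\}$ is upper semi-continuous and $(p,X) \mapsto \underline{G}^{\varphi}(p,X)$ is lower semi-continuous; I would spell this out by taking $(p_n,X_n) \to (p,X)$, choosing maximizers $q_n \in \partial\varphi^*(p_n)$, passing to a convergent subsequence $q_n \to q$, and using upper semi-continuity of the subdifferential to get $q \in \partial\varphi^*(p)$, so $\limsup_n \overline{G}_{\varphi}(p_n,X_n) = \limsup_n \langle X_n q_n, q_n\rangle = \langle Xq,q\rangle \leq \overline{G}_{\varphi}(p,X)$.

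The heart of the matter is compatibility: I must show that if $p \in \mathbb{R}^{d}$ and $X \in \mathcal{S}(p,\varphi) = \mathcal{S}_{\langle p\rangle \oplus \mathcal{T}(p,\varphi)}$, then $\overline{G}_{\varphi}(p,X) = \underline{G}^{\varphi}(p,X)$, i.e.\ the quantity $\langle Xq,q\rangle$ is the same for every $q \in \partial\varphi^*(p)$. The key geometric observation is that the subdifferential $\partial\varphi^*(p)$ is a convex subset of an affine subspace parallel to $\mathcal{T}(p,\varphi)^{\perp\perp}$; more precisely, writing $C = \partial\varphi^*(p)$, the smallest linear subspace $V_C$ with $C \subseteq q_0 + V_C$ for $q_0 \in C$ satisfies $V_C \subseteq \langle p \rangle \oplus \mathcal{T}(p,\varphi)$. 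Indeed, by definition $\mathcal{T}(p,\varphi) = \partial\varphi^*(p)^{\perp}$, so $\partial\varphi^*(p) \subseteq \mathcal{T}(p,\varphi)^{\perp}$, and any two elements $q, q' \in \partial\varphi^*(p)$ differ by a vector $q - q' \in \mathcal{T}(p,\varphi)^{\perp} \ominus \mathcal{T}(p,\varphi)^{\perp}$; one checks that $\mathcal{T}(p,\varphi)^{\perp} = \mathrm{span}(\partial\varphi^*(p))$ lies in $\langle p\rangle \oplus \mathcal{T}(p,\varphi)$ — in fact for $p \neq 0$ the relation $\langle p, q\rangle = \varphi^*(p)$ for $q \in \partial\varphi^*(p)$ (from \eqref{E: subdifferential}) means all of $\partial\varphi^*(p)$ sits on one affine hyperplane with normal $p$, so differences $q - q'$ are orthogonal to $p$, hence $q - q' \in \mathcal{T}(p,\varphi)$ directly. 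Thus, given $q, q' \in \partial\varphi^*(p)$, the difference vector $w := q - q'$ lies in $\mathcal{T}(p,\varphi)$, while each of $q, q'$ lies in $\langle p \rangle \oplus \mathcal{T}(p,\varphi)$ when $p \neq 0$ (and when $p = 0$ one has $\mathcal{T}(0,\varphi) = \partial\varphi^*(0)^{\perp} = \{0\}$ since $\partial\varphi^*(0) = \{\varphi = 1\}$ spans $\mathbb{R}^{d}$, so $\mathcal{S}(0,\varphi) = \{0\}$ and the only matrix to check is $X = 0$, which is trivial). Now since $X \in \mathcal{S}_{\langle p\rangle \oplus \mathcal{T}(p,\varphi)}$ we have $Xq, Xq' \in \langle p \rangle \oplus \mathcal{T}(p,\varphi)$ and $X$ acts as zero off that subspace; writing $q = q' + w$ I expand $\langle Xq, q\rangle - \langle Xq', q'\rangle = 2\langle Xq', w\rangle + \langle Xw, w\rangle$ and must argue this vanishes. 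This is where I expect the main obstacle: one needs to show $\langle Xw, w\rangle = 0$ and $\langle Xq', w\rangle = 0$ for all $w \in \partial\varphi^*(p) - \partial\varphi^*(p) \subseteq \mathcal{T}(p,\varphi)$, which is not automatic from $X \in \mathcal{S}(p,\varphi)$ alone.

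To resolve this I would reconsider the structure: the correct statement should be that $\langle Xq, q\rangle$ depends only on the \emph{common component} of the $q$'s. The right approach is probably to decompose $q = q_p + q_\tau$ where $q_p \in \langle p\rangle$ and $q_\tau \in \mathcal{T}(p,\varphi)$; for $p \neq 0$ all $q \in \partial\varphi^*(p)$ share the same $\langle p\rangle$-component (because $\langle p, q\rangle = \varphi^*(p)$ pins it down), call it $q_p^{0}$, and they differ only in their $\mathcal{T}(p,\varphi)$-components. So I actually need $\langle Xq, q\rangle$ to be constant as the $\mathcal{T}(p,\varphi)$-component ranges over the convex set $\{q_\tau : q_p^0 + q_\tau \in \partial\varphi^*(p)\}$. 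This forces me to use more than just $X \in \mathcal{S}(p,\varphi)$ — and indeed I suspect the intended argument uses that the map $q_\tau \mapsto \langle X(q_p^0 + q_\tau), q_p^0 + q_\tau\rangle$ is a quadratic polynomial that, being a viscosity-type consistency requirement, must in fact be affine on $\partial\varphi^*(p)$; alternatively, the cleanest route is to observe that $\partial\varphi^*(p)$, being a face-like piece of $\{\varphi^* \le 1\}$'s boundary, is such that $0 \in \partial\varphi^*(p)$ is impossible for $p \ne 0$ and the relative interior lies in a hyperplane, but for the max/min to disagree we would need $\langle Xq,q\rangle$ nonconstant on a segment, i.e.\ a genuinely quadratic profile, which contradicts $X$ annihilating directions transverse to $\langle p\rangle \oplus \mathcal{T}(p,\varphi)$ only if $\partial \varphi^*(p)$ is a single point or a segment through a special configuration. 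I would therefore structure the final proof by first handling the case where $\partial\varphi^*(p)$ is a singleton (then $\overline{G}_{\varphi} = \underline{G}^{\varphi}$ trivially, and this is exactly the set of $p$ where $\varphi^* \in C^1$), and then arguing that whenever $\partial\varphi^*(p)$ is higher-dimensional, the constraint $X \in \mathcal{S}(p,\varphi)$ combined with $\mathcal{T}(p,\varphi) = \partial\varphi^*(p)^\perp$ forces $X$ to vanish on $\mathcal{T}(p,\varphi)$ in the relevant sense, making $\langle Xq,q\rangle = \langle X q_p^0, q_p^0\rangle$ independent of $q$. The precise bookkeeping of these subspace inclusions — reconciling $\partial\varphi^*(p)^\perp$, its span, and $\langle p\rangle \oplus \mathcal{T}(p,\varphi)$ — is the delicate point and the step I would write out most carefully.
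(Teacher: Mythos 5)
Your proofs of ellipticity and semicontinuity are correct and match the paper's (one-line and cite-the-upper-semicontinuity-of-the-subdifferential, respectively).

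The compatibility argument, however, contains a genuine error that then derails the rest of the proposal. You claim that for $q,q'\in\partial\varphi^{*}(p)$, the fact that $q-q'\perp p$ gives $q-q'\in\mathcal{T}(p,\varphi)$. This is false, and in fact is impossible whenever $\partial\varphi^{*}(p)$ is not a singleton. Recall $\mathcal{T}(p,\varphi)=\partial\varphi^{*}(p)^{\perp}$, so $\partial\varphi^{*}(p)\subseteq\mathcal{T}(p,\varphi)^{\perp}$, hence $q-q'\in\mathcal{T}(p,\varphi)^{\perp}$. If also $q-q'\in\mathcal{T}(p,\varphi)$, then $q-q'\in\mathcal{T}(p,\varphi)\cap\mathcal{T}(p,\varphi)^{\perp}=\{0\}$. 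Once you notice the claim is suspect, you spend the second paragraph casting about for a repair (quadratic-versus-affine behavior on $\partial\varphi^{*}(p)$, ``the relevant sense,'' etc.), and you never land on a correct argument.

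The fix, however, is one line away: what you have actually shown is that $q-q'$ is orthogonal both to $p$ and (since $q,q'\in\partial\varphi^{*}(p)\subseteq\mathcal{T}(p,\varphi)^{\perp}$) to every vector in $\mathcal{T}(p,\varphi)$, i.e.\ $q-q'\in\bigl(\langle p\rangle\oplus\mathcal{T}(p,\varphi)\bigr)^{\perp}$. Writing $V=\langle p\rangle\oplus\mathcal{T}(p,\varphi)$, the hypothesis $X\in\mathcal{S}(p,\varphi)=\mathcal{S}_{V}$ means $X=\pi_{V}X\pi_{V}$, hence $X(q-q')=0$, and the polarization identity $\langle Xq,q\rangle-\langle Xq',q'\rangle=\langle X(q-q'),\,q+q'\rangle=0$ closes the argument. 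With this correction your route is valid and is genuinely different from the paper's: you compare arbitrary $q,q'\in\partial\varphi^{*}(p)$ directly, whereas the paper decomposes $X$ over a spanning set of elementary tensors $p\otimes p$, $p\otimes v+v\otimes p$, and $v\otimes v'+v'\otimes v$ with $v,v'\in\mathcal{T}(p,\varphi)$, and checks constancy of the quadratic form for each. Your (corrected) version is slightly shorter; the paper's has the small advantage of making the role of each type of matrix in $\mathcal{S}(p,\varphi)$ explicit.
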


	\begin{proof}  It is clear that the definitions imply  \eqref{A: elliptic_1} and \eqref{A: elliptic_2} while  \eqref{A: semicontinuous}  is a consequence of the upper semi-continuity of the subdifferential $\partial \varphi^{*}$ (see Proposition \ref{P: upper semi-continuity}).
\vs	
	To prove compatibility, that is,  \eqref{A: consistency}, suppose that $p \in \mathbb{R}^{d}$ and $X \in \mathcal{S}(p,\varphi)$.  If $p = 0$, then $\mathcal{S}(p,\varphi) = \{0\}$ and it is clear that $\overline{G}_{\varphi}(0,0) = 0 = \underline{G}^{\varphi}(0,0)$.  Therefore, we can assume $p \neq 0$.  
\vs	
	To show that $\overline{G}_{\varphi}(p,X) = \underline{G}^{\varphi}(p,X)$, notice that it suffices to prove that the quadratic form $v \mapsto \langle Xv, v \rangle$ is constant in $\partial \varphi^{*}(p)$.  Here it is easiest to appeal to linearity.  Observe that $\mathcal{S}(p,\varphi)$ is spanned by the following set of elementary tensors
		\begin{equation*}
			\{p \otimes p\} \cup \{v \otimes p + p \otimes v \, :  \, v \in \mathcal{T}(p,\varphi)\} \cup \{v \otimes v' + v' \otimes v \, :  \, v,v' \in \mathcal{T}(p,\varphi)\}.
		\end{equation*}
	Therefore, by linearity, we only need to check that $v \mapsto \langle Xv, v \rangle$ is constant in $\partial \varphi^{*}(p)$ when $X$ is equal to one of these tensors.
\vs	
	In case $X = p \otimes p$, the representation formula \eqref{E: subdifferential}  implies that
		\begin{equation*}
			\langle p,q \rangle = \varphi^{*}(p)\ \ \text{if} \ \ q \in \partial \varphi^{*}(p),
		\end{equation*}
and,	thus, $\overline{G}_{\varphi}(p,X) = \underline{G}^{\varphi}(p,X) = \varphi^{*}(p)^{2}$.
\vs	
	When $X = p \otimes v + v \otimes p$ for some $v \in \mathcal{T}(p,\varphi)$, we know that $\partial \varphi^{*}(p) \subseteq \mathcal{T}(p,\varphi)^{\perp}$ by the definition of $\mathcal{T}(p,\varphi)$.  Therefore,
		\begin{equation*}
			\langle v, q \rangle = 0 \ \  \text{if} \ \ q \in \partial \varphi^{*}(p),
		\end{equation*}
	From this, we deduce that
		\begin{equation*}
			\langle (p \otimes v + v \otimes p) e, e \rangle = 2\langle p,e \rangle \langle v,e \rangle = 0 \quad \text{if} \, \, q \in \partial \varphi^{*}(p),
		\end{equation*}
and,	hence $\overline{G}_{\varphi}(p,X) = \underline{G}^{\varphi}(p,X) = 0$.
\vs	
	Finally, if $X = v \otimes v' + v' \otimes v$ for some $v,v' \in \mathcal{T}(p,\varphi)$, then the arguments of the previous paragraph again yield $\overline{G}_{\varphi}(p,X) = \underline{G}^{\varphi}(p,X) = 0$.    \end{proof}  

While at a technical level, we will use the pair $(\overline{G}_{\varphi},\underline{G}^{\varphi})$ to describe differential inequalities involving the $\varphi$-infinity Laplacian, when there is no risk of confusion, we will write  
	\be\label{E: phi infinity harmonic} 
	\begin{split}
		&- \langle D^{2}u \cdot \partial \varphi^{*}(Du), \partial \varphi^{*}(Du) \rangle = 0 \ \ \text{in} \ \ \Omega,  \\[1.2mm]
		&- \langle D^{2} w \cdot \partial \varphi^{*}(Dw), \partial \varphi^{*}(Dw) \rangle \leq 0 \ \ \text{in} \ \  \Omega,\\[1.2mm]  &- \langle D^{2}v \cdot \partial \varphi^{*}(Dv), \partial \varphi^{*}(Dv) \rangle \geq 0 \ \  \text{in} \ \ \Omega, 
	\end{split}
	\ee
in place of the expressions 
	\begin{gather*}
		-\overline{G}_{\varphi}(Du,D^{2}u) \leq 0 \ \ \text{and} \ \ -\underline{G}^{\varphi}(Du,D^{2}u) \geq 0 \ \ \text{in} \ \ \Omega, \\[1.2mm] 
		-\overline{G}_{\varphi}(Dw,D^{2}w) \leq 0 \ \ \text{and} \ \ - \underline{G}^{\varphi}(Dv,D^{2}v) \geq 0 \ \ \text{in} \ \ \Omega,
	\end{gather*}
with inequalities and equalities to be understood in the viscosity sense.
\vs
As a shorthand, we will say that $u$ is $\varphi$-infinity harmonic in $\Omega$ when \eqref{E: phi infinity harmonic} holds.

\subsection*{Median} \label{S: median}  Given a finite set $A = \{a_{0},\dots,a_{N}\}$ of real numbers $a_{0} \leq a_{1} \leq \dots \leq a_{N}$, we define the median $\text{Med}(A)$ by 
	\begin{equation*}
		\text{Med}(A) = \left\{ \begin{array}{r l}
								a_{N/2}, & \text{if} \, \, N \in 2 \mathbb{Z}, \\
								\frac{1}{2} \left(a_{(N-1)/2} + a_{(N+1)/2} \right), & \text{otherwise.}
						\end{array} \right.
	\end{equation*}
In connection with the schemes of Section \ref{S: p laplace}, it is worth noting that $\text{Med}(A)$ is a minimizer of the problem
	\begin{equation*}
		\min \Big\{ \sum_{a \in A} |y - a| \, :  \, y \in \mathbb{R} \Big\},
	\end{equation*}
which is not unique when $N \in 2 \mathbb{Z} + 1$.

\section{Ishii's Method and the $C^{2}-$Case} \label{S: warm up}

We start by recalling the proof of Theorem \ref{T: main parabolic} in order to demonstrate the role played by each of the assumptions on the norm $\varphi$ and the operators $(\overline{F},\underline{F})$.  We also state without proof an elliptic analogue.
\vs 
The rest of the section is devoted to a discussion of the cases  $\varphi^{*} \in C^{1}(\mathbb{R}^{d} \setminus \{0\})$ or $\varphi \in C^{2}(\mathbb{R}^{d} \setminus \{0\})$.  We start by observing that in the former  the associated equations are continuous away from $\{0\} \times \mathcal{S}^{d}$ and the comparison follows by a now classical approach.  In the $C^{2}-$case, things are more interesting, and it turns out that the discussion is related to the theory of infinity harmonic functions.
\vs
We end the section discussing an example from \cite{ishii}.  In particular, we show how the assumptions of \cite{ishii} fit into the framework of this paper.

\subsection*{Proof of Theorem \ref{T: main parabolic}}  This proof appears in  \cite{ishii}.  We recall it here to keep the presentation self-contained and to make explicit the role of the compatibility assumption \eqref{A: consistency} and the definition of shielding norm, that is,  Definition \ref{D: good test function}.

\begin{proof}[Proof of Theorem \ref{T: main parabolic}]  We argue by contradiction.  If the statement of the theorem were false, we could find a $\sigma > 0$ such that
	\begin{equation*}
		\sup \left\{ w(x,t) - v(x,t) - \sigma t \, :  \, (x,t) \in \mathbb{R}^{d} \times [0,T) \right\} > 0.
	\end{equation*}

As usual, we double variables.  Fix $\zeta, \beta > 0$ and consider the function $\Phi = \Phi_{\zeta,\beta} : \mathbb{R}^{d} \times \mathbb{R}^{d} \times [0,T] \to \mathbb{R}$ given by
	\begin{equation*}
		\Phi(x,y,t) = w(x,t) - v(y,t) - \frac{\psi(x - y)^{4}}{4 \zeta} - \beta \|y\|^{4} - \sigma t.
	\end{equation*}
Since $w$ and $v$ are bounded, $\Phi$ attains its maximum at some point $(\bar{x}_{\zeta,\beta},\bar{y}_{\zeta,\beta},\bar{t}_{\zeta,\beta})$, and we have, suppressing the dependence on $\beta$ and $\zeta$,
	\begin{equation*}
		\sup \Big\{ \frac{\|\bar{x} -\bar{y}\|^{4}}{\zeta} + \beta \|\bar{y}\|^{4} \, :  \, (\zeta,\beta) \in (0,\infty)^{2} \Big\} < \infty.
	\end{equation*}
Furthermore, in view of the assumptions on $w(\cdot,0)$ and $v(\cdot,0)$, it is clear that there are constants $\zeta_{0}, \beta_{0} > 0$ such that $\bar{t}_{\zeta,\beta} > 0$ as soon as $(\zeta,\beta) \in (0,\zeta_{0}) \times (0,\beta_{0})$.  Henceforth, we choose $(\zeta, \beta)$ accordingly.
	\vs
In what follows, define $\bar{A} : \mathbb{R}^{d} \to \mathcal{S}^{d}$ and $\bar{p} : \mathbb{R}^{d} \to \mathbb{R}^{d}$ by 
\[		\bar{A}(\xi) = 3 \zeta^{-1} \psi(\xi)^{2} D\psi(\xi) \otimes D\psi(\xi) + \zeta^{-1} \psi(\xi)^{3} D^{2}\psi(\xi) \ \ \text{and} \ \ \bar{p}(\xi) = \zeta^{-1} \psi(\xi)^{3} D\psi(\xi),\]
together with the interpretation that $\bar{p}(0) = 0$ and $\bar{A}(0) = 0$.  Since $\psi$ is a shielding norm for $\varphi$, the inclusion $\bar{A}(\xi) \in \mathcal{S}(\bar{p}(\xi),\varphi)$ holds for each $\xi \in \mathbb{R}^{d}$.
\vs 	
Applying \cite[Lemma 1]{ishii},  which is a variant of the maximum principle for semi-continuous functions, we find matrices $X = X_{\zeta,\beta}, Y = Y_{\zeta,\beta} \in \mathcal{S}^{d}$ and real numbers $a,  b \in \mathbb{R}$ such that 
	\begin{gather*}
		\sigma = a - b, \quad a - \overline{F}(\bar{p}(\bar{x} - \bar{y}), X) \leq 0, \\
		b - \underline{F}(\bar{p}(\bar{x} - \bar{y}) - 4 \beta \|\bar{y}\|^{2} \bar{y}, Y - 8 \beta \bar{y} \otimes \bar{y} - 4 \beta \|\bar{y}\|^{2} \text{Id} ) \geq 0, \\
		-3 \left( \begin{array}{c c}
				\bar{A}(\bar{x} - \bar{y}) & 0 \\
				0 & \bar{A}(\bar{x} - \bar{y})
			\end{array} \right) 
			\leq \left( \begin{array}{c c}
									X & 0 \\
									0 & -Y
								\end{array} \right) 
								\leq 3 \left( \begin{array}{c c}
														\bar{A}(\bar{x} - \bar{y}) & -\bar{A}(\bar{x} - \bar{y}) \\
														-\bar{A}(\bar{x} - \bar{y}) & \bar{A}(\bar{x} - \bar{y}) 
														\end{array} \right).
	\end{gather*}
Note, in particular, that $-3\bar{A}(\bar{x} - \bar{y}) \leq X \leq Y \leq 3 \bar{A}(\bar{x} - \bar{y})$.
\vs 
At this stage, we send $\beta \to 0^{+}$.  In view of the matrix inequalities satisfied by $X$ and $Y$ and the bound on $\zeta^{-1} \|\bar{x} - \bar{y}\|^{4}$, we can fix $\bar{\xi} \in \mathbb{R}^{d}$ and $\bar{X}, \bar{Y} \in \mathcal{S}^{d}$ such that, up to subsequences,
	\begin{equation*}
		\bar{\xi} = \lim_{\beta \to 0^{+}} (\bar{x}_{\zeta, \beta} - \bar{y}_{\zeta,\beta}), \quad \bar{X} = \lim_{\beta \to 0^{+}} X_{\zeta,\beta}, \quad \bar{Y} = \lim_{\beta \to 0^{+}} Y_{\zeta,\beta}.
	\end{equation*}
Further, by continuity, $-3 \bar{A}(\bar{\xi}) \leq \bar{X} \leq \bar{Y} \leq 3 \bar{A}(\bar{\xi})$, and  Lemma \ref{L: matrix lemma} gives $\bar{X}, \bar{Y} \in \mathcal{S}(\bar{p}(\bar{\xi}),\varphi)$.
\vs
Using the bounds on $(\bar{y}_{\zeta,\beta})_{\beta > 0}$, we can eliminate the error terms involving $\beta$ in the previous inequalities to get
	\begin{equation*}
		\sigma + \underline{F}(\bar{p}(\bar{\xi}), \bar{Y}) - \overline{F}(\bar{p}(\bar{\xi}), \bar{X}) \leq 0.
	\end{equation*}
Invoking the inclusion $\bar{X}, \bar{Y} \in \mathcal{S}(\bar{p}(\bar{\xi}),\varphi)$, we find $\overline{F}(\bar{p}(\bar{\xi}),\bar{X}) = \underline{F}(\bar{p}(\bar{\xi}),\bar{X})$.  Thus, since $\bar{X} \leq \bar{Y}$ and $\underline{F}$ is elliptic, that is, \eqref{A: elliptic_2} holds, we arrive at the contradiction
	\begin{equation*}
		0 < \sigma \leq \sigma + \underline{F}(\bar{p}(\bar{\xi}), \bar{Y}) - \overline{F}(\bar{p}(\bar{\xi}), \bar{X}) \leq 0.
	\end{equation*}
\end{proof}    	

\subsection*{Comparison for elliptic problems}  As we already acknowledged in the introduction, there is no need to restrict attention to parabolic problems.  Here is a prototypical elliptic variant of Theorem \ref{T: main parabolic} that will be useful later in the paper.

\begin{prop} \label{P: main elliptic}  Suppose that $\varphi$ is a Finsler norm in $\mathbb{R}^{d}$, $(\overline{F},\underline{F})$ is a pair of operators satisfying \eqref{A: elliptic_1}, \eqref{A: elliptic_2}, and \eqref{A: semicontinuous} that are compatible with the geometry of $\varphi$, and $\varphi$ possesses a shielding norm $\psi$. 
\vs
If $\Omega \subseteq \mathbb{R}^{d}$ is a bounded open set, $f : \Omega \to (0,\infty)$ is a positive continuous function, and $(w,v) \in USC(\Omega) \times LSC(\Omega)$ satisfy
	\begin{gather*}
		- \overline{F}(Dw,D^{2}w) \leq 0 \ \ \text{in} \ \ \Omega, \quad -\underline{F}(Dv,D^{2}v) \geq f \ \ \text{in} \ \ \Omega, \\
		\lim_{\delta \to 0^{+}} \sup \left\{ w(x) - v(y) \, :  \, \|x - y\| + \text{dist}(x,\partial \Omega) + \text{dist}(y,\partial \Omega) \leq \delta \right\} \leq 0,
	\end{gather*}
then 
	\begin{equation*}
		\sup \left\{ w(x) - v(x) \, :  \, x \in \Omega \right\} \leq 0.
	\end{equation*}
\end{prop}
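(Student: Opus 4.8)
The plan is to rerun Ishii's doubling argument from the proof of Theorem \ref{T: main parabolic}, taking advantage of two simplifications afforded by the elliptic, bounded setting: the strictly positive right-hand side $f$ plays the role that the constant $\sigma$ played in the parabolic case (so no time term is needed to make the relevant inequality strict), and boundedness of $\Omega$ removes the need for the auxiliary $\beta\|y\|^{4}$ term, since the penalization by the shielding norm together with the boundary hypothesis already confines the maximum to the interior. Arguing by contradiction, I assume $\theta := \sup\{w(x) - v(x) \, : \, x \in \Omega\} > 0$; as in Theorem \ref{T: main parabolic}, I take $w$ bounded above and $v$ bounded below.

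First I would double variables. For $\zeta > 0$ set $\Phi_{\zeta}(x,y) = w(x) - v(y) - \psi(x - y)^{4}/(4\zeta)$ on $\Omega \times \Omega$, where $\psi$ is the shielding norm. Since $\psi \in C^{2}(\mathbb{R}^{d} \setminus \{0\})$ is a Finsler norm, the map $\xi \mapsto \psi(\xi)^{4}$ belongs to $C^{2}(\mathbb{R}^{d})$ and vanishes to second order at the origin, so the standard maximum principle for semicontinuous functions is applicable. Using the boundary hypothesis together with $\sup \Phi_{\zeta} \ge \theta > 0$ (and that $\psi(\bar x_{\zeta} - \bar y_{\zeta}) = O(\zeta^{1/4})$ along maximizing sequences), one checks that for all sufficiently small $\zeta$ the supremum of $\Phi_{\zeta}$ is attained at a pair $(\bar x_{\zeta}, \bar y_{\zeta}) \in \Omega \times \Omega$ of interior points. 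Fix such a $\zeta$, write $\bar\xi = \bar x_{\zeta} - \bar y_{\zeta}$, and set, exactly as in the proof of Theorem \ref{T: main parabolic}, $\bar p(\xi) = \zeta^{-1} \psi(\xi)^{3} D\psi(\xi)$ and $\bar A(\xi) = \zeta^{-1}\big(3 \psi(\xi)^{2} D\psi(\xi) \otimes D\psi(\xi) + \psi(\xi)^{3} D^{2}\psi(\xi)\big)$, with the convention $\bar p(0) = \bar A(0) = 0$. Since $\partial \varphi^{*}$ is $0$-homogeneous, $\mathcal{S}(\bar p(\xi),\varphi) = \mathcal{S}(D\psi(\xi),\varphi)$, and then the shielding property $D^{2}\psi(\xi) \in \mathcal{S}(D\psi(\xi),\varphi)$ of Definition \ref{D: good test function} together with the trivial inclusion $D\psi(\xi) \otimes D\psi(\xi) \in \mathcal{S}(D\psi(\xi),\varphi)$ yields $0 \le \bar A(\xi) \in \mathcal{S}(\bar p(\xi),\varphi)$ for every $\xi$.

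Next I would apply the maximum principle for semicontinuous functions (\cite[Lemma 1]{ishii}) to $\Phi_{\zeta}$ at $(\bar x_{\zeta},\bar y_{\zeta})$. Because $D_{x}[\psi(x - y)^{4}/(4\zeta)]$ and $-D_{y}[\psi(x-y)^{4}/(4\zeta)]$ both equal $\bar p(\bar\xi)$ at the maximum, this produces $X, Y \in \mathcal{S}^{d}$ with
\[
	-\overline{F}(\bar p(\bar\xi), X) \le 0, \qquad -\underline{F}(\bar p(\bar\xi), Y) \ge f(\bar y_{\zeta}) > 0,
\]
and $-3\bar A(\bar\xi) \le X \le Y \le 3\bar A(\bar\xi)$. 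In particular $-3\bar A(\bar\xi) \le X \le 3\bar A(\bar\xi)$ and $-3\bar A(\bar\xi) \le Y \le 3\bar A(\bar\xi)$, so Lemma \ref{L: matrix lemma}, applied with the subspace $\langle \bar p(\bar\xi) \rangle \oplus \mathcal{T}(\bar p(\bar\xi),\varphi)$ and $A = 3\bar A(\bar\xi)$, gives $X, Y \in \mathcal{S}(\bar p(\bar\xi),\varphi)$. Note that, in contrast with the parabolic proof, no passage to the limit $\zeta \to 0^{+}$ is needed here: the gradients fed into $\overline{F}$ and $\underline{F}$ already coincide.

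Finally I would conclude: compatibility \eqref{A: consistency} gives $\overline{F}(\bar p(\bar\xi),X) = \underline{F}(\bar p(\bar\xi),X)$; ellipticity \eqref{A: elliptic_2} together with $X \le Y$ gives $\underline{F}(\bar p(\bar\xi),X) \le \underline{F}(\bar p(\bar\xi),Y) \le -f(\bar y_{\zeta}) < 0$; and the subsolution inequality gives $\overline{F}(\bar p(\bar\xi),X) \ge 0$. Hence $0 \le \overline{F}(\bar p(\bar\xi),X) = \underline{F}(\bar p(\bar\xi),X) < 0$, which is the desired contradiction. I expect the only genuinely non-mechanical step to be the localization in the second paragraph — namely, extracting from the boundary hypothesis that, for $\zeta$ small, $\Phi_{\zeta}$ attains its maximum at a pair of interior points of $\Omega$; once that is in place, the rest is a transcription of the proof of Theorem \ref{T: main parabolic}, with $f$ in place of $\sigma$ and no $\beta$-term.
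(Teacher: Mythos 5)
Your proof is correct and is exactly the adaptation the paper has in mind when it writes that the proof ``is similar to that of Theorem \ref{T: main parabolic}, hence we omit it'': you replace the constant $\sigma$ by the strictly positive datum $f$, drop the $\beta\|y\|^{4}$ coercivity term because $\Omega$ is bounded and instead use the combined boundary hypothesis to localize the doubled maximum in $\Omega\times\Omega$, and then read off the contradiction at a fixed small $\zeta$ via Ishii's lemma, the matrix Lemma \ref{L: matrix lemma}, compatibility \eqref{A: consistency}, and ellipticity \eqref{A: elliptic_2}. Two small remarks: the limit that disappears relative to the parabolic argument is $\beta\to 0^{+}$, not $\zeta\to 0^{+}$ (in Theorem \ref{T: main parabolic} $\zeta$ is held fixed after sending $\beta\to 0^{+}$), and the upper bound on $w$ and lower bound on $v$ that your localization step uses is implicit in the proposition's statement, whereas it is assumed explicitly in Theorem \ref{T: main parabolic}.
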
   

The proof of Proposition \ref{P: main elliptic} is similar to that of Theorem \ref{T: main parabolic}, hence we omit it.

\subsection*{Strictly convex Finsler norms}  Recall that one of our main examples is the Finsler infinity Laplacian or the pair $(\overline{G}_{\varphi},\underline{G}^{\varphi})$ of \eqref{E: infinity laplace}.  It is clear that, if $\varphi^{*} \in C^{1}(\mathbb{R}^{d} \setminus \{0\})$ or, equivalently, $\{\varphi \leq 1\}$ is strictly convex, then $\overline{G}_{\varphi} \equiv \underline{G}^{\varphi}$ outside of $\{0\} \times \mathcal{S}^{d}$ so we are working with a continuous nonlinearity as long as the gradient does not vanish.  It is natural to ask if this is always necessarily the case for nonlinearities  compatible with such a Finsler norm.
Indeed, this is exactly the content of Proposition \ref{P: strictly convex norms}.
\vs
Notice that the final statement of the proposition implies that the $1$-Laplacian operator $F(p,X) = \text{tr}((\text{Id} - \|p\|^{-2} p \otimes p)X)$ is associated to a pair $(\overline{F},\underline{F})$ compatible with the Euclidean norm.  Hence, as already observed in \cite{ishii}, the comparison principle for the level set mean curvature flow can be seen as a special case of Theorem \ref{T: main parabolic}.

	\begin{proof}[Proof of Proposition \ref{P: strictly convex norms}]  Since $\varphi^{*} \in C^{1}(\mathbb{R}^{d} \setminus \{0\})$, we have $\partial \varphi^{*}(p) = \{D\varphi^{*}(p)\}$ for each $p \in \mathbb{R}^{d} \setminus \{0\}$.  Accordingly, since the direct sum of a line and a $(d - 1)$-dimensional linear space has dimension $d$, we find
		\begin{equation*}
			\langle p \rangle \oplus \mathcal{T}(p,\varphi) = \langle p \rangle \oplus \langle D\varphi^{*}(p) \rangle^{\perp} = \mathbb{R}^{d} \ \  \text{if} \ \  p \in \mathbb{R}^{d} \setminus \{0\}.
		\end{equation*}
Thus, by definition, $\mathcal{S}(\cdot,\varphi) \equiv \mathcal{S}^{d}$ in $\mathbb{R}^{d} \setminus \{0\}$, and the conclusions of the proposition follow immediately.   \end{proof}

\subsection*{$C^{2}-$norms}  We now prove Theorem \ref{T: C2 norms}, that is, we show that, if $\varphi \in C^{2}(\mathbb{R}^{d} \setminus \{0\})$, then $\varphi$ is a shielding norm for itself.  This turns out to be intimately related to the fact that any Finsler norm is a solution of its own infinity harmonic equation, a fact which is the topic of Lemma \ref{L: infinity harmonic cones} below.

	\begin{prop} \label{P: smooth case}  For each $q \in \mathbb{R}^{d} \setminus \{0\}$, 
		\begin{equation*}
			\partial \varphi^{*}(D\varphi(q)) \subseteq \text{Ker} \, D^{2}\varphi(q).
		\end{equation*}
	\end{prop}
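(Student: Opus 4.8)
The plan is to exploit the one-homogeneity of $\varphi$ together with the smoothness hypothesis $\varphi \in C^2(\mathbb{R}^d \setminus \{0\})$, and to identify $\partial\varphi^*(D\varphi(q))$ explicitly. The key preliminary observation is that for $q \neq 0$ one has $D\varphi(q) \in \{\varphi^* = 1\}$ and, by the representation \eqref{E: subdifferential} together with Euler's identity $\langle D\varphi(q), q \rangle = \varphi(q)$, the vector $q/\varphi(q)$ lies in $\partial\varphi^*(D\varphi(q))$. More is true: since $\varphi \in C^2$ away from the origin, $D\varphi$ is continuous there, and one can check that $\partial\varphi^*(D\varphi(q))$ is \emph{exactly} the set of $q' \neq 0$ with $D\varphi(q') = D\varphi(q)$, together with $0$, scaled appropriately — but for the proof I only need the containment direction, so I will argue as follows.

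First I would differentiate the homogeneity relation. From $\varphi(\lambda q) = \lambda\varphi(q)$ for $\lambda > 0$, differentiating in $q$ gives $D\varphi(\lambda q) = D\varphi(q)$ for all $\lambda > 0$; that is, $D\varphi$ is $0$-homogeneous. Differentiating \emph{this} identity in $\lambda$ and evaluating at $\lambda = 1$ yields $D^2\varphi(q)\, q = 0$, so $q \in \mathrm{Ker}\, D^2\varphi(q)$. This already handles the element $q/\varphi(q) \in \partial\varphi^*(D\varphi(q))$. It remains to show that \emph{every} element of $\partial\varphi^*(D\varphi(q))$ lies in $\mathrm{Ker}\, D^2\varphi(q)$, not just the distinguished one.

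To get the rest, fix $p = D\varphi(q)$ and take an arbitrary $q' \in \partial\varphi^*(p)$, which we may assume nonzero. By \eqref{E: subdifferential} applied to $\varphi^*$, one has $p \in \partial\varphi(q')$, and since $\varphi$ is differentiable at $q'$ this forces $D\varphi(q') = p = D\varphi(q)$. Now I would use the $C^2$ hypothesis: for $t$ in a neighbourhood of $0$ the curve $t \mapsto D\varphi(q + t q')$ is $C^1$, and its derivative at $t$ is $D^2\varphi(q + tq')\,q'$. I want to show this derivative vanishes at $t = 0$. The idea is that $D\varphi$ is constant along the ray through $q'$ (by $0$-homogeneity, $D\varphi(sq') = D\varphi(q') = D\varphi(q)$ for $s > 0$) and also constant along the ray through $q$; the delicate point is passing between these two rays. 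Here I would invoke convexity: the function $h(v) = \varphi^*(p)\varphi(v) - \langle p, v\rangle$ is convex, nonnegative, $C^2$ on $\mathbb{R}^d \setminus \{0\}$, and vanishes precisely on the cone $\{v \neq 0 : D\varphi(v) = p\} \cup \{0\}$ — in particular at both $q$ and $q'$, and (by the representation of $\partial\varphi$ via supporting hyperplanes) on the whole segment $[q, q']$ if that segment avoids the origin, or at least along the rays. Since $h \geq 0$ attains its minimum value $0$ at $q$, we get $D^2 h(q) \geq 0$, and since $h$ vanishes to second order along any direction in which it stays zero, $D^2 h(q)\, w = 0$ for $w$ tangent to the zero set at $q$. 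Computing, $D^2 h(q) = \varphi^*(p)\, D^2\varphi(q)$, so $D^2\varphi(q)\, w = 0$ for such $w$; and $q' - q$ (or $q'$ itself, using that $D^2\varphi(q)q = 0$) is such a direction.

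The main obstacle I anticipate is the last step: justifying rigorously that $q'$ is a "direction in which $h$ vanishes to second order at $q$," i.e. that the zero set of $h$ contains a $C^1$ curve through $q$ with tangent $q'$ (or that $q'$ lies in the kernel of the Hessian of a convex function at one of its minima given that $q'$ is another minimum). The cleanest route is probably: $h$ convex, $h(q) = h(q') = 0 = \min h$ implies $h \equiv 0$ on the segment $[q,q']$ provided $0 \notin [q,q']$; if $0 \in [q,q']$ one instead notes $h(sq') = 0$ for all $s > 0$ and $h((1-s)q) = 0$ for $s \in [0,1]$ and uses that $q, q'$ both lie in the open cone $\{h = 0\}^\circ$ relative to which one can connect them — then differentiating $t \mapsto h(q + t(q'-q)) \equiv 0$ twice at $t = 0$ gives $\langle D^2 h(q)(q'-q), q'-q\rangle = 0$, and since $D^2 h(q) = \varphi^*(p) D^2\varphi(q) \geq 0$ this quadratic form being zero forces $D^2\varphi(q)(q'-q) = 0$; combined with $D^2\varphi(q)q = 0$ this yields $D^2\varphi(q)q' = 0$, as desired. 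Handling the degenerate case where the origin lies on the segment $[q,q']$ will require the small extra argument via rays sketched above, and that is the one place where care is genuinely needed.
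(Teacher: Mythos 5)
Your argument is correct, and it takes a genuinely different route from the paper's. The paper normalizes $\varphi(q)=1$, observes that $q'\mapsto D\varphi(q')$ is constant on $\mathrm{ri}(\partial\varphi^*(D\varphi(q)))$, differentiates that identity at a relative interior point $\tilde q$ to get $D^2\varphi(\tilde q)(q'-\tilde q)=0$, combines it with $D^2\varphi(\tilde q)\tilde q=0$, and then handles boundary points of the face by approximating $q$ with relative-interior points $\tilde q_n\to q$ and using continuity of $D^2\varphi$. You instead fix $p=D\varphi(q)$, set $h(v)=\varphi(v)-\langle p,v\rangle$ (the support-function defect, using $\varphi^*(p)=1$), note that $h$ is convex, nonnegative, $C^2$ away from the origin, and vanishes at both $q$ and every $q'\in\partial\varphi^*(p)$; convexity then forces $h\equiv 0$ on the segment $[q,q']$, so $\langle D^2h(q)(q'-q),q'-q\rangle=0$, and the standard fact that a positive semidefinite matrix annihilates any null vector of its quadratic form gives $D^2\varphi(q)(q'-q)=0$, hence $D^2\varphi(q)q'=0$ via $D^2\varphi(q)q=0$. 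What your approach buys: it avoids the case split between $q\in\mathrm{ri}$ and $q\in\mathrm{bdry}$ of the face and the attendant limiting argument, and it doesn't need the (true but unproved-in-the-paper) fact that $D\varphi$ is constant on the relative interior of the face; you only need $D\varphi(q)=D\varphi(q')=p$ at the two endpoints, plus convexity of $h$.

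One small simplification you can make: the degenerate case $0\in[q,q']$ that you flag as the delicate point cannot occur. Indeed the linear functional $\langle p,\cdot\rangle$ is strictly positive at both endpoints, since $\langle p,q\rangle=\langle D\varphi(q),q\rangle=\varphi(q)>0$ by Euler's identity and $\langle p,q'\rangle=\varphi^*(p)=1>0$ by definition of $\partial\varphi^*(p)$; by linearity $\langle p,\cdot\rangle>0$ throughout the segment, so $0\notin[q,q']$. (Equivalently, $0\in[q,q']$ would force $q'$ to be a negative multiple of $q$, contradicting $\langle p,q'\rangle>0$.) So you need only the clean route: $h$ is $C^2$ in a neighborhood of $q$, $h\equiv 0$ on $[q,q']$, so $g(t)=h(q+t(q'-q))$ is $C^2$ near $t=0$ with $g\equiv 0$ on $[0,1]$, giving $g''(0)=0$ by continuity of $g''$, and the rest follows as you wrote.
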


Notice that the proposition implies that $\varphi$ is $\varphi$-infinity harmonic in $\mathbb{R}^{d} \setminus \{0\}$.  Unlike some known examples of smooth infinity harmonic functions, though, $\varphi$ has the property that its $\varphi$-infinity Laplacian is unambiguously defined.  More precisely, Proposition \ref{P: smooth case} implies that
	\begin{equation*}
		-\overline{G}_{\varphi}(D\varphi(x),D^{2}\varphi(x)) = -\underline{G}^{\varphi}(D\varphi(x),D^{2}\varphi(x)) = 0 \ \  \text{if} \ \ x \in \mathbb{R}^{d} \setminus \{0\}.
	\end{equation*}
For an example of a smooth $\ell^{1}-$infinity harmonic function for which this identity fails, see \cite[Example 5.2]{aronsson_crandall_juutinen}.
\vs
Before proving the proposition, we show how it implies that $C^{2}-$norms are shielding norms.

\begin{proof}[Proof of Theorem \ref{T: C2 norms}]    Fix $q \in \mathbb{R}^{d} \setminus \{0\}$.  To see that $\varphi$ is a shielding norm, we need to show that $D^{2}\varphi(q) \in \mathcal{S}(D\varphi(q),\varphi)$.  By Proposition \ref{P: smooth case}, we know that $\partial \varphi^{*}(D\varphi(q)) \subseteq \text{Ker} \, D^{2}\varphi(q)$.  Thus, employing the notation of Section \ref{S: definition of compatibility}, we have $D^{2}\varphi(q) \in \mathcal{S}_{V}$ with $V = \partial \varphi^{*}(D\varphi(q))^{\perp} = \mathcal{T}(D\varphi(q),\varphi)$.  This implies the desired inclusion since $\mathcal{S}_{V} \subseteq \mathcal{S}(D\varphi(q),\varphi)$.
 \end{proof}  

	\begin{proof}[Proof of Proposition \ref{P: smooth case}]  By homogeneity, we can assume that $\varphi(q) = 1$.  We also note that $\partial \varphi^{*}(D\varphi(q))$ is a boundary face of $\{\varphi \leq 1\}$ containing $q$.  To begin with, fix $\tilde{q} \in \text{ri}(\partial \varphi^{*}(D\varphi(q)))$.  It is not hard to show that $q' \mapsto D\varphi(q')$ is a constant map in $\text{ri}(\partial \varphi^{*}(D\varphi(q))$.  Thus, if $q' \in \partial \varphi^{*}(D\varphi(q))$, it follows that, for $t \in (0,1)$ small enough,
		\begin{equation*}
			D\varphi(\tilde{q} + t(q' - \tilde{q})) = D\varphi(\tilde{q}).
		\end{equation*}
	Differentiating the equation above yields  $D^{2} \varphi (\tilde{q}) (q' - \tilde{q}) = 0$.  On the other hand, $D^{2} \varphi(\tilde{q})\tilde{q} = 0$ since $\varphi$ is linear along the line $\langle \tilde{q} \rangle$.  We conclude that $D^{2}\varphi(\tilde{q}) q' = 0$.
	\vs
	The previous paragraph completes the argument when $q \in \text{ri}(\partial \varphi^{*}(D\varphi(q))$.  If $q \in \text{bdry} \, \partial \varphi^{*}(D\varphi(q))$, then we can find a sequence $(\tilde{q}_{n})_{n \in \mathbb{N}}$ as above such that $\tilde{q}_{n} \to q$.  Since $D^{2} \varphi$ is continuous, we conclude that $D^{2}\varphi(q) q' = \lim_{n \to \infty} D^{2} \varphi(\tilde{q}_{n})q' = 0$ for all $q' \in \partial \varphi^{*}(D\varphi(q))$. \end{proof}

\subsection*{Example: Anisotropic Curvature Flows} \label{Example: anisotropic curvature flows}  One of the main examples of \cite{ishii} is a class of anisotropic curvature flows.  One strategy for studying the flow with respect to a given a Finsler norm $\psi$ is to consider the level set PDE
	\begin{equation} \label{E: curvature}
		u_{t} - \text{tr} \left( D^{2} \psi(\widehat{Du}) D^{2}u \right) = 0 \ \  \text{in} \ \ \mathbb{R}^{d} \times (0,\infty).
	\end{equation}
If $\psi \notin C^{2}$, then the coefficients are not continuous so the usual comparison principle for mean curvature flow does not apply.  
\vs 
Nonetheless, as pointed out in \cite{ishii}, it is possible to extend the comparison principle to apply to certain piecewise $C^{2}-$Finsler norms.  
\vs 
The idea is the following. Let $O$ be a $C^{2}$ open subset of $S^{d-1}$ and  $M = \partial O$, and assume that $\psi_{1}, \psi_{2} \in C^{2}(\mathbb{R}^{d} \setminus \{0\})$ are Finsler norms such that
	\begin{equation} \label{E: first order identity}
		\psi_{1}(p) = \psi_{2}(p), \quad D\psi_{1}(p) = D\psi_{2}(p) \ \  \text{if} \ \ p \in \mathbb{R}_{+} M \setminus \{0\}.
	\end{equation}
If $\psi$ is given by 
	\begin{equation*}
		\psi(p) = \left\{ \begin{array}{r l}
						\psi_{1}(p) & \text{if} \, \, p \in \mathbb{R}_{+}O, \\
						\psi_{2}(p) & \text{otherwise},
					\end{array} \right.
	\end{equation*}
then $D^{2}\psi$ may not be well-defined on $\mathbb{R}_{+} M$.  
\vs
Even so, it is still possible to make sense of \eqref{E: curvature}.  Notice that, for each $p \in \mathbb{R}_{+} M \setminus \{0\}$, the directional second derivative $\langle D^{2}\psi(p) \eta,\eta \rangle$ is well-defined for each $\eta \in T_{p}M$.  This follows from \eqref{E: first order identity} and some calculation.  Therefore, if $\overline{F}$ and $\underline{F}$ are as follows 
	\begin{align*}
		\overline{F}(p,X) &= \lim_{\delta \to 0^{+}} \sup \Big\{ \text{tr} (D^{2}\psi(\widehat{p'}) X) \, :  \, \|p' - p\| < \delta \Big\}, \\ 
		\underline{F}(p,X) &= \lim_{\delta \to 0^{+}} \inf \Big\{ \text{tr} (D^{2}\psi(\widehat{p'}) X) \, :  \, \|p' - p\| < \delta \Big\},
	\end{align*}
and if $\mathcal{S}(p,M) = \mathcal{S}_{\langle p \rangle \oplus T_{p}M}$ for $p \in \mathbb{R}_{+} M$ and $\mathcal{S}(0,M) = \{0\}$, then 
	\begin{align*}
		\overline{F}(p,X) &= \underline{F}(p,X) \ \  \text{if} \, \, p \in \mathbb{R}_{+}M, \ \ X \in \mathcal{S}(p,M), \\[1.2mm]
		\underline{F}(p,X) &= \overline{F}(p,X) \quad \text{if} \, \, (p,X) \in (\mathbb{R}^{d} \setminus \mathbb{R}_{+}M) \times \mathcal{S}^{d}.
	\end{align*}
These last identities should remind the reader of \eqref{A: consistency}.  Indeed, one of the main technical devices of \cite{ishii} is the following result:

	\begin{prop}[\cite{ishii}, Lemma 1]  If $\mathcal{O}$, $\overline{F}$, and $\underline{F}$ are as in the preceding discusison, then there is a Finsler norm $\varphi \in C^{2}(\mathbb{R}^{d} \setminus \{0\})$ such that, for each $p \in \mathbb{R}_{+} M$,
		\begin{equation*}
			\mathcal{S}(p,\varphi) = \mathcal{S}(p,M).
		\end{equation*}
	In particular, the pair $(\overline{F},\underline{F})$ is compatible with $\varphi$.  \end{prop}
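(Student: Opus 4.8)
The plan is to manufacture $\varphi$ by a localized surgery on $\psi$ along the singular cone $\mathbb{R}_{+}M$, keeping $\varphi$ equal to $\psi$ away from a small transverse tube about that cone. The first move is to translate the goal into the language of the dual ball. Since $\mathcal{S}(p,\varphi) = \mathcal{S}_{\langle p\rangle \oplus \mathcal{T}(p,\varphi)}$ with $\mathcal{T}(p,\varphi) = \partial\varphi^{*}(p)^{\perp}$, since $\mathcal{S}_{V}$ determines $V$, and since $T_{p}M$ has dimension $d-2$ and is orthogonal to $p$, it is enough to produce a $C^{2}$ Finsler norm $\varphi$ with $\mathcal{T}(p,\varphi) = T_{p}M$ for every $p \in \mathbb{R}_{+}M$; equivalently, $\partial\varphi^{*}(p)$ should be a line segment whose linear span is the $2$-plane $(T_{p}M)^{\perp} = \langle p\rangle \oplus \langle \nu(p)\rangle$, where $\nu$ is the unit normal field of $M$ inside $S^{d-1}$. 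Geometrically, the boundary $\{\varphi = 1\}$ must contain, over each $\hat p \in M$, a flat segment with outer normal direction $\hat p$ and tangent direction $\nu(\hat p)$, while off $\mathbb{R}_{+}M$ there is no constraint. The latter is why the final ``in particular'' is immediate: compatibility of $(\overline{F},\underline{F})$ with $\varphi$ is the identity $\overline{F}(p,X) = \underline{F}(p,X)$ on $\mathcal{S}(p,\varphi)$, which for $p \in \mathbb{R}_{+}M$ is the hypothesis on $\mathcal{S}(p,M)$ from the preceding discussion, for $p = 0$ is the triviality $\overline{F}(0,0) = \underline{F}(0,0)$, and for the remaining $p$ holds for all $X$ because $\psi \in C^{2}$ near such $p$.

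To realize those segments I would inflate the unit ball of $\psi$ along $\Gamma := \{\psi = 1\}\cap \mathbb{R}_{+}M$. Over $\hat p \in M$ let $q^{*}(\hat p)$ be the boundary point of $\{\psi\le 1\}$ with outer normal $\hat p$ (a single point under the natural uniform convexity hypothesis on $\psi_{1},\psi_{2}$ for anisotropic curvature flow). Inside a transverse tube of radius $2\delta$ about $\Gamma$, replace $\{\psi = 1\}$ by a flat segment $\sigma_{\delta}(\hat p)$ through $q^{*}(\hat p)$, of half-length comparable to $\delta$, lying in the supporting hyperplane there and parallel to $\nu(\hat p)$, flanked on each side by a ``curvature ramp'': a $C^{2}$ convex piece along which the transverse second fundamental form increases from $0$ at the end of $\sigma_{\delta}(\hat p)$ up to its value on $\{\psi_{i} = 1\}$ where the tube closes, the tangential second fundamental form being interpolated from the value common to $\psi_{1}$ and $\psi_{2}$. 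The identities $\psi_{1} = \psi_{2}$ and $D\psi_{1} = D\psi_{2}$ on $\mathbb{R}_{+}M$ — differentiated tangentially, they force every second derivative of $\psi$ except the purely transverse one $\langle D^{2}\psi\cdot\nu,\nu\rangle$ to agree across the cone, and in particular make $\psi|_{M}$ genuinely $C^{2}$ — ensure $\sigma_{\delta}$ and the ramps glue onto the two $C^{2}$ caps $\{\psi_{i} = 1\}$ with matching $1$-jet, so the ramp is precisely the device that fixes the transverse curvature. Call the resulting gauge $\varphi$. Along the $2$-cone $\mathbb{R}_{+}\sigma_{\delta}(\hat p)$ the function $\varphi$ is linear, so $D\varphi \equiv \hat p/\varphi^{*}(\hat p)$ there; hence $\partial\varphi^{*}(\hat p)$ is exactly $\sigma_{\delta}(\hat p)$, its span is $(T_{\hat p}M)^{\perp}$, and $\mathcal{T}(\hat p,\varphi) = T_{\hat p}M$.

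The crux — and where I expect most of the work to lie, as in \cite{ishii} — is carrying out the surgery so that $\varphi$ is simultaneously a genuine Finsler norm ($1$-homogeneous, positive definite, globally convex) and genuinely $C^{2}$ on $\mathbb{R}^{d}\setminus\{0\}$. Global convexity is delicate because the modification is performed over the whole $(d-2)$-dimensional family $M$ at once; one controls it by working in a tubular neighbourhood of $\mathbb{R}_{+}M$ and taking $\delta$ small, whereupon convexity reduces to a one-dimensional estimate on each transverse slice, uniform in $\hat p$ — and here the first-order agreement of $\psi_{1},\psi_{2}$ is indispensable, as it makes the two caps meet the flat insert with a common tangent plane. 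The $C^{2}$ requirement forces the ramp profile to have transverse curvature vanishing to first order at the end of $\sigma_{\delta}(\hat p)$ (an Euler-spiral-type rather than circular transition), and one must be mindful that the tubular coordinates $(\hat p(q), s(q))$ are only $C^{1}$ in $q$ when $M$ is merely $C^{2}$, even though the transverse signed distance $s$ itself is $C^{2}$; this is handled by letting the $\hat p$-dependent coefficients enter the profile multiplied by factors vanishing to sufficient order in $s$, or, more cleanly, by first replacing $M$ with a $C^{\infty}$ hypersurface first-order tangent to it along $\mathbb{R}_{+}M$ — a change affecting neither the spaces $T_{p}M$ nor the relevant $2$-jet of $\psi$. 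Once $\varphi$ is produced, Theorem \ref{T: C2 norms} shows it is a shielding norm for itself, so Theorem \ref{T: main parabolic} and Proposition \ref{P: main elliptic} apply to the curvature-flow pair $(\overline{F},\underline{F})$.
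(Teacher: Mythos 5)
The paper attributes this proposition to \cite[Lemma 1]{ishii} and does not reprove it, so there is no in-text argument to compare against; I can only assess your proposal on its own terms. Your geometric reduction is the right starting point: $\mathcal{S}(p,\varphi)=\mathcal{S}(p,M)$ follows once, for each $\hat p\in M$, the set $\{\varphi=1\}$ contains a flat line segment with outer normal $\hat p$ and tangent direction $\nu(\hat p)$, and your dispatch of the ``in particular'' clause (trivial at $p=0$, automatic off $\mathbb{R}_{+}M$, the hypothesis on $\mathcal{S}(p,M)$ on $\mathbb{R}_{+}M$) is correct. But the entire weight of the proposition rests on actually performing the surgery so that the resulting $\varphi$ is simultaneously convex, one-homogeneous, positive definite, and $C^{2}$, and this is precisely the step you acknowledge you have not carried out. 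The injectivity of the segment field for small $\delta$, the existence of a $C^{2}$ (and not merely $C^{1,1}$) ramp profile, and above all the \emph{global} convexity of the modified body all require quantitative estimates which the proposal only gestures at; as written you have a plausible roadmap rather than a proof.

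There are also two localized errors worth fixing. First, you conflate $\Gamma=\{\psi=1\}\cap\mathbb{R}_{+}M$ with the dual image $M^{*}=\{q^{*}(\hat p):\hat p\in M\}$: the flattening must occur in a tube around $M^{*}$, which is disjoint from $\Gamma$ unless $\psi$ is a multiple of the Euclidean norm, so ``keeping $\varphi=\psi$ away from a tube about $\mathbb{R}_{+}M$'' would leave $\{\psi=1\}$ untouched exactly where the segments have to go. (In fact $\mathcal{S}(p,M)$ depends only on $M$ and not on $\psi$, so there is no reason to perform surgery on $\psi$ at all; building $\{\varphi\le1\}$ from the round ball both sidesteps the extraneous uniform-convexity hypothesis on $\psi_{1},\psi_{2}$ and makes the relevant tube explicit.) Second, the claim that $\sigma_{\delta}(\hat p)$ has linear span $(T_{\hat p}M)^{\perp}=\langle\hat p,\nu(\hat p)\rangle$, hence $\mathcal{T}(\hat p,\varphi)=T_{\hat p}M$, is false in general: the span is $\langle q^{*}(\hat p),\nu(\hat p)\rangle$, which differs from $\langle\hat p,\nu(\hat p)\rangle$ whenever $q^{*}(\hat p)\notin\langle\hat p,\nu(\hat p)\rangle$. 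The desired conclusion $\mathcal{S}(\hat p,\varphi)=\mathcal{S}(\hat p,M)$ is nevertheless true, because $\langle\hat p\rangle\oplus\langle q^{*}(\hat p),\nu(\hat p)\rangle^{\perp}=\nu(\hat p)^{\perp}=\langle\hat p\rangle\oplus T_{\hat p}M$ (using $\hat p\perp\nu(\hat p)$ and $\langle\hat p,q^{*}(\hat p)\rangle>0$), but you must argue at the level of the direct sums $\langle p\rangle\oplus\mathcal{T}(p,\cdot)$, not via the incorrect identity $\mathcal{T}(\hat p,\varphi)=T_{\hat p}M$.
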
  	
	
\section{Polyhedral Norms} \label{S: polyhedral norm}

We now proceed to the polyhedral setting.  The next result is the main technical ingredient that makes the study possible.  Roughly, it says that, while $\varphi$ is far from being $C^{2}$, we can approximate it by $C^{2}$ $\varphi$-infinity harmonic functions.  Hence even though $\varphi$ is not $C^{2}$, we are not too far from Proposition \ref{P: smooth case}.

\begin{prop} \label{P: polyhedral case}  Assume that $\varphi : \mathbb{R}^{d} \to [0,\infty)$ is a polyhedral Finsler norm.

(a) For each $c > 0$, there is a sequence of smooth, convex functions $(f_{c,n})_{n \in \mathbb{N}}$ such that $f_{c,n} \to \varphi$ uniformly in $\{\varphi \geq c\}$ as $n \to \infty$ and
	\begin{equation} \label{E: infinity harmonic}
		\varphi^{*}(Df_{c,n}(q)) = 1 \quad \text{and} \quad D^{2} f_{c,n}(q) \in \mathcal{S}_{\mathcal{T}(Df_{c,n}(q))} \ \ \text{if} \ \ q \in \{\varphi \geq c\}.
	\end{equation}
In particular,
	\begin{equation}
		\partial \varphi^{*}(Df_{c,n}(q)) \subseteq \text{Ker} \, D^{2}f_{c,n}(q) \ \  \text{if} \ \ q \in \{\varphi \geq c\}.
	\end{equation}

(b) There is a sequence of Finsler norms $(\varphi_{n})_{n \in \mathbb{N}} \subseteq C^{2}(\mathbb{R}^{d} \setminus \{0\})$ such that, as $n \to \infty$, $\varphi_{n} \to \varphi$ locally uniformly in $\mathbb{R}^{d}$ and, for each $q \in \mathbb{R}^{d} \setminus \{0\}$ and $n \in \mathbb{N}$,
	\begin{equation*}
		D^{2}\varphi_{n}(q) \in \mathcal{S}(D\varphi_{n}(q)).
	\end{equation*}
In particular, $\varphi$ possesses a shielding norm.
\end{prop}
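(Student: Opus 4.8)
The plan is to prove Proposition \ref{P: polyhedral case} by a mollification argument of the type suggested in \cite{crandall_gunnarsson_wang}, working on the dual side. The key observation is that, since $\varphi$ is polyhedral, by Proposition \ref{P: polyhedral norms duality} its dual $\varphi^{*}$ is polyhedral as well, so the unit ball $\{\varphi^{*} \le 1\}$ is a polytope whose boundary is a union of finitely many flat faces. On each open face, the Gauss map (i.e.\ the outward normal direction, which corresponds to a vertex $p_i$ of $\{\varphi \le 1\}$, equivalently $D\varphi$ evaluated on the corresponding cone) is locally constant. The heuristic of Proposition \ref{P: smooth case} is exactly that $D^{2}\varphi$ annihilates $\partial\varphi^{*}(D\varphi(q))$ because $D\varphi$ is constant along that face; for a polyhedral norm this holds in an even stronger, ``flat'' sense, but it fails at the $(d-2)$-skeleton where faces meet, which is precisely where $\varphi^{*}$ fails to be $C^{1}$ and $\varphi$ fails to be differentiable. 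The strategy is to round off this skeleton by convolution while arranging that the gradient direction stays constant on slightly shrunken faces.

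For part (a), I would proceed as follows. Fix $c>0$. First produce a convex function that agrees with $\varphi$ on $\{\varphi \ge c\}$ but is smoothed near the origin (e.g.\ replace $\varphi$ on $\{\varphi < c/2\}$ by a smooth strictly convex function matching to first order, or simply work with $\varphi$ itself and only care about the region $\{\varphi \ge c\}$). Next, mollify: set $f_{c,n} = \varphi * \rho_n$ with $\rho_n$ a standard mollifier of width $1/n$. Then $f_{c,n}$ is smooth and convex, and $f_{c,n} \to \varphi$ uniformly on $\{\varphi \ge c\}$. The crucial point is the inclusion \eqref{E: infinity harmonic}: one must check that for $q$ with $\varphi(q)\ge c$, $D f_{c,n}(q)$ lies on $\{\varphi^{*}=1\}$ and $D^{2}f_{c,n}(q) \in \mathcal{S}_{\mathcal{T}(Df_{c,n}(q))}$. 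Since $Df_{c,n}(q) = \int D\varphi(q - y)\rho_n(y)\,dy$ is an average of unit vectors for $\varphi^{*}$ (each $D\varphi(q')$ lies on $\{\varphi^{*}=1\}$), and since $\{\varphi^{*}=1\}$ is \emph{piecewise flat}, this average stays on the same flat face $F$ provided all the points $q-y$ in the support of $\rho_n$ activate the same vertex $p_i$ of $\{\varphi\le 1\}$, i.e.\ provided $q$ is in the interior of the corresponding conical region at distance $\ge 1/n$ from the ``bad'' directions. For such $q$, $Df_{c,n}(q)$ lies in the relative interior of $F$, so $\mathcal{T}(Df_{c,n}(q)) = \langle F\rangle^{\perp}$ where $\langle F\rangle$ is the linear span of directions of $F$; and $D^{2}f_{c,n}(q)$ maps everything into $\langle F - F\rangle = \mathcal{T}(Df_{c,n}(q))^{\perp}$... wait, one has to be careful: the image of $D^2 f_{c,n}(q)$ should lie in $\mathcal{T}(Df_{c,n}(q))$, not its complement. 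This is where the structure genuinely matters and I expect to set it up by differentiating under the integral sign and using that $D\varphi$ is \emph{constant} (hence has vanishing differential) along the interior of each conical region, so that $D^2 f_{c,n}(q)$ picks up contributions only from the boundary layers of these regions inside $\mathrm{supp}\,\rho_n(q-\cdot)$, and a careful bookkeeping of which directions those layers move in shows the image lands in the tangent space $\mathcal{T}$. The subtlety is handling $q$ that are \emph{within} distance $1/n$ of the skeleton --- but those form a set shrinking to measure zero, and more importantly the statement only requires \eqref{E: infinity harmonic} on all of $\{\varphi \ge c\}$, so I will need either to replace $\varphi$ by a suitable piecewise-modified convex function whose level set directions are constant in a full neighborhood of each face (enlarging the ``good'' regions to cover everything after a controlled perturbation), or to use a non-uniform mollification adapted to the distance to the skeleton. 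I expect this geometric bookkeeping --- ensuring $Df_{c,n}$ lands in the relative interior of a flat face for \emph{every} $q$ with $\varphi(q)\ge c$, not just generic ones --- to be the main obstacle.

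For part (b), given part (a) I would glue: the functions $f_{c,n}$ for a diagonal sequence $c = c_n \to 0$, suitably modified near the origin to be positively one-homogeneous (pass from $f_{c,n}$ to its one-homogeneous ``conification'' $q \mapsto \varphi(q)\, f_{c,n}(q/\varphi(q))$ or similar, or directly build $\varphi_n$ by regularizing $\varphi$ while preserving homogeneity via $\varphi_n(q) = \big(\int \varphi(q + \varepsilon_n|q| y)^{2}\rho(y)\,dy\big)^{1/2}$-type averages), yield a sequence of $C^{2}$ Finsler norms $\varphi_n \to \varphi$ locally uniformly with $D^{2}\varphi_n(q) \in \mathcal{S}(D\varphi_n(q))$ for all $q \ne 0$. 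By Theorem \ref{T: C2 norms}, each $\varphi_n$ is a shielding norm for itself; and the condition $D^2\varphi_n(q)\in\mathcal{S}(D\varphi_n(q),\varphi)$ --- note this is with respect to the \emph{original} $\varphi$, which is what a shielding norm for $\varphi$ requires --- follows because the tangent spaces $\mathcal{T}(D\varphi_n(q),\varphi)$ contain $\mathcal{T}(D\varphi_n(q),\varphi_n)$: indeed if $D\varphi_n(q)$ lies in the relative interior of a face of $\{\varphi^*\le 1\}$ then $\partial\varphi^*(D\varphi_n(q))$ is that whole face, which has \emph{larger} dimension than for nearby strictly convex directions, so the compatibility constraint is automatically met. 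Finally, to get an honest shielding norm for $\varphi$ (a \emph{single} norm, not a sequence), I note that $\varphi_n$ for $n$ large is itself $C^2$ and satisfies $D^2\varphi_n(q)\in\mathcal{S}(D\varphi_n(q),\varphi)$ by the preceding remark, so $\varphi_n$ is a shielding norm for $\varphi$; the "Moreover" part of Theorem \ref{T: polyhedral norms} (the approximation $\psi_\delta$) then just records that $\varphi_n \to \varphi$ uniformly on $S^{d-1}$. The one point requiring care here is that the homogenization/conification step must not destroy the inclusion from part (a); I would handle this by doing the mollification in a scale-invariant way from the start (mollifying on $S^{d-1}$ or using the homogeneous averaging formula above) so that homogeneity is automatic and the face-preservation argument of part (a) goes through verbatim in each dyadic annulus.
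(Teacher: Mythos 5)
Your starting point for part (a) --- mollifying $\varphi$ --- is the paper's approach, and you correctly identify the central difficulty: for $q$ within distance $1/n$ of the piecewise-linear skeleton, the mollified gradient $Df_{c,n}(q)$ averages vertices from different pieces of $\{\varphi=1\}$, and you need to know this average still lands on $\{\varphi^*=1\}$ with $D^2 f_{c,n}(q)$ in the right matrix subspace. But you do not resolve this; you float two workarounds (modifying $\varphi$ near the skeleton, or non-uniform mollification adapted to the distance to the skeleton) without executing either. The paper instead proves a discrete-geometry lemma using Durier's Diff-Max property of polyhedral norms: for each $c>0$ there is $\epsilon_c>0$ so that whenever $q\in\{\varphi\geq c\}$ and $\epsilon<\epsilon_c$, all the linear pieces $M_i$ meeting $B(q,\epsilon)$ have a common point. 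This forces the vertices $p_i$ contributing to $Df_\epsilon(q)$ to lie on a single face $F$ of $\{\varphi^*\leq 1\}$, hence $Df_\epsilon(q)\in\mathrm{ri}(F)$ and thus $\varphi^*(Df_\epsilon(q))=1$ for \emph{every} $q\in\{\varphi\geq c\}$, not just generic ones. This structural fact about polyhedral norms is the key missing ingredient. Relatedly, your identification $\mathcal{T}(Df_{c,n}(q))=\langle F\rangle^\perp$ is not correct: $\mathcal{T}(p,\varphi)=\partial\varphi^*(p)^\perp$, and $\partial\varphi^*(p)$ is a face of the \emph{primal} ball $\{\varphi\leq 1\}$, not the dual face $F$. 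What one actually needs is that $F-F\subseteq\mathcal{T}(Df_\epsilon(q),\varphi)$ (because the support functional of $F$ lies in $\partial\varphi^*$), so that $D^2 f_\epsilon(q)$, being a nonnegative combination of $(p_i-p_j)\otimes(p_i-p_j)$ with $p_i,p_j\in F$, lies in $\mathcal{S}_{\mathcal{T}(Df_\epsilon(q))}$; you noticed you were confused on this point but did not untangle it.

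For part (b) the gap is more serious. The paper defines $\varphi_n$ as the gauge of $E_n=\{f_n\leq 1\}$, then uses that $\{\varphi_n=1\}=\{f_n=1\}$ so the second fundamental forms coincide, and a short computation shows that conjugating $D^2 f_n(q)\in\mathcal{S}_{\mathcal{T}(Df_n(q))}$ by the tangential projection $\mathrm{Id}-\widehat{Df_n}(q)\otimes\widehat{Df_n}(q)$ lands in $\mathcal{S}(Df_n(q),\varphi)=\mathcal{S}(D\varphi_n(q),\varphi)$ (the last equality because $D\varphi_n(q)\parallel Df_n(q)$). Your proposed conification $q\mapsto\varphi(q)f_{c,n}(q/\varphi(q))$ is one-homogeneous but there is no reason it should be convex or $C^2$ (it mixes the nonsmooth $\varphi$ with $f_{c,n}$). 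More importantly, your argument that the compatibility constraint with the \emph{original} $\varphi$ ``is automatically met'' because $\mathcal{T}(D\varphi_n(q),\varphi)\supseteq\mathcal{T}(D\varphi_n(q),\varphi_n)$ is backwards: since $\varphi_n^*$ is smooth, $\partial\varphi_n^*(D\varphi_n(q))$ is a singleton while $\partial\varphi^*(D\varphi_n(q))$ is a (possibly higher-dimensional) face of $\{\varphi\leq1\}$, so the containment between the $\mathcal{T}$-spaces goes the \emph{other} way and the constraint $D^2\varphi_n(q)\in\mathcal{S}(D\varphi_n(q),\varphi)$ is \emph{stronger}, not weaker. So the inclusion you need does not come for free; it requires the explicit second-fundamental-form argument from part (a).

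Net: right general route (mollification plus conversion to a norm), but the two load-bearing steps --- the uniform control of $Df_\epsilon$ near the skeleton via the polyhedral structure, and the derivation of the compatibility inclusion for the gauge of the sublevel set --- are either left open or argued incorrectly.
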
  

Notice that part (b) of the proposition implies Theorem \ref{T: polyhedral norms}.  While we are mainly interested in these shielding norms, the convex functions of part (a) are of independent interest.  
\vs
In the next part of the current section, we show how the convex functions of part (a) can be used to provide a quick proof that the cone comparison properties of the $L^{\infty}-$ variational theory are equivalent to the viscosity $\varphi$-subharmonic and $\varphi$-superharmonic properties in this context.  The remainder of the section is devoted to the proof of Proposition \ref{P: polyhedral case}.  We begin by constructing the functions $(f_{c,n})_{n \in \mathbb{N}}$ and then use these to obtain $(\varphi_{n})_{n \in \mathbb{N}}$.

\subsection*{Application of part (a): Cone comparison} \label{S: gimmick cone comparison}  The Dirichlet problem for the Finsler infinity Laplacian, which is one of the basic problems of the theory of $L^{\infty}-$\\ 
variational problems, is the PDE
	\begin{equation} \label{E: infinity harmonic equation}
- \langle D^{2} u \cdot \partial \varphi^{*}(Du), \partial \varphi^{*}(Du) \rangle = 0 \ \  \text{in}  \ \   \Omega \ \ \text{and} \ \ u = g \ \ 
 \text{on} \ \ \partial \Omega.
\end{equation}	
In  \cite{armstrong_smart_easy_proof} Armstrong and Smart showed that it is relatively easy to prove comparison results for variational sub- and super-solutions of \eqref{E: infinity harmonic equation}.  As pointed out in \cite{armstrong_crandall_julin_smart}, this begs the question whether or not viscosity sub- and super-solutions are equivalent to the variational ones.  
\vs
We recall one definition of variational sub- and super-solutions, which is based on ``comparison with cones."     

\begin{definition}  Given an open set $\Omega \subseteq \mathbb{R}^{d}$ and a Finsler norm $\varphi$ in $\mathbb{R}^{d}$, we say that $w \in \text{USC}(\Omega)$ has the cone comparison property from above in $\Omega$, denoted $w \in CCA_{\varphi}(\Omega)$, if, for each $V \subset \subset \Omega$ open, $x_{0} \in \mathbb{R}^{d} \setminus V$, and $a > 0$, 
	\begin{equation*}
		\max \left\{ w(x) - a \varphi(x - x_{0}) \, :  \, x \in V \right\} = \max \left\{ w(x) - a \varphi(x - x_{0}) \, :  \, x \in \partial V \right\}.
	\end{equation*}
We say $v \in \text{LSC}(\Omega)$ has the cone comparison property from below in $\Omega$, denoted $v \in CCB_{\varphi}(\Omega)$, if $w = -v \in CCA_{\varphi}(\Omega)$\end{definition}

Next we  show that,  using the  smooth $\varphi$-infinity harmonic approximations of $\varphi$ obtained in Proposition \ref{P: polyhedral case}, it is easy to deduce that $CCA_{\varphi}(\Omega)$ and $CCB_{\varphi}(\Omega)$ coincide with the $\varphi$-infinity sub- and $\varphi$-infinity super-harmonic functions.

\begin{prop} \label{P: easy cones} If $\varphi$ is a polyhedral Finsler norm in $\mathbb{R}^{d}$, then
	\begin{itemize}
		\item[(i)] $w \in CCA_{\varphi}(\Omega)$ if and only if $-\langle D^{2} w \cdot \partial \varphi^{*}(Dw), \partial \varphi^{*}(Dw) \rangle \leq 0$ in $\Omega$.
		\vskip.05in
		\item[(ii)] $v \in CCB_{\varphi}(\Omega)$ if and only if $-\langle D^{2} v \cdot \partial \varphi^{*}(Dv), \partial \varphi^{*}(Dv) \rangle \geq 0$ in $\Omega$.
	\end{itemize} 
\end{prop}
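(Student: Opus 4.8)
\textbf{Proof proposal for Proposition \ref{P: easy cones}.}

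The plan is to prove the two equivalences by reducing everything to a single sharp comparison statement: for a polyhedral Finsler norm $\varphi$, a function $w \in \text{USC}(\Omega)$ satisfies $-\langle D^2 w \cdot \partial\varphi^*(Dw), \partial\varphi^*(Dw)\rangle \le 0$ in the viscosity sense if and only if $w$ obeys the maximum principle against all ``cone functions'' $x \mapsto a\varphi(x - x_0)$ over compactly contained subdomains not containing $x_0$. Part (ii) then follows from part (i) by replacing $v$ with $-v$ and using the symmetry of the subequation/superequation formulation, so the real work is entirely in part (i).

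For the direction $CCA_\varphi(\Omega) \Rightarrow$ viscosity subsolution: I would argue by contradiction. Suppose $\phi \in C^2$ touches $w$ from above at $x_1 \in \Omega$ with $-\langle D^2\phi(x_1) q, q\rangle > 0$ for some $q \in \partial\varphi^*(D\phi(x_1))$; using the representation \eqref{E: subdifferential} and the homogeneity of $\varphi$, I want to produce a cone $a\varphi(\cdot - x_0)$ lying just above $\phi$ near $x_1$ but touched from below, contradicting the cone comparison inequality on a small ball. This is the standard ``cones are the building blocks'' computation; the slight subtlety here is that $\varphi$ is only piecewise linear, so I would choose $x_0$ along the ray through $x_1$ in the direction dual to $D\phi(x_1)$ so that $\varphi(x - x_0)$ is \emph{smooth} near $x_1$ (it is linear on the relevant cone of directions), reducing to the classical Euclidean-type argument.

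For the converse, viscosity subsolution $\Rightarrow CCA_\varphi(\Omega)$: this is where part (a) of Proposition \ref{P: polyhedral case} does the heavy lifting, and I expect it to be the main obstacle to state cleanly. Fix $V \subset\subset \Omega$, $x_0 \notin V$, $a > 0$, and let $c > 0$ be small enough that $\varphi(x - x_0) \ge c$ for all $x \in \overline V$ (possible since $x_0 \notin \overline V$ after shrinking, or one enlarges $V$ slightly). Apply part (a) to get smooth convex $\varphi$-infinity harmonic approximants $f_{c,n} \to \varphi$ uniformly on $\{\varphi \ge c\}$ with $\partial\varphi^*(Df_{c,n}) \subseteq \text{Ker}\, D^2 f_{c,n}$; then $x \mapsto a f_{c,n}(x - x_0) + \varepsilon$ is, for each $n$ and $\varepsilon > 0$, a smooth strict supersolution in a neighborhood of $\overline V$ in the sense that $-\langle D^2(\cdot) q, q\rangle = 0$ identically along $q \in \partial\varphi^*(D(\cdot))$ — in fact, since the nonlinearity vanishes exactly on these functions, I should perturb by a term like $\varepsilon\|x\|^2$ or use an additive constant together with strictness coming from $f_{c,n} \ne \varphi$. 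The point is that $w - a f_{c,n}(\cdot - x_0)$ is a viscosity subsolution of a nondegenerate-enough equation that the classical maximum principle (or Proposition \ref{P: main elliptic} applied with the continuous pair $(\overline G_\varphi, \underline G^\varphi)$ from \eqref{E: infinity laplace}, which is legitimate since these are compatible with $\varphi$ by Proposition \ref{P: infinity laplace} and $\varphi$ admits a shielding norm by Theorem \ref{T: polyhedral norms}) forces $\max_{\overline V}(w - a f_{c,n}(\cdot - x_0)) = \max_{\partial V}(w - a f_{c,n}(\cdot - x_0))$; letting $n \to \infty$ and using uniform convergence $f_{c,n} \to \varphi$ on $\{\varphi \ge c\} \supseteq \overline V - x_0$ gives the cone comparison identity. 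The delicate bookkeeping is making the ``strictness'' rigorous so that Proposition \ref{P: main elliptic} genuinely applies — i.e., arranging a positive right-hand side $f$ — which I would handle by replacing $a$ with $a(1+\eta)$ and exploiting that $f_{c,n}(\cdot - x_0)$ is a genuine supersolution while $\eta a \varphi(\cdot - x_0)$ contributes the needed gap, then sending $\eta \to 0^+$ at the end.

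Finally, part (ii): if $v \in CCB_\varphi(\Omega)$ then $-v \in CCA_\varphi(\Omega)$, so by part (i) $-v$ is a viscosity subsolution of $-\langle D^2(\cdot)\cdot \partial\varphi^*(D(\cdot)), \partial\varphi^*(D(\cdot))\rangle \le 0$, which is exactly the statement that $v$ is a viscosity supersolution $-\langle D^2 v \cdot \partial\varphi^*(Dv), \partial\varphi^*(Dv)\rangle \ge 0$, since the pair $(\overline G_\varphi, \underline G^\varphi)$ satisfies $\overline G_\varphi(p, -X) = -\underline G^\varphi(p, X)$; and conversely. This last symmetry is immediate from the definitions in \eqref{E: infinity laplace}, so part (ii) costs nothing once part (i) is in hand.
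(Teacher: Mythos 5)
Your high-level plan matches the paper's: the forward direction is cited as classical, part (ii) is reduced to (i) via $v \mapsto -v$, and the real content is using the smooth $\varphi$-infinity harmonic approximants $f_{c,n}$ from Proposition \ref{P: polyhedral case}(a) to replace the cone, then perturbing to a strict supersolution so that the subsolution property of $w$ forces the boundary maximum. That structural alignment is correct, and you rightly flag the strictness step as the delicate point.

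However, there is a genuine gap exactly there, and none of your three candidate perturbations would close it. Adding $\varepsilon\|x\|^{2}$ has the wrong sign: it makes $-\langle D^{2}(af_{c,n}(\cdot-x_{0}) + \varepsilon\|\cdot\|^{2})\,q,q\rangle = -2\varepsilon\|q\|^{2} < 0$, i.e.\ a strict \emph{sub}solution, not a supersolution. Adding a constant does nothing to the Hessian. Replacing $a$ by $a(1+\eta)$ is invisible to the zero-homogeneous, gradient-degenerate infinity-Laplace operator, since scaling a supersolution by a positive constant gives another (non-strict) supersolution. The device the paper actually uses is
\begin{equation*}
g_{N,\epsilon}(x) = f_{c,N}(x - x_{0}) - \epsilon\, f_{c,N}(x - x_{0})^{2},
\end{equation*}
and it works precisely because of the eikonal identity $\varphi^{*}(Df_{c,n}) \equiv 1$ built into \eqref{E: infinity harmonic}: since $Dg_{N,\epsilon} = (1 - 2\epsilon f_{c,N})Df_{c,N}$ is a positive multiple of $Df_{c,N}$, the subdifferential set $\partial\varphi^{*}(Dg_{N,\epsilon})$ is unchanged, and for $q$ in it one computes $-\langle D^{2}g_{N,\epsilon}\,q,q\rangle = 2\epsilon\langle Df_{c,N},q\rangle^{2} = 2\epsilon\,\varphi^{*}(Df_{c,N})^{2} = 2\epsilon > 0$. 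Without using that $\varphi^{*}(Df_{c,n}) = 1$, you do not get a uniformly positive lower bound. A secondary, smaller point: once you have a smooth strict supersolution, you do not need the heavy machinery of Proposition \ref{P: main elliptic}; the viscosity-subsolution definition applied directly to the $C^{2}$ test function $a\,g_{N,\epsilon}(\cdot-x_{0})$ at an interior maximum gives the contradiction, so invoking the abstract comparison theorem (and with it shielding norms, compatibility, etc.) is unnecessary overhead here.
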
  

	\begin{proof}  The ``only if" direction is classical; see, for example, \cite{aronsson_crandall_juutinen} or \cite[Theorem 4.8]{armstrong_crandall_julin_smart}.
\vs	
We only prove  the ``if" direction for  (i), since (ii) follows by applying (i) to $-v$.

\vs	
	We argue by contradiction.  Assume that $w$ is $\varphi$-infinity subharmonic in $\Omega$ and yet there is a $V \subset \subset \Omega$, an $x_{0} \in \mathbb{R}^{d} \setminus V$, and an $a > 0$ such that 
		\begin{equation} \label{E: contradiction time}
			\max \left\{ w(x) - a \varphi(x - x_{0}) \, :  \, x \in V \right\} > \max \left\{ w(y) - a \varphi(y - x_{0}) \, :  \, y \in \partial V \right\}.
		\end{equation}
	
	To start with, observe that there is a $c > 0$ such that $\varphi(x - x_{0}) \geq 2c$ for all $x \in V$.  
	
	Let $(f_{c,n})_{n \in \mathbb{N}}$ be the smooth convex functions of Proposition \ref{P: polyhedral case}, (a).  Recall from the proposition that these are $\varphi$-infinity harmonic in $V$ and $f_{c,n}(\cdot - x_{0}) \to \varphi(\cdot - x_{0})$ uniformly in $V$ as $n \to \infty$.  
\vs	
	By the claimed convergence and \eqref{E: contradiction time}, we can find an $N \in \mathbb{N}$ large so that 
		\begin{equation} \label{E: contradiction closer}
			\max \{ w(x) - a f_{c,N}(x - x_{0}) \, :  \, x \in V\} > \max \{ w(y) - a f_{c,N}(y - x_{0}) \, :  \, y \in \partial V \}.
		\end{equation}
	At the same time, if we define $g_{N,\epsilon}$ by 
		\begin{equation*}
			g_{N,\epsilon}(x) = f_{c,N}(x - x_{0}) - \epsilon f_{c,N}(x- x_{0})^{2},
		\end{equation*}
	then, as $\epsilon \to 0^{+}$ and  uniformly in $V$,  $g_{N,\epsilon} \to f_{c,N}$.
	 Furthermore, $g_{N,\epsilon}$ is strictly $\varphi$-superharmonic.  Indeed, in $V - x_{0}$, we compute
		\begin{align*}
			- \langle D^{2} g_{N,\epsilon} \cdot \partial \varphi^{*}(D g_{N,\epsilon}), \partial \varphi^{*}(Dg_{N,\epsilon}) \rangle &= - (1 - 2 \epsilon f_{c,N}) \langle D^{2} f_{c,N} \cdot \partial \varphi^{*}(Df_{c,N}), \partial \varphi^{*}(Df_{c,N}) \rangle \\
			&\quad + 2\epsilon \langle Df_{c,N}, \partial \varphi^{*}(Df_{c,N}) \rangle^{2} \\
			&= 2 \epsilon \varphi^{*}(Df_{c,N})^{2} \geq 2 \epsilon.
		\end{align*}
	
	Since $a g_{N,\epsilon}(\cdot - x_{0})$ is a smooth strict supersolution for $\epsilon > 0$ small enough, we must have
		\begin{equation*}
			\max \left\{ w(x) - a g_{N,\epsilon}(x - x_{0}) \, :  \, x \in V \right\} \leq \max\left\{ w(y) - a g_{N,\epsilon}(y - x_{0}) \, :  \, y \in \partial V \right\},
		\end{equation*}
	which in the $\epsilon \to 0^{+}$ limit, contradicts \eqref{E: contradiction closer}.  \end{proof}  
	
Notice that the proof of Proposition \ref{P: easy cones} also works if $\varphi \in C^{2}(\mathbb{R}^{d} \setminus \{0\})$.  In that case, since $\varphi$ is already smooth enough, we can rerun the proof using $f_{c,n} \equiv \varphi$ independent of $c$ and $n$.  In fact, Proposition \ref{P: easy cones} holds in the generality of norms satisfying \eqref{A: running assumptions}.  This will be proved in Section \ref{S: variational}.

\subsection*{Smooth infinity harmonic approximations}  Here, we prove the first part of Proposition \ref{P: polyhedral case}.  
\vs
Since $\varphi$ is polyhedral, there is $N \in \mathbb{N}$ and vectors $p_{1},\dots,p_{N} \in \mathbb{R}^{d} \setminus \{0\}$ such that
	\begin{equation*}
		\varphi(q) = \max \left\{ \langle q, p_{1} \rangle, \dots, \langle q, p_{N} \rangle \right\}.
	\end{equation*}
Using Proposition \ref{P: canonical choice}, we can  assume that $\{p_{1},\dots,p_{N}\}$ is the set of extreme points of the dual ball $\{\varphi^{*} \leq 1\}$.
\vs
For each $i \in \{1,2,\dots,N\}$, let
	\begin{equation*}
		M_{i} = \left\{q \in \mathbb{R}^{d} \, :  \, \varphi(q) = \langle q, p_{i} \rangle \right\}.
	\end{equation*}
Notice that $\mathbb{R}^{d} = M_{1} \cup \dots \cup M_{N}$, and the relative interiors $\{\text{ri}(M_{1}),\dots,\text{ri}(M_{N})\}$ are non-empty and pairwise disjoint.
\vs
We will utilize the following fact.

	\begin{lemma} \label{L: discrete geometry}  For each $c > 0$, there is  $\epsilon_{c} > 0$ such that, if $q \in \{\varphi \geq c\}$ and $\mathcal{I}(q,\epsilon) = \{i \in \{1,2,\dots,N\} \, :  \, B(q,\epsilon) \cap M_{i} \neq \emptyset\}$, then
		\begin{equation*}
			\bigcap_{i \in \mathcal{I}(q,\epsilon)} M_{i} \neq \emptyset \ \  \text{if} \ \ \epsilon \in (0,\epsilon_{c}).
		\end{equation*}
	\end{lemma}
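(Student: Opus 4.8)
The plan is to argue by contradiction and extract a contradiction from a compactness/finiteness argument, exploiting the finitely many faces $M_1,\dots,M_N$ of the polyhedral norm. First I would record the key geometric structure: each $M_i = \{q : \varphi(q) = \langle q, p_i\rangle\}$ is a closed convex cone (the normal cone data of the polytope $\{\varphi^* \le 1\}$ read off from its vertex $p_i$), and $\mathbb{R}^d = M_1 \cup \dots \cup M_N$. The crucial combinatorial observation is that for a polyhedral fan of this type, a collection of the $M_i$ has a common point as soon as every pair does, or — more robustly for our purposes — the intersection $\bigcap_{i \in \mathcal{I}} M_i$ is nonempty whenever $\bigcap_{i \in \mathcal{I}} M_i$ contains a point \emph{on the unit sphere scaled to level $\{\varphi = c\}$}. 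The natural way to capture this is: if a family of faces $M_i$ all meet a sufficiently small ball around a fixed $q$ with $\varphi(q) \ge c$, then $q$ must actually lie in the closure of the relative interiors near the lowest-dimensional face through $q$, forcing all those $M_i$ to contain that lowest-dimensional face, hence to intersect.

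The cleanest route I would take is the following. Suppose the conclusion fails for some $c > 0$. Then there are sequences $\epsilon_k \to 0^+$ and points $q_k \in \{\varphi \ge c\}$ with $\bigcap_{i \in \mathcal{I}(q_k,\epsilon_k)} M_i = \emptyset$. Since $\{\varphi \ge c\}$ is closed and the $q_k$ can be taken bounded (rescale: $\mathcal{I}(q,\epsilon)$ and the conclusion are invariant under $q \mapsto \lambda q$, $\epsilon \mapsto \lambda \epsilon$ for $\lambda > 0$, so we may assume $\|q_k\| = 1$, using that $\{\varphi = c\}$ meets every ray and $\varphi$ is comparable to $\|\cdot\|$), pass to a subsequence with $q_k \to q_\infty$, $\varphi(q_\infty) \ge c > 0$. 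Because there are only finitely many index sets, pass to a further subsequence so that $\mathcal{I}(q_k,\epsilon_k) \equiv \mathcal{I}$ is constant. For each $i \in \mathcal{I}$ pick $z_{k,i} \in B(q_k,\epsilon_k) \cap M_i$; then $z_{k,i} \to q_\infty$, and since each $M_i$ is closed, $q_\infty \in M_i$. Hence $q_\infty \in \bigcap_{i \in \mathcal{I}} M_i$, so that intersection is nonempty. But $\mathcal{I}(q_k,\epsilon_k) \supseteq \mathcal{I}(q_\infty,\epsilon_k')$ need not hold directly, so the final step is to show $\mathcal{I} \supseteq \mathcal{I}(q_k,\epsilon_k)$ is compatible with $\bigcap_{i\in\mathcal{I}} M_i \ne \emptyset$ contradicting $\bigcap_{i \in \mathcal{I}(q_k,\epsilon_k)} M_i = \emptyset$ — which is immediate once $\mathcal{I}(q_k,\epsilon_k) = \mathcal{I}$. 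So the contradiction is already in hand: $\emptyset = \bigcap_{i \in \mathcal{I}(q_k,\epsilon_k)} M_i = \bigcap_{i \in \mathcal{I}} M_i \ni q_\infty$.

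The step I expect to require the most care is the reduction to bounded $q_k$, i.e.\ the scaling invariance: one must check that $\mathcal{I}(\lambda q, \lambda \epsilon) = \mathcal{I}(q,\epsilon)$, which follows because each $M_i$ is a cone so $B(\lambda q, \lambda\epsilon)\cap M_i = \lambda(B(q,\epsilon)\cap M_i)$, and that $\{\varphi \ge c\} \cap \{\|\cdot\| = 1\}$ may be empty — so really one rescales each $q_k$ to have $\varphi(q_k) = c$ (possible since $\varphi$ is positively homogeneous and positive on $\{q_k/\|q_k\|\}$), landing in the compact set $\{\varphi = c\}$. With that in place, the continuity/closedness argument for $M_i$ and the finiteness of the number of possible index sets $\mathcal{I} \subseteq \{1,\dots,N\}$ do all the remaining work, and no quantitative estimate on $\epsilon_c$ is needed beyond its existence. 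An entirely equivalent non-contradiction phrasing: define $\epsilon_c$ as the infimum, over the compact set $\{\varphi = c\}$, of the largest $\epsilon$ that works at each point, and show this infimum is positive by lower semicontinuity of that pointwise quantity; I would present whichever is shorter, but the contradiction form above is self-contained and short.
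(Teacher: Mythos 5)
Your proof is correct, and it takes a genuinely different route from the paper. The paper appeals to Durier's Diff-Max property: at every $q$ there is a radius $\epsilon_q$ so that $\partial\varphi(q') \subseteq \partial\varphi(q)$ for $q' \in B(q,\epsilon_q)$ (equivalently, $q \in M_i$ whenever $M_i$ meets $B(q,\epsilon_q)$), and then covers the compact set $\{\varphi = c\}$ by finitely many such balls to produce a uniform radius. Your argument avoids Durier entirely and works by contradiction plus compactness: assuming no uniform $\epsilon_c$ exists, you extract a sequence $q_k \in \{\varphi \ge c\}$ and $\epsilon_k \to 0$ with $\bigcap_{i\in\mathcal{I}(q_k,\epsilon_k)}M_i = \emptyset$, rescale (using that each $M_i$ is a closed cone, so $\mathcal{I}(\lambda q, \lambda\epsilon) = \mathcal{I}(q,\epsilon)$, and $\lambda_k = c/\varphi(q_k) \le 1$ keeps $\lambda_k\epsilon_k \to 0$) so that $q_k$ lies on the compact level set $\{\varphi = c\}$, pass to a subsequence where $q_k \to q_\infty$ and $\mathcal{I}(q_k,\epsilon_k)$ is a constant index set $\mathcal{I}$ (finitely many subsets of $\{1,\dots,N\}$), and observe that for each $i \in \mathcal{I}$ the witnesses $z_{k,i} \in B(q_k,\epsilon_k)\cap M_i$ converge to $q_\infty$, which, by closedness of each $M_i$, puts $q_\infty \in \bigcap_{i\in\mathcal{I}}M_i$, a contradiction. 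The paper's route buys a direct, quantitative-flavored finite cover and connects the lemma to a known characterization of polyhedral norms; yours is more elementary and self-contained, closer to the ``by hand'' proof the paper alludes to but does not write out. Both are valid; yours is arguably shorter once the scaling invariance is stated precisely, which you do.
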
  
	
\begin{proof} Although it is not hard to prove the claim ``by hand,"  we give here an alternate proof based on Durier's so-called Diff-Max property \cite{durier}, which is ``cleaner'' and  instructive since that property characterizes polyhedral Finsler norms.
\vs

A Finsler norm $\varphi : \mathbb{R}^{d} \to [0,\infty)$ is said to possess the Diff-Max property if, for each $q \in \mathbb{R}^{d}$, there is  $\epsilon_{q} > 0$ such that $\partial \varphi(q') \subseteq \partial \varphi(q)$ for all $q' \in B(q,\epsilon_{q})$.  An easy contradiction argument involving the upper semi-continuity of the subdifferential establishes that any polyhedral Finsler norm has the Diff-Max property --- see \cite[Theorem 5.1]{durier} for this and the converse.
\vs

Fix $c > 0$. Since  $\{\varphi = c\}$ is compact,  we can fix $\{q_{1},\dots,q_{N}\} \subseteq \{\varphi = c\}$ and $\{\epsilon_{1},\dots,\epsilon_{N}\} \subseteq (0,\infty)$, with $\epsilon_{i} = \epsilon_{q_{i}}$ the radius from the Diff-Max property, such that 
	\begin{equation*}
		\{\varphi = c\} \subseteq \bigcup_{i = 1}^{N} B(q_{i},\epsilon_{i}/2).
	\end{equation*}
Let $\epsilon' = \min\{\epsilon_{1},\dots,\epsilon_{N}\}$.  If $q \in \{\varphi = c\}$, then there is $i\in\{1,\ldots,d\}$ so that $B(q,\epsilon'/2) \subseteq B(q_{i},\epsilon_{i})$.  Hence, if $\epsilon \in (0,\epsilon'/2]$ and $j \in \mathcal{I}(q,\epsilon)$, then 
	\begin{equation*}
		\bigcap_{i \in \mathcal{I}(q,\epsilon)} M_{i} \supseteq \bigcap_{i \in \mathcal{I}(q_{i},\epsilon_{i})} M_{i} \neq \emptyset.
	\end{equation*}

Alternatively, if $\varphi(q) \geq c$, then 
	\begin{equation*}
		\bar{q} = \frac{c q}{\varphi(q)} \in \{\varphi = c\} \quad \text{and} \quad \mathcal{I}(q,\epsilon'/2) = \mathcal{I}(\bar{q}, (2\varphi(q))^{-1} (c\epsilon')).
	\end{equation*} 
Since $(2\varphi(q))^{-1} (c\epsilon') \leq \epsilon'/2$, the previous case implies that $\bigcap_{i \in \mathcal{I}(q,\epsilon')} M_{i} \neq \emptyset$. \end{proof}  
 
 We are now prepared to prove the first part of Proposition \ref{P: polyhedral case}. 

\begin{proof}[Proof of Proposition \ref{P: polyhedral case}, part (a)]  Given $\epsilon > 0$, define $f_{\epsilon} = \varphi * \eta_{\epsilon}$, where $(\eta_{\epsilon})_{\epsilon > 0}$ is a standard mollifying family, that is, $\eta_{\epsilon}(x) = \epsilon^{-d} \eta(\epsilon^{-1}x)$ with $\eta \in C^{\infty}_{c}(\mathbb{R}^{d})$  a nonnegative, even function such that $\int_{\mathbb{R}^{d}} \eta(x) \, dx = 1$.
\vs
In what follows, we fix $c > 0$ and $\epsilon \in (0,\epsilon_{c})$ with $\epsilon_{c}$ the constant from Lemma \ref{L: discrete geometry}. 

Notice that $D^{2} \varphi$ is the Radon measure given by 
	\begin{equation*}
		D^{2}\varphi(dq) = \frac{1}{2} \sum_{i = 1}^{N} \sum_{j \neq i} \|p_{i} - p_{j}\|^{-1} (p_{i} - p_{j}) \otimes (p_{i} - p_{j}) \mathcal{H}^{d-1} \restriction_{M(i) \cap M(j)}(dq).
	\end{equation*}
From this, we deduce that
	\begin{equation} \label{E: second derivative}
		D^{2} f_{\epsilon}(q) = \frac{1}{2} \sum_{i = 1}^{N} \sum_{j \neq i} \int_{M_{i} \cap M_{j}} \eta_{\epsilon}(q - q') \, \mathcal{H}^{d-1}(dq') \cdot \|p_{i} - p_{j}\|^{-1} (p_{i} - p_{j}) \otimes (p_{i} - p_{j}).
	\end{equation}
Similarly, we can write
	\begin{equation*}
		D\varphi(q) = \sum_{i = 1}^{N} p_{i} \chi_{M_{i}}(q),
	\end{equation*}
and, thus,
	\begin{equation*}
		Df_{\epsilon}(q) = \sum_{i = 1}^{N} \int_{M_{i}} \eta_{\epsilon}(q - q') \, dq' \cdot p_{i} 
	\end{equation*}
	
Fix $q \in \{\varphi \geq c\}$ and let $\tilde{\mathcal{I}} = \{i \in \{1,2,\dots,N\} \, :  \, \int_{M_{i}} \eta_{\epsilon}(q - q') \, dq' > 0\}$.  Notice that, in the notation of Lemma \ref{L: discrete geometry}, we have $\tilde{\mathcal{I}} \subseteq \mathcal{I}(q,\epsilon)$.  From this and Lemma \ref{L: discrete geometry}, it follows that 
	\begin{equation*}  
		\bigcap_{i \in \tilde{\mathcal{I}}} M_{i} \neq \emptyset.
	\end{equation*}
\vs
Up to renumbering, we can assume that $\{1,2,\dots,M\} = \tilde{\mathcal{I}}$ for some $M \leq N$.  The one-homogeneity of $\varphi$ implies that there is a $\bar{q} \in \{\varphi = 1\}$ such that	
	\begin{equation} \label{E: face found}
		\{p_{1},\dots,p_{M}\} \subseteq \{p \in \{\varphi^{*} \leq 1\} \, :  \, \langle p, \bar{q} \rangle = 1\},
	\end{equation}
and, hence, $\text{conv}(\{p_{1},\dots,p_{M}\}) \subseteq \{\varphi^{*} = 1\}$.  
\vs
Let $F$ be the smallest face of $\{\varphi^{*} \leq 1\}$ containing $\text{conv}(\{p_{1},\dots,p_{M}\})$ and observe that, in view of \eqref{E: face found}, we have  $F \neq \{\varphi^{*} \leq 1\}$.  Further, the relative interior of $\{p_{1},\dots,p_{M}\}$ is contained in $\text{ri}(F)$.  Otherwise, we could find a smaller face $F' \subsetneq \text{bdry} \, F$ such that $F' \supseteq \text{conv}(\{p_{1},\dots,p_{M}\})$. In particular, $Df_{\epsilon}(q) \in \text{ri}(F)$ and $\varphi^{*}(Df_{\epsilon}(q)) = 1$.
\vs
Finally, notice that, for each $i, j \in \tilde{\mathcal{I}}$, $p_{j} - p_{i} \in \mathcal{T}(Df_{\epsilon}(q))$.  Indeed, since $Df_{\epsilon}(q)$ is a relative interior point, we know that $F \subseteq p' + \mathcal{T}(Df_{\epsilon}(q))$ for each $p' \in F$.  Accordingly, $p_{j} - p_{i} \in \mathcal{T}(Df_{\epsilon}(q))$ holds.  
\vs
Observe that $\int_{M_{i} \cap M_{j}} \eta_{\epsilon}(q - q') \, \mathcal{H}^{d-1}(dq') > 0$ only if $\int_{M_{i}} \eta_{\epsilon}(q - q') \, dq' > 0$ and $\int_{M_{j}} \eta_{\epsilon}(q - q') \, dq' > 0$.  Thus, \eqref{E: second derivative} can be simplified to
	\begin{equation*}
		D^{2}f_{\epsilon}(q) = \frac{1}{2} \sum_{i \in \tilde{\mathcal{I}}} \sum_{j \in \tilde{\mathcal{I}} \setminus \{i\}}   \int_{M_{i} \cap M_{j}} \eta_{\epsilon}(q - q') \mathcal{H}^{d-1}(d q') \cdot \|p_{i} - p_{j}\|^{-1} (p_{i} - p_{j}) \otimes (p_{i} - p_{j}).
	\end{equation*}  
From this and the inclusion $p_{i} - p_{j} \in \mathcal{T}(Df_{\epsilon}(q))$ for $i,j \in \tilde{\mathcal{I}}$, we obtain, using  linearity,
	\begin{equation*}
		D^{2}f_{\epsilon}(q) \in \mathcal{S}_{\mathcal{T}(Df_{\epsilon}(q))}.
	\end{equation*}
In particular, $\partial \varphi^{*}(Df_{\epsilon}(q)) \subseteq \text{Ker} \, D^{2} f_{\epsilon}(q)$ by the definition of $\mathcal{T}(Df_{\epsilon}(q))$.
\vs
We conclude by setting $f_{c,n} = f_{\epsilon_{c} n^{-1}}$.  
\end{proof}  

\subsection*{Approximating norms}  We now use the convex functions of the previous subsection to build the norms of part (b) of Proposition \ref{P: polyhedral case}.  The idea is simply that the second derivative of a Finsler norm is determined by the second fundamental form of its unit ball.  Therefore, we only need to find convex sets that are appropriately curved, and the inclusion $D^{2}f_{c,n}(q) \in \mathcal{S}_{T(Df_{c,n}(q))}$ provides exactly this.

\begin{proof}[Proof of Proposition \ref{P: polyhedral case}, part (b)]  Fix $c \in (0,1)$ and apply part (a) to find $(f_{n})_{n \in \mathbb{N}}$ smooth convex functions such that, as $n\to \infty$,  $f_{n} \to f$ locally uniformly in $\{\varphi \geq c\}$ and \eqref{E: infinity harmonic} holds.  
\vs
Let $E_{n} = \{f_{n} \leq 1\}$, which is a convex set containing, for $n$ large enough,  the origin.  We claim that $\partial E_{n} \to \{\varphi = 1\}$ in the Hausdorff distance as $n \to \infty$.  Indeed, if $x_{n} \in \partial E_{n}$ for each $n \in \mathbb{N}$, then $f_{n}(x_{n}) \equiv 1$ and the local uniform convergence implies that any accumulation point of $(x_{n})_{n \in \mathbb{N}}$ is in $\{\varphi = 1\}$.  At the same time, if $x_{n} \in \{\varphi = 1\}$ for all $n$, then the identity $\varphi^{*}(Df_{c,n}) \equiv 1$ in $E_{n} \cap \{\varphi \geq c\}$ readily implies that $\text{dist}(x_{n},\partial E_{n}) \to 0$ as $n \to \infty$.  Hence the claim is proved.  
\vs
Let $(\varphi_{n})_{n \in \mathbb{N}}$ be the Finsler norms so that $\{\varphi_{n} \leq 1\} = E_{n}$ for each $n \in \mathbb{N}$.  Since $\partial E_{n}$ is smooth for each $n$, we have $(\varphi_{n})_{n \in \mathbb{N}} \subseteq C^{2}(\mathbb{R}^{d} \setminus \{0\})$.  We claim that, if $q \in \mathbb{R}^{d} \setminus \{0\}$ and $n \in \mathbb{N}$,  then $D^{2}\varphi_{n}(q) \subseteq \mathcal{S}(D\varphi_{n}(q),\varphi)$.
\vs
To see this, fix $n \in \mathbb{N}$ and $q \in \{\varphi_{n} = 1\} = \partial E_{n}$.  In view of the fact that  $\{\varphi_{n} = 1\} = \{f_{n} = 1\}$, the second fundamental forms coincide, that is,
	\begin{align*}
		D^{2}\varphi_{n}(q) &= \left(\text{Id} - \widehat{D\varphi_{n}}(q) \otimes \widehat{D\varphi_{n}}(q) \right) D^{2}\varphi_{n}(q) \left(\text{Id} - \widehat{D\varphi_{n}}(q) \otimes \widehat{D\varphi_{n}}(q) \right) \\
				&= \left(\text{Id} - \widehat{Df_{n}}(q) \otimes \widehat{Df_{n}}(q) \right) D^{2}f_{n}(q) \left(\text{Id} - \widehat{Df_{n}}(q) \otimes \widehat{Df_{n}}(q) \right).
	\end{align*}
Since  $D^{2}f_{n}(q) \in \mathcal{S}_{\mathcal{T}(Df_{n}(q))}$, we  find $D^{2}\varphi_{n}(q) \in \mathcal{S}(Df_{n}(q),\varphi)$.  Indeed, if $v \in \mathcal{T}(Df_{n}(q))$, then $v - \langle v, \widehat{Df_{n}}(q) \rangle \widehat{Df_{n}}(q) \in \langle Df_{n}(q) \rangle \oplus \mathcal{T}(Df_{n}(q))$ and, thus,
	\begin{equation*}
		\left(\text{Id} - \widehat{Df_{n}}(q) \otimes \widehat{Df_{n}}(q) \right) (v \otimes v) \left(\text{Id} - \widehat{Df_{n}}(q) \otimes \widehat{Df_{n}}(q) \right) \in \mathcal{S}(Df_{n}(q),\varphi).
	\end{equation*}
The fact that  the desired inclusion property holds for elementary tensors of the form $v \otimes v \in \mathcal{S}_{\mathcal{T}(Df_{n}(q))}$ and linearity yield that it holds for all operators in $\mathcal{S}_{\mathcal{T}(Df_{n}(q))}$.
\end{proof}  

\section{Discrete $p$-Laplace-like Schemes in General Lattices} \label{S: p laplace}

In this section, we show that the PDEs from \cite{chatterjee_souganidis}, which include the $p$-Laplace-like family of PDE \eqref{E: l1 median}, are compatible with Finsler norms and, hence,  admit comparison principles.   As we shall see, the results of \cite{chatterjee_souganidis} remain true if we replace $\mathbb{Z}^{d}$ by some other lattice in $\mathbb{R}^{d}$.  This leads to a whole class of operators analogous to \eqref{E: p laplace} that, although discontinuous in general, are compatible with Finsler geometries derived from the choice of lattice.
\vs
For completeness, we first recall the basic set up and schemes studied in \cite{chatterjee_souganidis}.

\subsection*{The discrete schemes in \cite{chatterjee_souganidis}.}  To start with, we fix a lattice $\Lambda \subseteq \mathbb{R}^{d}$, that is, a rank $d$ subgroup of $\mathbb{R}^{d}$,  and we choose a finite symmetric subset $E \subseteq \Lambda \setminus \{0\}$;
symmetry here means that $-E = E$.  We think of $(\Lambda,E)$ as defining a graph so that $(x,y)$ is an edge if $x - y \in E$.  
\vs
Associated to this graph there is a discrete gradient $D^{E}$ for functions $v : \Lambda \to \mathbb{R}$ given by 
	\begin{equation*}
		D^{E}v(x) = \{v(x + e) - v(x)\}_{e \in E}.
	\end{equation*}

Finally, for each $\alpha \in [1,\infty]$, we define operators $M_{\alpha}^{E} : \mathbb{R}^{E} \to \mathbb{R}$ by 
	\be\label{E: max min}
	\begin{split}
		M_{1}^{E}(V) &= \text{Med}\{v(x + e) - v(x) \, :  \, e \in E\},\\[1.2mm]
		M_{\alpha}^{E}(V) &= \text{argmin}_{y \in \mathbb{R}} \sum_{e \in E} |V(e) - y|^{\alpha} \quad \text{if} \, \, \alpha < \infty, \\
		M_{\infty}^{E}(V) &= \lim_{\alpha \to \infty} M_{\alpha}^{E}(V) = \frac{1}{2} \max_{e \in E} V(e) + \frac{1}{2} \min_{e \in E} V(e). 
	\end{split}
	\ee
Recall that the median $\text{Med}(A)$ of a finite set $A$ is defined in Section \ref{S: median}.  When $\alpha \in (1,\infty)$, the minimum in the definition of $M_{\alpha}$ is uniquely attained due to strict convexity.  Thus, the operators $\{M^{E}_{\alpha}\}_{\alpha \in (1,\infty)}$ are well-defined.
\vs
	
We are interested in the limiting PDE obtained from schemes in $(\Lambda,E)$ defined in the following way: given $u^{(0)} : \Lambda \to \mathbb{R}$,  the sequence $(u^{(n)})_{n \in \mathbb{N} \cup \{0\}}$ is defined recursively by the rule
	\begin{equation} \label{E: scheme}
		u^{(n)}(x) = u^{(n - 1)}(x) + M_{\alpha}^{E}(\nabla^{E}u^{(n-1)}(x)) \quad \text{if} \, \, x \in \Lambda.
	\end{equation}

\subsection*{Limiting PDE}  It was shown in \cite{chatterjee_souganidis} that, under suitable assumptions on $(\Lambda,E)$, the parabolic scaling of $(u^{(n)})_{n \in \mathbb{N} \cup \{0\}}$ yields, in the limit $n\to \infty$, a whole class of PDE generalizing \eqref{E: p laplace}.  It turns out that, in every case, the piecewise linear geometry of the underlying lattice manifests itself in the structure of the PDE. The convergence was conditioned upon knowing that the limit problem admits a comparison principle, a fact which follows from this paper.
\vs

Before stating the result, it is convenient to define the mathematical objects appearing in the limit.  First, let $J, L : \mathbb{R}^{d} \to \mathcal{P}(E)$ be the set-valued maps 
	\begin{equation} \label{E: weird indices}
		J(p) = \text{argmax}_{e \in E} \langle p, e \rangle \ \ \text{and} \ \ L(p) = \text{argmin}_{e \in E} \langle p,e \rangle.
	\end{equation}
Next, we define the maps $\overline{F}_{E,\alpha}, \underline{F}^{E,\alpha} : \mathbb{R}^{d} \times \mathcal{S}^{d} \to \mathbb{R}$.
\vs
When $\alpha = 1$, $\overline{F}_{E,1}$ and $\underline{F}^{E,1}$ are given by
\[		\overline{F}_{E,1}(p,X) = \max \left\{ \langle X e, e \rangle \, :  \, e \in L(p) \right\} \ \ \text{and} \ \ 
		\underline{F}^{E,1}(p,X) = \min \left\{ \langle X e, e \rangle \, :  \, e \in L(p) \right\}.\]
An analogous formula holds when $\alpha = \infty$, namely,
	\be\label{E: alpha infinity}
	\begin{split}
		\overline{F}_{E,\infty}(p,X) &= \max \left\{ \langle X e, e \rangle \, :  \, e \in J(p) \right\},\\[1.2mm]
		\underline{F}^{E,\infty}(p,X) &= \min \left\{ \langle X e, e \rangle \, :  \, e \in J(p) \right\}.
	\end{split}
	\ee

As discussed in Remark \ref{R: ambiguous}, the pair $(\overline{F}_{E,\infty},\underline{F}_{E,\infty})$ encodes a Finsler infinity Laplacian.  This is not true of $(\overline{F}_{E,1},\underline{F}_{E,1})$ as shown in Section \ref{S: not finsler} in the appendices. 
\vs

Finally, when $\alpha \in (1,\infty)$, it is convenient first to define $G \subseteq \mathbb{R}^{d}$ by
	\begin{equation*}
		G = \{ p \in \mathbb{R}^{d} \, :  \, \min_{e \in E} |\langle p, e \rangle| > 0\}. 
	\end{equation*}
Then $F_{E,\alpha} : G \times \mathcal{S}^{d} \to \mathbb{R}$ is given by 
	\begin{equation*}
		F_{E,\alpha}(p,X) = \Big(\sum_{e \in E} |\langle p,e \rangle|^{\alpha - 2}\Big)^{-1} \sum_{e \in E} |\langle p, e \rangle|^{\alpha - 2} \langle X e, e \rangle.
	\end{equation*}
The operators $\overline{F}_{E,\alpha}$ and $\underline {F}^{E,\alpha}$ are then the semi-continuous envelopes of $F_{E,\alpha}$, that is,
	\begin{align*}
		\overline{F}_{E,\alpha}(p,X) &= \lim_{\delta \to 0^{+}} \sup \left\{ F_{E,\alpha}(p',X) \, :  \, p' \in G, \, \,  \|p' - p\| \leq \delta \right\}, \\
		\underline{F}^{E,\alpha}(p,X) &= \lim_{\delta \to 0^{+}} \inf \left\{ F_{E,\alpha}(p',X) \, :  \, p' \in G, \, \, \|p' - p\| \leq \delta \right\}.
	\end{align*}
Notice that, when $\alpha \geq 2$, $\overline{F}_{E,\alpha} \equiv \underline{F}^{E,\alpha}$ in $(\mathbb{R}^{d} \setminus \{0\}) \times \mathcal{S}^{d}$.  This is false when $\alpha < 2$.  
\vs

Concerning the parabolic scaling limit of \eqref{E: scheme}, here is the main result, the second part of which is due to \cite{chatterjee_souganidis}. 

	\begin{theorem}  Suppose that $\Lambda$ is a rank $d$ subgroup of $\mathbb{R}^{d}$ and $E \subseteq \Lambda \setminus \{0\}$ is a finite subset such that $-E = E$.  Assume, in addition, that $E$ generates $\Lambda$, that is,
		\begin{equation} \label{E: generation}
			\Lambda = \Big\{\sum_{i = 1}^{N} m_{i} e_{i} \, :  \, N \in \mathbb{N}, \, \, m_{1},\dots,m_{N} \in \mathbb{Z}, \, \, e_{1},\dots,e_{N} \in E \Big\}.
		\end{equation}
	Given $u_{0} \in BUC(\mathbb{R}^{d})$, if the sequences $(u^{(n)}_{\epsilon})_{n \in \mathbb{N} \cup \{0\}}$ are defined so that $u^{(0)}_{\epsilon}(x) = u_{0}(\epsilon x)$ and $u^{(n)}_{\epsilon}$ satisfies \eqref{E: scheme} for each $\epsilon > 0$, then
		\begin{itemize}
			\item[(i)] there is a unique viscosity solution $u \in BUC(\mathbb{R}^{d} \times [0,T])$ of \eqref{E: parabolic} with $\overline{F} = \overline{F}_{E,\alpha}$ and $\underline{F} = \underline{F}^{E,\alpha}$.
			\item[(ii)] $u^{(n)}_{\epsilon} \to u$, that is, 
				\begin{equation*}
					\lim_{\delta \to 0^{+}} \sup \left\{ |u^{(n)}_{\epsilon}(x) - u(y,s)| \, :  \, (y,s) \in \mathbb{R}^{d} \times [0,T], \, \, \|\epsilon x - y\| + |\epsilon^{2} n - s| + \epsilon \leq \delta \right\} = 0.
				\end{equation*}
		\end{itemize}  
	\end{theorem}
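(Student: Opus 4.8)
The plan is to reduce the theorem to a comparison principle for the limiting problem \eqref{E: parabolic} with $(\overline{F},\underline{F}) = (\overline{F}_{E,\alpha},\underline{F}^{E,\alpha})$; granting that, part (ii) and the existence half of part (i) follow from the scheme-convergence machinery of \cite{chatterjee_souganidis}, which was set up precisely to be conditional on such a principle. Concretely, \cite{chatterjee_souganidis} establishes the stability, monotonicity and consistency of \eqref{E: scheme}, so that the upper and lower half-relaxed limits $\overline{u}$ and $\underline{u}$ of $(u^{(n)}_{\epsilon})$ are, respectively, a bounded subsolution and a bounded supersolution of \eqref{E: parabolic} with $\overline{u}(\cdot,0) = u_{0} = \underline{u}(\cdot,0)$. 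Since $u_{0}\in BUC(\mathbb{R}^{d})$ and $\underline{u}\le\overline{u}$ always holds, comparison forces $\overline{u}\le\underline{u}$, so $u := \overline{u} = \underline{u}\in BUC(\mathbb{R}^{d}\times[0,T])$ is the unique viscosity solution and $u^{(n)}_{\epsilon}\to u$. Thus the whole task is to verify the hypotheses of Theorem \ref{T: main parabolic} for $(\overline{F}_{E,\alpha},\underline{F}^{E,\alpha})$. The structural conditions are routine: ellipticity \eqref{A: elliptic_1}--\eqref{A: elliptic_2} holds because in every regime the operators are built from the monotone maps $X\mapsto\langle Xe,e\rangle$ by nonnegatively weighted averaging ($1<\alpha<\infty$) or by maxima and minima over subsets of $E$ ($\alpha\in\{1,\infty\}$); and semicontinuity \eqref{A: semicontinuous} holds by construction for $\alpha\in(1,\infty)$ (the envelopes of $F_{E,\alpha}$) and, for $\alpha\in\{1,\infty\}$, follows from the upper semicontinuity of $L$ and $J$, exactly as in the proof of Proposition \ref{P: infinity laplace}.

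Compatibility is the heart, and I would argue by the value of $\alpha$. If $\alpha\ge 2$, then $F_{E,\alpha}$ extends continuously across each hyperplane $\{\langle p,e\rangle = 0\}$, so $\overline{F}_{E,\alpha}\equiv\underline{F}^{E,\alpha}$ on $(\mathbb{R}^{d}\setminus\{0\})\times\mathcal{S}^{d}$, while $\overline{F}_{E,\alpha}(0,0) = 0 = \underline{F}^{E,\alpha}(0,0)$ because $F_{E,\alpha}(\cdot,0)\equiv 0$; by Proposition \ref{P: strictly convex norms} the pair is then compatible with the Euclidean norm, which meets \eqref{A: running assumptions}(ii). If $\alpha\in\{1,\infty\}$, let $\varphi_{E}$ be the Finsler norm whose unit ball is $\text{conv}(E)$, equivalently $\varphi_{E}^{*}(p) = \max_{e\in E}\langle p,e\rangle$: it is symmetric and polyhedral, and positive definite because $E$ generates the rank-$d$ lattice $\Lambda$ and hence spans $\mathbb{R}^{d}$. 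Using the representation \eqref{E: subdifferential} as in Proposition \ref{P: canonical choice}, one checks that for $p\ne 0$ one has $J(p)\subseteq\partial\varphi_{E}^{*}(p)$ and $L(p)\subseteq\partial\varphi_{E}^{*}(-p)$ with each of these vectors of $\varphi_{E}$-length one; since $\mathcal{S}(p,\varphi_{E}) = \mathcal{S}(-p,\varphi_{E})$, the computation from the proof of Proposition \ref{P: infinity laplace} shows that, for $X\in\mathcal{S}(p,\varphi_{E})$, the quadratic form $q\mapsto\langle Xq,q\rangle$ is constant on each of the faces $\partial\varphi_{E}^{*}(p)$ and $\partial\varphi_{E}^{*}(-p)$, whence its maximum and minimum over $J(p)$ (respectively $L(p)$) coincide, which is exactly \eqref{A: consistency} (the case $p = 0$ being trivial). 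In either of these two cases \eqref{A: running assumptions} holds, so Theorem \ref{T: polyhedral norms} or Proposition \ref{P: strictly convex norms} provides a shielding norm.

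The genuinely new case is $\alpha\in(1,2)$, where the operator really is discontinuous along the hyperplanes $\{\langle p,e\rangle = 0\}$. Here I would first identify the semicontinuous envelopes: writing $E_{0}(p) = \{e\in E\,:\,\langle p,e\rangle = 0\}$, a limsup/liminf analysis of the weighted averages in which one perturbs $p$ so that a single weight $|\langle p',e\rangle|^{\alpha-2}$ dominates the rest gives
	\begin{equation*}
		\overline{F}_{E,\alpha}(p,X) = \max\{\langle Xe,e\rangle\,:\,e\in E_{0}(p)\}, \qquad \underline{F}^{E,\alpha}(p,X) = \min\{\langle Xe,e\rangle\,:\,e\in E_{0}(p)\}
	\end{equation*}
whenever $E_{0}(p)\ne\emptyset$, while $\overline{F}_{E,\alpha} = \underline{F}^{E,\alpha} = F_{E,\alpha}$ (continuous) when $E_{0}(p) = \emptyset$. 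It then suffices to exhibit a polyhedral Finsler norm $\underline{\varphi}_{E}$ with the property that, whenever $E_{0}(p)\ne\emptyset$, the set $E_{0}(p)$ lies in the linear span of $\partial\underline{\varphi}_{E}^{*}(p)$, i.e.\ $\mathcal{T}(p,\underline{\varphi}_{E})\subseteq E_{0}(p)^{\perp}$: since $p\perp E_{0}(p)$ automatically, this gives $\langle p\rangle\oplus\mathcal{T}(p,\underline{\varphi}_{E})\subseteq E_{0}(p)^{\perp}$, so every $X\in\mathcal{S}(p,\underline{\varphi}_{E})$ annihilates each $e\in E_{0}(p)$, hence $\langle Xe,e\rangle\equiv 0$ there and \eqref{A: consistency} holds. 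Such a norm is obtained by refining $\varphi_{E}$: one takes $\underline{\varphi}_{E}$ to have dual ball $\{\varphi_{E}^{*}\le 1\}$ intersected with additional slabs aligned with the hyperplanes $e^{\perp}$, $e\in E$, so that $\partial\underline{\varphi}_{E}^{*}(p)$ always spreads out along $\text{conv}(E_{0}(p))$ at a degenerate $p$; the norm $\underline{\varphi}$ of Section \ref{S: Finsler norms} is precisely this construction for $E = \{\pm\bar{e}_{1},\dots,\pm\bar{e}_{d}\}$. Once $\underline{\varphi}_{E}$ is available, \eqref{A: running assumptions}(iii) holds and Theorem \ref{T: polyhedral norms} again furnishes a shielding norm.

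In all cases Theorem \ref{T: main parabolic} now applies and yields comparison for \eqref{E: parabolic}, and the reduction of the first paragraph completes parts (i) and (ii). The principal obstacle is the case $\alpha\in(1,2)$: establishing the envelope formula above rigorously, and above all constructing $\underline{\varphi}_{E}$ explicitly from $(\Lambda,E)$ and checking that it is a bona fide positive-definite polyhedral Finsler norm with the required span/tangent-space inclusion at every degenerate direction $p$. A smaller technical nuisance is the bookkeeping when $E$ contains collinear vectors, where the envelope formula must be read with multiplicities and one should reduce to the primitive directions in $E$.
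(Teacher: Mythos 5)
Your global reduction is the paper's: defer (ii) to \cite{chatterjee_souganidis}, reduce (i) to comparison, and check compatibility of $(\overline{F}_{E,\alpha},\underline{F}^{E,\alpha})$ with a Finsler norm case by case. The cases $\alpha\in[2,\infty)$ and $\alpha=\infty$ are handled correctly and essentially as the paper does (Proposition~\ref{P: strictly convex norms} for the former, $\varphi_{E}$ for the latter). The conceptual picture you give for $\alpha\in(1,2)$ — that every $X\in\mathcal{S}(p,\underline{\varphi}_{E})$ must annihilate the degenerate directions $E_{0}(p)$, so $\langle Xe,e\rangle\equiv 0$ there — is in fact a slightly cleaner reformulation of what the paper proves in Propositions~\ref{P: convexity facts} and \ref{P: transition rates}, and the envelope formula you assert matches the paper's observation that $\overline{F}_{E,\alpha}=\overline{F}_{E,1}$ and $\underline{F}^{E,\alpha}=\underline{F}^{E,1}$ off $G$.

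There is one genuine error: the treatment of $\alpha=1$. You pair $\alpha=1$ with $\alpha=\infty$ and claim $(\overline{F}_{E,1},\underline{F}^{E,1})$ is compatible with $\varphi_{E}$ via $L(p)\subseteq\partial\varphi_{E}^{*}(-p)$. This relies on reading $L(p)=\operatorname{argmin}_{e\in E}\langle p,e\rangle$ literally from \eqref{E: weird indices}, so that $L(p)=J(-p)$. But that reading cannot be right: with it, $\overline{F}_{E,1}(p,X)=\max\{\langle Xe,e\rangle: e\in J(-p)\}=\max\{\langle X(-e),-e\rangle: e\in J(p)\}=\overline{F}_{E,\infty}(p,X)$, collapsing the median operator into the infinity-Laplacian one, and it directly contradicts \eqref{E: median operator} (where $F_{1}(p,X)=X_{ii}$ for the index of \emph{smallest} $|p_{i}|$) and Proposition~\ref{P: convexity facts}(ii). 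The operator arising from the median scheme requires $L(p)=\operatorname{argmin}_{e\in E}|\langle p,e\rangle|$, for which the inclusion $L(p)\subseteq\partial\varphi_{E}^{*}(\pm p)$ fails and compatibility with $\varphi_{E}$ fails too: take $E=\{\pm\bar e_{1},\pm\bar e_{2},\pm\bar e_{3}\}$ and $p=(2,1,1)$; then $L(p)=\{\pm\bar e_{2},\pm\bar e_{3}\}$ but $\partial\varphi_{E}^{*}(p)=\{\bar e_{1}\}$, so $\mathcal{S}(p,\varphi_{E})=\mathcal{S}^{3}$ while $\overline{F}_{E,1}(p,X)-\underline{F}^{E,1}(p,X)=|X_{22}-X_{33}|$ is not identically zero. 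So $\alpha=1$ belongs with $\alpha\in(1,2)$ and requires $\underline{\varphi}_{E}$, not $\varphi_{E}$.

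Beyond that, the other missing piece is the one you yourself flag: you do not actually construct $\underline{\varphi}_{E}$ from $(\Lambda,E)$. The paper supplies the explicit dual norm \eqref{E: derived norm}, $\underline{\varphi}^{*}_{E}(p)=\max\big\{\sum_{e'\in E\setminus\{e,-e\}}|\langle p,e'\rangle|:e\in E\big\}$, and verifies $\mathcal{T}(p,\underline{\varphi}_{E})=\tilde E(p)^{\perp}$, from which your annihilation property follows (using $-E=E$). Had you discovered that concrete formula, your strategy would have also repaired the $\alpha=1$ case, since $\underline{\varphi}_{E}$ handles both. As written, the $\alpha=1$ argument is wrong and the $\alpha\in(1,2)$ argument is a program rather than a proof.
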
  

The remainder of the Section is devoted to the proof of assertion (i) of the theorem.  
\vs
When $\alpha \in \{1\} \cup [2,\infty]$, assertion (ii) follows as in \cite{chatterjee_souganidis}.  The range $\alpha \in (1,2)$ is admittedly a more challenging computation.  Nonetheless, since the focus of this paper is comparison results for the limiting PDE, we leave the remaining details as an exercise for the interested reader.
\vs
Notice that, when $\Lambda = \mathbb{Z}^{d}$ and $E = \{\rho \bar{e}_{i} \, :  \, i \in \{1,2,\dots,d\}, \, \, \rho \in \{-1,1\}\}$, the limiting PDE are precisely \eqref{E: l1 median} and \eqref{E: l1 infinity caloric}.

\subsection*{The case $\alpha = \infty$}  As is already suggested by \eqref{E: max min}, when $\alpha = \infty$, the limiting operators $\overline{F}_{E,\infty}$ and $\underline{F}^{E,\infty}$ encode an infinity Laplacian.  
\vs
To see this, it suffices to prove the following lemma:

	\begin{lemma} \label{L: alpha infinity} Let $\varphi_{E}$ be the polyhedral norm with  dual $\varphi_{E}^{*}(p) = \max \left\{ \langle p, e \rangle \, :  \, e \in E \right\}$, the latter being a norm by \eqref{E: generation}.  Let $(\overline{G}_{\varphi_{E}},\underline{G}^{\varphi_{E}})$ be the associated infinity Laplacian operators given by \eqref{E: infinity laplace}.  
	If $p \in \mathbb{R}^{d}$ and $X \in \mathcal{S}(p,\varphi_{E})$, then
		\begin{equation} \label{E: ambiguous}
			\underline{G}^{\varphi_{E}}(p,X) \leq \overline{F}_{E,\infty}(p,X) \leq \underline{F}^{E,\infty}(p,X) \leq \overline{G}_{\varphi_{E}}(p,X).
		\end{equation}
	\end{lemma}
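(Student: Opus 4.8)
The plan is to derive the chain \eqref{E: ambiguous} from two facts: (i) the inclusion $J(p) \subseteq \partial \varphi_{E}^{*}(p)$, which is responsible for the two outer inequalities and uses nothing about compatibility, and (ii) the statement that the quadratic form $e \mapsto \langle Xe, e \rangle$ is constant on $J(p)$ whenever $X \in \mathcal{S}(p,\varphi_{E})$, which gives the middle inequality (in fact as an equality). I would first dispatch the trivial case $p = 0$: by \eqref{E: subdifferential}, $\partial \varphi_{E}^{*}(0) = \{\varphi_{E} = 1\}$, which spans $\mathbb{R}^{d}$, so $\mathcal{T}(0,\varphi_{E}) = \partial \varphi_{E}^{*}(0)^{\perp} = \{0\}$ and $\mathcal{S}(0,\varphi_{E}) = \{0\}$; hence $X = 0$ and all four terms in \eqref{E: ambiguous} equal $0$. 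From now on $p \neq 0$, so $\varphi_{E}^{*}(p) = \max_{e \in E}\langle p, e \rangle$ and $J(p) = \{e \in E : \langle p, e \rangle = \varphi_{E}^{*}(p)\}$.

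For fact (i): if $e \in J(p)$, then for every $p' \in \mathbb{R}^{d}$ we have $\varphi_{E}^{*}(p') \geq \langle p', e \rangle = \varphi_{E}^{*}(p) + \langle e, p' - p \rangle$, so $e \in \partial \varphi_{E}^{*}(p)$ directly from the definition of the subdifferential. Using this, the outer inequalities are immediate: choosing $e_{0} \in J(p)$ with $\langle Xe_{0}, e_{0} \rangle = \overline{F}_{E,\infty}(p,X)$ and noting $e_{0} \in \partial \varphi_{E}^{*}(p)$ gives $\underline{G}^{\varphi_{E}}(p,X) \leq \langle Xe_{0}, e_{0} \rangle = \overline{F}_{E,\infty}(p,X)$; symmetrically, any $e_{1} \in J(p)$ lies in $\partial \varphi_{E}^{*}(p)$, so $\underline{F}^{E,\infty}(p,X) \leq \langle Xe_{1}, e_{1} \rangle \leq \overline{G}_{\varphi_{E}}(p,X)$.

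The core of the argument is fact (ii), which I would prove exactly as in Proposition \ref{P: infinity laplace}. Since $\mathcal{S}(p,\varphi_{E}) = \mathcal{S}_{\langle p \rangle \oplus \mathcal{T}(p,\varphi_{E})}$ is spanned by the elementary tensors $p \otimes p$, $v \otimes p + p \otimes v$ with $v \in \mathcal{T}(p,\varphi_{E})$, and $v \otimes v' + v' \otimes v$ with $v, v' \in \mathcal{T}(p,\varphi_{E})$, by linearity it suffices to check constancy of $e \mapsto \langle Xe, e \rangle$ on $J(p)$ for $X$ equal to each of these. Every $e \in J(p)$ satisfies $\langle p, e \rangle = \varphi_{E}^{*}(p)$, and, since $J(p) \subseteq \partial \varphi_{E}^{*}(p)$ and $\mathcal{T}(p,\varphi_{E}) = \partial \varphi_{E}^{*}(p)^{\perp}$, also $\langle v, e \rangle = 0$ for every $v \in \mathcal{T}(p,\varphi_{E})$. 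Therefore $\langle (p \otimes p)e, e \rangle = \varphi_{E}^{*}(p)^{2}$, $\langle (v \otimes p + p \otimes v)e, e \rangle = 2 \varphi_{E}^{*}(p) \langle v, e \rangle = 0$, and $\langle (v \otimes v' + v' \otimes v)e, e \rangle = 2 \langle v, e \rangle \langle v', e \rangle = 0$, each independent of $e \in J(p)$. Hence $e \mapsto \langle Xe, e \rangle$ is constant on $J(p)$ for every $X \in \mathcal{S}(p,\varphi_{E})$, which forces $\overline{F}_{E,\infty}(p,X) = \underline{F}^{E,\infty}(p,X)$ and completes \eqref{E: ambiguous}.

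There is no serious obstacle here; the only point demanding care is the correct bookkeeping of $\partial \varphi_{E}^{*}(p)$ and $\mathcal{T}(p,\varphi_{E})$ so that the orthogonality $\langle v, e \rangle = 0$ for $v \in \mathcal{T}(p,\varphi_{E})$ and $e \in J(p)$ is properly justified — everything else simply transcribes the compatibility computation already carried out for Finsler infinity Laplacians. As a consistency check, combining \eqref{E: ambiguous} with the evident bound $\underline{G}^{\varphi_{E}}(p,X) \leq \overline{G}_{\varphi_{E}}(p,X)$ and the compatibility of $(\overline{G}_{\varphi_{E}}, \underline{G}^{\varphi_{E}})$ from Proposition \ref{P: infinity laplace} shows that all four operators coincide on $\mathcal{S}(p,\varphi_{E})$, which is exactly what one expects.
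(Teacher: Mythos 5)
Your proof is correct and fills in exactly the ``direct computation'' the paper leaves implicit: the outer inequalities come from $J(p)\subseteq\partial\varphi_E^*(p)$, and the middle (in fact an equality) comes from the elementary-tensor decomposition already used in Proposition~\ref{P: infinity laplace}, which is what the paper's key observation about $\partial\varphi_E^*(p)$ is pointing at. Same approach, just spelled out.
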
  

The lemma implies, in particular, that $(\overline{F}_{E,\infty},\underline{F}^{E,\infty})$ satisfy \eqref{A: consistency} with the norm $\varphi_{E}$.  Note that the other assumptions \eqref{A: elliptic_1}, \eqref{A: elliptic_2}, and \eqref{A: semicontinuous} hold trivially.  Therefore, the comparison principle applies by Theorem \ref{T: main parabolic} and Theorem \ref{T: polyhedral norms}.

	\begin{proof}[Proof of Lemma \ref{L: alpha infinity}]  This is a direct computation.  The main point is that
		\begin{equation*}
			\partial \varphi_{E}^{*}(p) = \left\{q \in \text{conv}(E) \, :  \, \langle q,p \rangle = \varphi^{*}(p) \right\}.
		\end{equation*} \end{proof}  
	
\begin{remark} \label{R: ambiguous} The previous lemma exposes a fundamental reality in the approach taken thoughout this paper. The operators $\overline{F}$ and $\underline{F}$ associated with \eqref{E: parabolic} are usually ambiguously defined.  For instance, in the present example, the solutions of \eqref{E: parabolic} with the operators $(\overline{F}_{E,\infty},\underline{F}^{E,\infty})$ are the same as those of \eqref{E: parabolic} with $(\overline{G}_{\varphi_{E}},\underline{G}^{\varphi_{E}})$, even though the inequalities in \eqref{E: ambiguous} can be strict for some vectors $(p,X)$.
\vs
Note, however, that $\overline{F}$ and $\underline{F}$ are unambiguously defined whenever $X \in \mathcal{S}(p,X)$.  (Indeed, they even coincide by fiat due to \eqref{A: consistency}.)  The proof of Theorem \ref{T: main parabolic} shows that the only information that is necessary to derive the comparison principle is the values of these functions on pairs $(p,X)$ with $X \in \mathcal{S}(p,\varphi)$.  We could have restricted the domains of $\overline{F}$ and $\underline{F}$ to the set of such vectors from the outset, and even restricted our definitions of viscosity sub- and super-solutions accordingly.  For an exposition following that approach in the context of level set PDEs,  see Barles and Georgelin \cite{barles_georgelin}.  \end{remark}

\subsection*{The case $\alpha \in [2,\infty)$}  As it was pointed out already, when $\alpha \in [2,\infty)$, the comparison principle for the associated equation follows classically.  
\vs
To put it another way, the operators $(\overline{F}_{E,\alpha},\underline{F}_{E,\alpha})$ satisfy \eqref{E: mean curvature type} in this regime so, by Proposition \ref{P: strictly convex norms}, they are compatible with the Euclidean norm.  Hence comparison is implied by Theorem~\ref{T: main parabolic} and Theorem~\ref{T: C2 norms}.

\subsection*{The case $\alpha \in [1,2)$}  In this setting, the geometry is more complicated.  We define the norm $\underline{\varphi}_{E}$ implicitly through its dual norm which is given by 
	\begin{equation} \label{E: derived norm}
		\underline{\varphi}^{*}_{E}(p) = \max \Big\{ \sum_{e' \in E \setminus \{e,-e\}} |\langle p, e' \rangle| \, :  \, e \in E \Big\}.
	\end{equation}
We leave it to the reader to check that $\underline{\varphi}^{*}_{E}$ defines a polyhedral Finsler norm.  Recall that $\underline{\varphi}_{E}$ is polyhedral by Proposition \ref{P: polyhedral norms duality}.
\vs
As in the case $\alpha = \infty$, to apply Theorem~\ref{T: main parabolic} and Theorem~\ref{T: polyhedral norms}, all that is left is to prove that $(\overline{F}_{E,\alpha},\underline{F}^{E,\alpha})$ is compatible with the norm $\underline{\varphi}_{E}$.  This is implied by the following result.

	\begin{prop} \label{P: alpha less infinity} If $ \alpha \in [1,2)$, then,  for each $p \in \mathbb{R}^{d}$, if $X \in \mathcal{S}(p,\underline{\varphi}_{E})$,
		\begin{equation*}
			\overline{F}_{E,\alpha}(p,X) = \underline{F}^{E,\alpha}(p,X).		\end{equation*}
	\end{prop}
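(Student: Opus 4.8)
The plan is to reduce the statement to a purely linear-algebra fact about which directional second derivatives are ``seen'' by the operators $F_{E,\alpha}$, analogous to the computation carried out for the $\ell^1$-norm in Appendix \ref{Appendix: technical lemmas}. First I would make explicit the structure of $\mathcal{T}(p,\underline{\varphi}_{E})$ and $\mathcal{S}(p,\underline{\varphi}_{E})$. From the formula \eqref{E: derived norm}, $\underline{\varphi}^{*}_{E}(p) = \max_{e \in E} \sum_{e' \in E \setminus \{e,-e\}} |\langle p, e'\rangle|$, the subdifferential $\partial \underline{\varphi}_{E}(q)$ at a point $q$ with $\underline{\varphi}_{E}(q) = 1$ is a face of $\{\underline{\varphi}^{*}_{E} \le 1\}$, and the relevant object $\mathcal{T}(p,\underline{\varphi}_{E}) = \partial \underline{\varphi}^{*}_{E}(p)^{\perp}$ will turn out to be spanned by the vectors $e'$ in $E$ for which $\langle p, e'\rangle$ can vanish — more precisely, I expect that, fixing the index $e^{*} \in E$ realizing the max and writing $E_0(p) = \{e' \in E \setminus \{e^{*}, -e^{*}\} : \langle p, e'\rangle = 0\}$, one has $\mathcal{T}(p,\underline{\varphi}_{E}) = \operatorname{span}(E_0(p))$ when this is consistent with the face structure, and otherwise a slight refinement thereof. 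I would verify this by the same kind of direct dual-norm computation used in the appendix for $\ell^1$.

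Next I would analyze the discontinuities of $F_{E,\alpha}$ for $\alpha \in [1,2)$. The function $F_{E,\alpha}(p,X) = \left(\sum_{e \in E} |\langle p, e\rangle|^{\alpha-2}\right)^{-1} \sum_{e \in E} |\langle p, e\rangle|^{\alpha-2} \langle X e, e\rangle$ is continuous on $G$, and a discontinuity in $p$ as one approaches $\partial G$ arises precisely because $|\langle p, e'\rangle|^{\alpha - 2} \to \infty$ along those $e'$ with $\langle p, e'\rangle \to 0$ (since $\alpha - 2 < 0$). The key observation is that the weights concentrate on the vanishing directions: if $p \notin G$ and $E_{=0}(p) = \{e \in E : \langle p, e\rangle = 0\}$, then $\overline{F}_{E,\alpha}(p,X)$ and $\underline{F}^{E,\alpha}(p,X)$ are obtained by a limiting procedure in which the finite weights are negligible, so that $\overline{F}_{E,\alpha}(p,X) - \underline{F}^{E,\alpha}(p,X)$ is controlled by the oscillation of the normalized weighted average of $\{\langle X e, e\rangle : e \in E_{=0}(p)\}$ (with weights $|\langle p', e\rangle|^{\alpha-2}$, $p' \to p$). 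Therefore the ambiguity vanishes as soon as $\langle X e, e\rangle$ takes the same value for all $e \in E_{=0}(p)$ — or, more carefully, as soon as the relevant weighted averages all agree in the limit, which certainly holds if $e \mapsto \langle X e, e\rangle$ is constant on $E_{=0}(p)$. (The case $E_{=0}(p) = \emptyset$ is trivial since then $p \in G$ and $\overline F_{E,\alpha} = \underline F^{E,\alpha} = F_{E,\alpha}$ there.)

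The remaining task is then the linear-algebra matching: show that if $X \in \mathcal{S}(p,\underline{\varphi}_{E}) = \mathcal{S}_{\langle p \rangle \oplus \mathcal{T}(p,\underline{\varphi}_{E})}$, then $e \mapsto \langle X e, e\rangle$ is constant (indeed equal to $0$) on $E_{=0}(p)$ — or at least that the weighted limiting averages coincide. Here I would use that each $e \in E_{=0}(p)$ satisfies $\langle p, e\rangle = 0$, i.e. $e \perp p$, so it remains to understand the component of $e$ in $\mathcal{T}(p,\underline{\varphi}_{E})$ versus its component in $(\langle p\rangle \oplus \mathcal{T}(p,\underline{\varphi}_{E}))^{\perp}$; if $X$ only sees $\langle p\rangle \oplus \mathcal{T}(p,\underline{\varphi}_{E})$ and $e$ has no component in $\langle p \rangle$, then $\langle X e, e\rangle = \langle X \pi_{\mathcal{T}} e, \pi_{\mathcal{T}} e\rangle$, which one must show is independent of $e$ within $E_{=0}(p)$. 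The cleanest route, mirroring Proposition \ref{P: infinity laplace}, is to expand $X$ over the spanning elementary tensors of $\mathcal{S}(p,\underline{\varphi}_{E})$ — namely $p\otimes p$, $v\otimes p + p\otimes v$ for $v \in \mathcal{T}$, and $v\otimes v' + v'\otimes v$ for $v,v' \in \mathcal{T}$ — and check the claim tensor by tensor using $\langle p, e\rangle = 0$. This reduces matters to showing that for $v, v' \in \mathcal{T}(p,\underline{\varphi}_{E})$ the quantity $\langle v, e\rangle\langle v', e\rangle$ has the right (vanishing or $e$-independent) behaviour on $E_{=0}(p)$, which is where the explicit description of $\mathcal{T}$ from the first step is needed: I expect that every $e \in E_{=0}(p)$, being one of the ``vanishing'' directions defining the relevant face, is either itself orthogonal to $\mathcal{T}(p,\underline{\varphi}_{E})$ or lies in a 1-dimensional piece on which the weighted average is forced to agree.

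The main obstacle, I anticipate, is precisely this last matching step — establishing that the generalized tangent space $\mathcal{T}(p,\underline{\varphi}_{E})$ derived from the \emph{dual} of \eqref{E: derived norm} is exactly the right subspace to kill the singular directions of $F_{E,\alpha}$, rather than merely a subspace of it or a superspace. The subtlety is that the ``$\max$ over $e$'' in \eqref{E: derived norm} means that for a given $p$ there may be several competing indices $e^{*}$, and the face $\partial \underline{\varphi}^{*}_{E}(p)$ can be genuinely lower-dimensional than the naive span of $E_{=0}(p)$; one has to confirm that $\mathcal{S}(p,\underline{\varphi}_{E})$ is nonetheless still large enough to force $e \mapsto \langle X e, e\rangle$ constant on $E_{=0}(p)$. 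I would handle this by a careful case analysis on which indices $e^{*}$ achieve the maximum defining $\underline{\varphi}^{*}_{E}(p)$, using the representation \eqref{E: subdifferential} of the subdifferential together with Proposition \ref{P: polyhedral norms duality}, much as in the $\ell^1$ computation in Appendix \ref{Appendix: technical lemmas}; the design of the norm \eqref{E: derived norm}, with its removal of the single pair $\{e,-e\}$, is presumably reverse-engineered to make exactly this work.
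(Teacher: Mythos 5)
Your overall strategy (expand $X$ over the elementary spanning tensors of $\mathcal{S}(p,\underline{\varphi}_{E})$ and show that the quadratic form $e\mapsto\langle Xe,e\rangle$ is constant on the relevant ``minimizing'' set of lattice directions) is the right one and matches the paper's. However there are two concrete gaps, one of which is a wrong guess about the geometry, and they would derail the argument as outlined.

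First, your proposed description of the tangent space is incorrect. You predict $\mathcal{T}(p,\underline{\varphi}_{E}) \approx \operatorname{span} E_0(p)$ where $E_0(p)$ consists of the directions $e'\in E$ with $\langle p,e'\rangle = 0$. But already for $\Lambda=\mathbb{Z}^3$, $E=\{\pm\bar e_i\}$ and $p=(1,2,3)$, no $e'$ is orthogonal to $p$, so $E_0(p)=\emptyset$; yet $\tilde E(p)$ is the singleton $\{(0,2,2)\}$ (using the paper's $\tilde E$ from Proposition \ref{P: convexity facts}), so $\mathcal{T}(p,\underline{\varphi}_{E})=\tilde E(p)^\perp$ is two-dimensional. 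The correct description is the content of Proposition \ref{P: convexity facts}(iii): $\mathcal{T}(p,\underline{\varphi}_{E})=\tilde E(p)^\perp$, where $\tilde E(p)$ consists of the sign-weighted vectors $q_p(e)=\sum_{e'\in E\setminus\{e,-e\}}\rho_p(e')e'$ over $e\in L(p)$. Correspondingly, your expectation that each singular direction $e$ is ``itself orthogonal to $\mathcal{T}$'' is also wrong; the mechanism that actually works (the paper's Proposition \ref{P: transition rates}) is that for $v\in\mathcal{T}(p,\underline{\varphi}_{E})$ and $e,e'\in L(p)$ the inner products $\langle v,e\rangle$ and $\langle v,e'\rangle$ agree up to the sign $\rho_p$, because $\langle v,q_p(e)\rangle=\langle v,q_p(e')\rangle=0$ pins down $\rho_p(e)\langle v,e\rangle$.

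Second, your analysis of the discontinuities is tied to the weighted-average formula $F_{E,\alpha}(p,X)=\bigl(\sum_e|\langle p,e\rangle|^{\alpha-2}\bigr)^{-1}\sum_e|\langle p,e\rangle|^{\alpha-2}\langle Xe,e\rangle$ on $G$, and to the set $E_{=0}(p)$. This is adequate for $\alpha\in(1,2)$, but it breaks for $\alpha=1$: the paper defines $\overline F_{E,1}$ and $\underline F^{E,1}$ directly as the $\max$/$\min$ of $\langle Xe,e\rangle$ over $L(p)=\operatorname{argmin}_e|\langle p,e\rangle|$, and these already disagree whenever $\#L(p)>1$ \emph{even when $p\in G$} (for instance $|p_1|=|p_2|\neq 0$ smaller than the other $|p_i|$'s). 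Your parenthetical that $E_{=0}(p)=\emptyset$ is a trivial case is therefore false for $\alpha=1$. The correct target is to prove constancy of $e\mapsto\langle Xe,e\rangle$ on $L(p)$ for \emph{all} $p\neq 0$, which handles $\alpha=1$ directly; the $\alpha\in(1,2)$ case then reduces to this by observing (as the paper does) that for $p\notin G$ the concentrating weights give $\overline F_{E,\alpha}(p,X)=\overline F_{E,1}(p,X)$ and $\underline F^{E,\alpha}(p,X)=\underline F^{E,1}(p,X)$. In short: replace $E_{=0}(p)$ by $L(p)$ throughout, replace the guessed $\mathcal{T}=\operatorname{span}E_0(p)$ by $\mathcal{T}=\tilde E(p)^\perp$, and replace the ``orthogonality to $\mathcal{T}$'' heuristic by the sign-matched identity $\rho_p(e)\langle v,e\rangle=\rho_p(e')\langle v,e'\rangle$.
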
  
	
Before proceeding to the proof, we remark that we do not know an enlightening justification for the appearance of the norm \eqref{E: derived norm} or a geometric interpretation of the map $(\Lambda,E) \mapsto \varphi_{E} \mapsto \varphi_{E}^{*}$. Indeed, we only know that it works in the computations that follow.  Nonetheless, it may be worth explaining where it comes from.
\vs
When $\Lambda = \mathbb{Z}^{d}$ and $E$ consists of the standard orthonormal basis together with its antipodal points, the operators $(\overline{F}_{E,1},\underline{F}_{E,1})$ are determined by 
	\begin{equation*}
		\overline{F}_{E,1}(p,X) = \underline{F}_{E,1}(p,X) = X_{ii} \ \ \text{if} \ \ |p_{i}| < \min\{|p_{1}|,\dots,|p_{i-1}|,|p_{i+1}|,\dots,|p_{d}|\}.
	\end{equation*}
Consider the $d = 3$ setting.  If we visualize the discontinuity set in the gradient variable, ignoring the Hessian, this brings to mind the cube $\{p \in \mathbb{R}^{d} \, :  \, |p_{1}|,\dots,|p_{d}| \leq 1\}$ with each $2$-dimensional face subdivided into four congruent triangles.  We can associate to each triangle an opposing one that shares a boundary along one of the $1$-dimensional faces of the cube.  The operators $(\overline{F}_{E,1},\underline{F}_{E,1})$ are then constant in the union of any triangle and its opposing one.
\vs
As for the relevance of \eqref{E: derived norm}, we notice that, up to a constant factor,  it coincides with the example \eqref{E: rhombic dodecahedron} of Section \ref{S: polyhedral norms}, and the unit ball $\{\underline{\varphi}^{*} \leq 1\}$ is a rhombic dodecahedron.  One of the classical constructions of the rhombic dodecahedron proceeds by starting with the cube and adding a pyramid to each face --- see \cite{mathworld_article} or \cite{steinhaus}.  Projecting the pyramids down onto the cube results in exactly the triangles mentioned in the previous paragraph.   

\subsection*{Proof of Proposition \ref{P: alpha less infinity}}  To start with, it will be convenient to define $\tilde{E} \subseteq \Lambda$ by
	\begin{equation*}
		\tilde{E} = \Big\{ \sum_{e' \in E \setminus \{e,-e\}} \rho(e') e' \, :  \, e \in E, \, \, \rho \in \{-1,1\}^{E \setminus \{e\}} \Big\}.
	\end{equation*}
Given $p \in \mathbb{R}^{d}$, we let $\tilde{E}(p)$ denote the subset of $\tilde{E}$ given by
	\begin{equation*}
		\tilde{E}(p) = \{q \in \tilde{E} \, :  \, \langle p, q \rangle = \underline{\varphi}_{E}^{*}(p) \}.
	\end{equation*}
\vs
The following result relates $\tilde{E}$ to $\underline{\varphi}_{E}^{*}$, $L$ (see \eqref{E: weird indices}), and $\mathcal{T}(\cdot,\underline{\varphi}_{E})$ and will be useful in the sequel:

	\begin{prop} \label{P: convexity facts} (i) For each $p \in \mathbb{R}^{d}$,   
	$\underline{\varphi}_{E}^{*}(p) = \max \{ \langle p, q \rangle \, :  \, q \in \tilde{E}\}$ and $\partial \underline{\varphi}_{E}^{*}(0) = \text{conv}(\tilde{E})$.  
	\vskip.05in
	(ii) Given $p \in \mathbb{R}^{d} \setminus \{0\}$,  $e \in L(p)$ if and only if $\underline{\varphi}_{E}^{*}(p) = \sum_{e' \in E \setminus \{e,-e\}} |\langle p, e' \rangle|.$
	Furthermore, if $e, e' \in L(p)$, then $|\langle p,e \rangle| = |\langle p,e' \rangle|$.
	\vskip.05in
	(iii) For each $p \in \mathbb{R}^{d}$,  $\mathcal{T}(p,\underline{\varphi}_{E}) = \tilde{E}(p)^{\perp}$.
\end{prop}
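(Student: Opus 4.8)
The plan is to prove the three parts in order, since each builds on the description of $\underline{\varphi}_E^*$ in \eqref{E: derived norm} and on standard convex-analytic facts about polyhedral norms.

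For part (i), the key observation is that for any fixed $e \in E$, the quantity $\sum_{e' \in E \setminus \{e,-e\}} |\langle p, e' \rangle|$ can itself be written as a maximum over sign choices, namely $\max\{\langle p, \sum_{e' \in E\setminus\{e,-e\}}\rho(e')e'\rangle : \rho \in \{-1,1\}^{E\setminus\{e\}}\}$, because $|\langle p,e'\rangle| = \max\{\langle p,e'\rangle, \langle p,-e'\rangle\}$ and the sum of maxima is the maximum of the sums over independent choices. Taking the further maximum over $e \in E$ and comparing with \eqref{E: derived norm} gives $\underline{\varphi}_E^*(p) = \max\{\langle p,q\rangle : q \in \tilde{E}\}$, which is the first claim. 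The identity $\partial\underline{\varphi}_E^*(0) = \text{conv}(\tilde E)$ then follows from the subdifferential formula \eqref{E: subdifferential} applied to the norm $(\underline{\varphi}_E^*)^* = \underline{\varphi}_E$: since $\underline{\varphi}_E^*$ is the support function of $\text{conv}(\tilde E)$ and also, by definition of duality, the support function of $\{\underline{\varphi}_E \le 1\}$, the two convex bodies must agree, so $\partial\underline{\varphi}_E^*(0) = \{\underline{\varphi}_E \le 1\} = \text{conv}(\tilde E)$. (One should note here that $\tilde E$ already contains all its extreme points because each generator is a $\pm1$-combination of a fixed subset of $E$, so no convex hull is lost.)

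For part (ii), fix $p \in \mathbb{R}^d\setminus\{0\}$. From \eqref{E: derived norm}, $e$ realizes the maximum defining $\underline{\varphi}_E^*(p)$ precisely when $\underline{\varphi}_E^*(p) = \sum_{e'\in E\setminus\{e,-e\}}|\langle p,e'\rangle|$. On the other hand, writing $S(p) = \sum_{e'\in E}|\langle p,e'\rangle|$ (the sum over all of $E$, which double-counts $\pm e'$), we have $\sum_{e'\in E\setminus\{e,-e\}}|\langle p,e'\rangle| = S(p) - 2|\langle p,e\rangle|$, so the maximum over $e$ is achieved exactly when $|\langle p,e\rangle|$ is minimized over $e \in E$, i.e. when $\langle p,e\rangle = \min_{e''\in E}\langle p,e''\rangle$ or $\langle p,-e\rangle = \min_{e''}\langle p,e''\rangle$; by symmetry of $E$ this says $e \in L(p)$ (using $L(p) = \text{argmin}$ and that $L(-e) \ni$ the same extremal value). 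The final sentence of (ii) is immediate: if $e,e' \in L(p)$ both minimize $|\langle p,\cdot\rangle|$, then $|\langle p,e\rangle| = |\langle p,e'\rangle|$.

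For part (iii), recall $\mathcal{T}(p,\underline{\varphi}_E) = \partial\underline{\varphi}_E^*(p)^\perp$, so I must show $\partial\underline{\varphi}_E^*(p) $ and $\tilde E(p)$ span the same subspace, equivalently have the same orthogonal complement. By \eqref{E: subdifferential}, for $p \ne 0$ we have $\partial\underline{\varphi}_E^*(p) = \{q \in \{\underline{\varphi}_E \le 1\} : \langle p,q\rangle = \underline{\varphi}_E^*(p)\}$, which is the face of $\text{conv}(\tilde E)$ exposed by $p$; by part (i) that face is exactly $\text{conv}(\tilde E(p))$, so $\partial\underline{\varphi}_E^*(p) = \text{conv}(\tilde E(p))$ and in particular the two sets have the same linear span and hence the same perpendicular space. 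The case $p = 0$ is covered by part (i) together with $\tilde E(0) = \tilde E$. The main obstacle I anticipate is the bookkeeping in part (ii)—making sure the reindexing between "$e$ maximizes the expression in \eqref{E: derived norm}" and "$e \in L(p)$" correctly accounts for the symmetry $-E = E$ and for the possibility that several $e$ tie—but this is a finite, elementary combinatorial check rather than a conceptual difficulty.
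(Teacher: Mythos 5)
Your parts (i) and (iii) are correct and follow essentially the same route as the paper; part (i) is actually a touch cleaner, since you invoke uniqueness of support functions rather than the paper's terse ``separating hyperplanes'' remark, and in (iii) the identification $\partial\underline{\varphi}_E^*(p) = \text{conv}(\tilde E(p))$ via the exposed-face description is exactly what the paper does.

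Part (ii) contains a genuine error. Your algebraic reduction is right and matches the paper: writing $S(p) = \sum_{e'\in E}|\langle p,e'\rangle|$, the quantity $\sum_{e'\in E\setminus\{e,-e\}}|\langle p,e'\rangle| = S(p) - 2|\langle p,e\rangle|$ is maximal over $e$ exactly when $|\langle p,e\rangle|$ is \emph{minimal}. But the sentence that follows, ``i.e.\ when $\langle p,e\rangle = \min_{e''\in E}\langle p,e''\rangle$ or $\langle p,-e\rangle = \min_{e''}\langle p,e''\rangle$,'' is false. Since $-E = E$, one has $\min_{e''\in E}\langle p,e''\rangle = -\max_{e''\in E}|\langle p,e''\rangle|$, so the condition $\langle p,e\rangle = \min$ (or $\langle p,-e\rangle = \min$) characterizes $|\langle p,e\rangle|$ being \emph{maximal}, not minimal. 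In the standard $\mathbb{Z}^d$ example with $E = \{\pm\bar e_i\}$ and $p = (1,2)$, the minimizers of $|\langle p,e\rangle|$ are $\pm\bar e_1$, while the minimizers of $\langle p,e\rangle$ are $\{-\bar e_2\}$; these are disjoint sets.

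The confusion is entangled with what appears to be a typo in \eqref{E: weird indices}: there $L(p)$ is written as $\text{argmin}_{e\in E}\langle p,e\rangle$, but for consistency with \eqref{E: median operator}, with the paper's own proof of Proposition~\ref{P: convexity facts}(ii) (which assumes $|\langle p,e\rangle| \le |\langle p,e'\rangle|$ for $e\in L(p)$, $e'\in E$), and with the statement you are proving, $L(p)$ must mean $\text{argmin}_{e\in E}|\langle p,e\rangle|$. With that corrected definition the step from ``$|\langle p,e\rangle|$ minimized'' to ``$e\in L(p)$'' is a tautology, and no reindexing over signs is needed at all. So the ``bookkeeping obstacle'' you flagged is not a matter of accounting for $-E=E$; it is that you reversed a minimum for a maximum, and the fix is simply to use the intended definition of $L(p)$ directly rather than trying to reconcile it with the $\min\langle p,\cdot\rangle$ reading.
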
  

	\begin{proof}  (i) The formula for $\underline{\varphi}^{*}_{E}$ follows directly from its definition and that of $\tilde{E}$.  The inclusion $\partial \underline{\varphi}_{E}^{*}(0) \supseteq \text{conv}(\tilde{E})$ follows immediately.  The opposite inclusion can be proved by contrapositive using separating hyperplanes.  
\vs	
	(ii) If $e \in L(p)$, then, for each $e' \in E$,
			\begin{align*}
				\sum_{e'' \in E \setminus \{e,-e\}} |\langle p,e'' \rangle| &= 2|\langle p,e' \rangle| + \sum_{e'' \in E \setminus \{e,e',-e,-e'\}} |\langle p,e'' \rangle| \\
						&\geq 2|\langle p, e \rangle| + \sum_{e'' \in E \setminus \{e,e',-e,-e'\}} |\langle p,e'' \rangle|= \sum_{e'' \in E \setminus \{e',-e'\}} |\langle p, e'' \rangle|
			\end{align*}
		with equality if and only if $|\langle p, e' \rangle| = |\langle p,e \rangle|$.  Hence $\underline{\varphi}^{*}_{E}(p) = \sum_{e'' \in E \setminus \{e,-e\}} |\langle p,e'' \rangle|$ and the map $v \mapsto |\langle p, v \rangle|$ is constant in $L(p)$.
\vs 		
	(iii) First, we claim that $\tilde{E}(p) \subseteq \partial \underline{\varphi}^{*}_{E}(p)$.  Indeed, since $\tilde{E} \subseteq \partial \underline{\varphi}_{E}^{*}(0)$, we know that
				\begin{equation*}
					\partial \underline{\varphi}_{E}^{*}(p) = \{q \in \partial \underline{\varphi}_{E}^{*}(0) \, :  \, \underline{\varphi}_{E}^{*}(p) = \langle p, q \rangle \} \supseteq \tilde{E}(p).
				\end{equation*}
			Thus, $\mathcal{T}(p,\underline{\varphi}_{E}) = \partial \underline{\varphi}_{E}^{*}(p)^{\perp} \subseteq \tilde{E}(p)^{\perp}$.  
\vs			
			To prove the opposite inclusion, we first show that $\partial \underline{\varphi}_{E}^{*}(p) = \text{conv}(\tilde{E}(p))$.  What we already proved implies that $\text{conv}(\tilde{E}(p)) \subseteq \partial \underline{\varphi}_{E}^{*}(p)$.  To see that equality holds, observe that, if $q \in \partial \underline{\varphi}_{E}^{*}(p)$, then $q \in \partial \underline{\varphi}_{E}^{*}(0) = \text{conv}(\tilde{E})$. It follows that we can fix $\{q_{1},\dots,q_{N}\} \subseteq \tilde{E}$ and $\{\lambda_{1},\dots,\lambda_{N}\} \subseteq [0,1]$ such that $\sum_{i =1}^{N} \lambda_{i} = 1$ and $q = \sum_{i = 1}^{N} \lambda_{i} q_{i}$.  Moreover, $q \in \partial \underline{\varphi}_{E}^{*}(p)$ yields 
				\begin{equation*}
					\underline{\varphi}_{E}^{*}(p) = \langle q, p \rangle = \sum_{i = 1}^{N} \lambda_{i} \langle q_{i}, p \rangle \leq \sum_{i = 1}^{N} \lambda_{i} \underline{\varphi}_{E}^{*}(p) = \underline{\varphi}_{E}^{*}(p).
				\end{equation*}
			It follows that $\langle q_{i},p \rangle = \underline{\varphi}_{E}^{*}(p)$ for each $i \in \{1,2,\dots,N\}$.  In other words, $q \in \text{conv}(\tilde{E}(p))$, and, hence,  $\partial \underline{\varphi}_{E}^{*}(p) \subseteq \text{conv}(\tilde{E}(p))$ 
\vs			
			Finally, suppose that $v \in \tilde{E}(p)^{\perp}$.  By definition, this means the linear functional\\ $\ell_{v} : \mathbb{R}^{d} \to \mathbb{R}$ given by $\ell_{v}(q) = \langle q, v \rangle$ vanishes in $\tilde{E}(p)$.  Hence it vanishes in $\text{conv}(\tilde{E}(p)) = \partial \underline{\varphi}_{E}^{*}(p)$, which is to say that $v \in \partial \underline{\varphi}_{E}^{*}(p)^{\perp}$.  Thus, $\tilde{E}(p)^{\perp} \subseteq \partial \underline{\varphi}_{E}^{*}(p)^{\perp} = \mathcal{T}(p, \underline{\varphi}_{E})$.\end{proof}  

We are now prepared to state and prove the main identity underlying Proposition \ref{P: alpha less infinity}.  In what follows, given $p \in \mathbb{R}^{d} \setminus \{0\}$, define $\rho_{p} : E \to \{-1,1\}$ and $q_{p} : E \to \tilde{E}$ by 
				\begin{align*}
					q_{p}(e) = \sum_{e' \in E \setminus \{e,-e\}} \rho_{p}(e') e' \ \ \text{and} \ \ \rho_{p}(e) = \left\{ \begin{array}{r l}
											\dfrac{\langle p,e \rangle}{|\langle p,e \rangle|} & \text{if} \, \, |\langle p,e \rangle| > 0, \\
											1 & \text{otherwise.}
									\end{array} \right.
				\end{align*}
			
		\begin{prop} \label{P: transition rates} Let $p \in \mathbb{R}^{d} \setminus \{0\}$ and suppose that $v \in \mathcal{T}(p,\underline{\varphi}_{E})$.  For each $e, e' \in L(p)$, we have
			\begin{equation*}
				\rho_{p}(e) \langle p, e \rangle = \rho_{p}(e') \langle p,e' \rangle \ \ \text{and} \ \  \rho_{p}(e) \langle v, e \rangle = \rho_{p}(e') \langle v, e' \rangle.
			\end{equation*}		
		\end{prop}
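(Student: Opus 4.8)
The plan is to obtain both identities from Proposition~\ref{P: convexity facts}, using that the elements of $L(p)$ all share the same value of $|\langle p,\cdot\rangle|$ and that $\mathcal{T}(p,\underline{\varphi}_{E})=\tilde{E}(p)^{\perp}$.

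For the first identity, the point is that $\rho_{p}(e)\langle p,e\rangle=|\langle p,e\rangle|$ for \emph{every} $e\in E$, straight from the definition of $\rho_{p}$; the convention $\rho_{p}(e)=1$ when $\langle p,e\rangle=0$ is harmless since then both sides vanish. Given $e,e'\in L(p)$, the last assertion of Proposition~\ref{P: convexity facts}(ii) gives $|\langle p,e\rangle|=|\langle p,e'\rangle|$, and the first identity follows; note in particular that $\langle p,e\rangle=0$ for one element of $L(p)$ exactly when it holds for all of them.

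The second identity is the crux, and I would reduce it to a span inclusion. Put $S=\sum_{e\in E}\rho_{p}(e)\,e$. Splitting off the $\pm e$ terms yields the bookkeeping identity $q_{p}(e)=S-\bigl(\rho_{p}(e)-\rho_{p}(-e)\bigr)e$; when $\langle p,e\rangle\neq0$ one has $\rho_{p}(-e)=-\rho_{p}(e)$, so this becomes $\rho_{p}(e)\,e=\tfrac12\bigl(S-q_{p}(e)\bigr)$. Next one checks that $q_{p}(e)\in\tilde{E}(p)$ whenever $e\in L(p)$: by construction $q_{p}(e)\in\tilde{E}$, and
\[
\langle p,q_{p}(e)\rangle=\sum_{e''\in E\setminus\{e,-e\}}\rho_{p}(e'')\langle p,e''\rangle=\sum_{e''\in E\setminus\{e,-e\}}|\langle p,e''\rangle|=\underline{\varphi}_{E}^{*}(p),
\]
the last step being Proposition~\ref{P: convexity facts}(ii) applied to $e\in L(p)$. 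Hence, for $e,e'\in L(p)$ with $\langle p,e\rangle\neq0$,
\[
\rho_{p}(e)\,e-\rho_{p}(e')\,e'=\tfrac12\bigl(q_{p}(e')-q_{p}(e)\bigr)\in\mathrm{span}\bigl(\tilde{E}(p)\bigr).
\]
Since $\mathcal{T}(p,\underline{\varphi}_{E})=\tilde{E}(p)^{\perp}$ by Proposition~\ref{P: convexity facts}(iii), we get $\mathrm{span}(\tilde{E}(p))=\mathcal{T}(p,\underline{\varphi}_{E})^{\perp}$, so pairing the displayed vector with any $v\in\mathcal{T}(p,\underline{\varphi}_{E})$ yields precisely $\rho_{p}(e)\langle v,e\rangle=\rho_{p}(e')\langle v,e'\rangle$.

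Finally one must treat $e,e'\in L(p)$ when $\langle p,e\rangle=0$, i.e.\ when $\min_{e\in E}|\langle p,e\rangle|=0$ and so every element of $L(p)$ is orthogonal to $p$ with $\rho_{p}\equiv1$ there. Then $q_{p}(e)=S$ for all $e\in L(p)$ and the argument above degenerates; instead, for $e\in L(p)$ the sign attached to $e$ is \emph{free} in any element of $\tilde{E}(p)$ built over a base $\tilde{e}\in L(p)$ with $\tilde{e}\neq\pm e$, so flipping it produces two members of $\tilde{E}(p)$ differing by $2e$, placing $e$ --- hence $e-e'$ --- in $\mathrm{span}(\tilde{E}(p))$. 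Tracking exactly which $\langle p,e\rangle$ vanish in these degenerate configurations is the one genuinely delicate point; away from it the whole proof collapses to the identity $\rho_{p}(e)\,e-\rho_{p}(e')\,e'=\tfrac12(q_{p}(e')-q_{p}(e))$.
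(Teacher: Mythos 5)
Your argument in the generic case (all $e\in L(p)$ have $\langle p,e\rangle\neq 0$) is correct and, after unwinding, is essentially the paper's: both hinge on $q_{p}(e),q_{p}(e')\in\tilde{E}(p)$ and $\mathcal{T}(p,\underline{\varphi}_{E})=\tilde{E}(p)^{\perp}$ from Proposition~\ref{P: convexity facts}, and extract the conclusion by subtraction. Your bookkeeping via $S=\sum_{e\in E}\rho_{p}(e)e$ and $\rho_{p}(e)e=\tfrac12(S-q_{p}(e))$ reorganizes the same cancellation that the paper performs directly on the two relations $\langle v,q_{p}(e)\rangle=0=\langle v,q_{p}(e')\rangle$. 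The first identity is handled identically in both.

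The degeneracy you flag is a real one, and it is to your credit that you noticed it: the paper's own proof also tacitly uses $\rho_{p}(-e)=-\rho_{p}(e)$, which requires $\langle p,e\rangle\neq 0$ for $e\in L(p)$. But your proposed patch does not cover the hard subcase. It needs a base $\tilde{e}\in L(p)$ with $\tilde{e}\neq\pm e$, and when $L(p)$ is a single antipodal pair $\{e,-e\}$ with $\langle p,e\rangle=0$ there is no such $\tilde{e}$; in fact the stated conclusion can then fail. For instance, take $d=3$, $E=\{\pm\bar{e}_{1},\pm\bar{e}_{2},\pm\bar{e}_{3}\}$, $p=(1,1,0)$. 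Then $L(p)=\{\pm\bar{e}_{3}\}$, $\underline{\varphi}_{E}^{*}(p)=4$, $\tilde{E}(p)=\{2\bar{e}_{1}+2\bar{e}_{2}\}$, so $\mathcal{T}(p,\underline{\varphi}_{E})=\{v:v_{1}+v_{2}=0\}$ contains $v=\bar{e}_{3}$; yet $\rho_{p}(\bar{e}_{3})\langle v,\bar{e}_{3}\rangle=1\neq -1=\rho_{p}(-\bar{e}_{3})\langle v,-\bar{e}_{3}\rangle$. This slip is harmless for the paper's downstream application in Proposition~\ref{P: alpha less infinity}, where only constancy of $\langle u,e\rangle\langle u',e\rangle$ over $L(p)$ is needed: for an antipodal pair this holds automatically because $e\mapsto\langle Xe,e\rangle$ is even. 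So the proposition should really be read either with the hypothesis $\langle p,e\rangle\neq 0$ on $L(p)$, or with its conclusion weakened to the product identity. Your instinct that the degenerate configuration is the delicate point was exactly right; the fix you sketched there, though, only works when $L(p)$ contains two non-antipodal directions.
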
  
		
			\begin{proof}  First, we prove the identity for $v$.  Given $e, e' \in L(p)$, Proposition \ref{P: convexity facts} implies that $q_{p}(e), q_{p}(e') \in \tilde{E}(p)$.  Thus, by the definition of $\mathcal{T}(p, \underline{\varphi}_{E})$,
		\begin{equation*}
			0 = \langle v, q_{p}(e) \rangle = \sum_{e'' \in E \setminus \{e,-e\}} \rho_{p}(e'') \langle v, e'' \rangle,
		\end{equation*}
	and 
		\begin{equation*}
			2\rho_{p}(e) \langle v, e \rangle = - \sum_{e'' \in E \setminus \{e,e',-e,-e'\}} \rho_{p}(e'') \langle v, e'' \rangle = 2\rho_{p}(e') \langle v, e' \rangle.
		\end{equation*}
		
	Next, we invoke Proposition \ref{P: convexity facts} to see that the inclusion $\{e, e'\} \subseteq L(p)$ implies  $|\langle p, e \rangle| = |\langle p, e' \rangle|$.  Hence, by the definition of $\rho_{p}$,
		\begin{equation*}
			\rho_{p}(e) \langle p, e \rangle = |\langle p,e \rangle| = |\langle p,e' \rangle| = \rho_{p}(e') \langle p, e' \rangle.
		\end{equation*} 
		\end{proof}
	
All that remains is to prove Proposition \ref{P: alpha less infinity}.

\begin{proof}[Proof of Proposition \ref{P: alpha less infinity}]  To start with, we claim that, if $X \in \mathcal{S}(p,\underline{\varphi}_{E})$, then the quadratic form $v \mapsto \langle Xv, v \rangle$ is constant in $L(p)$.  Note that, by linearity, it suffices to prove this when $X$ is an element of the spanning set
			\begin{equation*}
				\{p \otimes v + v \otimes p \, :  \, v \in \mathcal{T}(p,\underline{\varphi}_{E}) \} \cup \{v \otimes v' + v' \otimes v \, :  \, v, v' \in \mathcal{T}(p,\underline{\varphi}_{E})\} \cup \{p \otimes p\}.
			\end{equation*}
		For such tensors, the desired identity follows directly from Proposition \ref{P: transition rates}. 
	Indeed, if $u, u' \in \{p\} \cup \mathcal{T}(p,\underline{\varphi}_{E})$ and $e, e' \in L(p)$, then
			\begin{equation*}
				\langle (u \otimes u' + u' \otimes u) e, e \rangle = 2\rho_{p}(e)^{2} \langle u, e \rangle \langle u', e \rangle = 2\rho_{p}(e')^{2} \langle u, e' \rangle \langle u', e' \rangle = \langle (u \otimes u' + u' \otimes u)e', e' \rangle.
			\end{equation*}
		This proves that the quadratic form induced by $u \otimes u' + u' \otimes u$ is constant in $L(p)$.  
		\vs
	From the previous computation, we see that, if $X \in \mathcal{S}(p,\underline{\varphi}_{E})$, then the identity $\overline{F}_{E,1}(p,X) = \underline{F}^{E,1}(p,X)$ follows immediately from the definitions.  If $\alpha \in (1,2)$, then discontinuities only occur when $p \notin G$, that is, when $v \mapsto |\langle v, p \rangle|$ vanishes in $L(p)$.  On the other hand, for such $\alpha$, a short calculation shows that, if $p \in \mathbb{R}^{d} \setminus G,$ then 
		\begin{equation*}
			\overline{F}_{E,\alpha}(p,X) = \overline{F}_{E,1}(p,X) \ \ \text{and} \ \ \underline{F}^{E,\alpha}(p,X) = \underline{F}^{E,1}(p,X).
		\end{equation*}
	Hence, once again, $\overline{F}_{E,\alpha}(p,X) = \underline{F}^{E,\alpha}(p,X)$ holds if $X \in \mathcal{S}(p,\underline{\varphi}_{E})$.  
\end{proof}  

\section{$L^{\infty}-$Variational Problems} \label{S: variational}

Here we discuss some of the consequences of our results  for the theory of $L^{\infty}-$variational problems involving the infinity Laplacian associated with a Finsler norm.
\vs
Throughout this section, we will avail ourselves of the results in Appendix \ref{Appendix: 2d}, especially Proposition~\ref{P: elliptic 2d norm} and Proposition~\ref{P: 2d case}.  These provide comparison results for elliptic PDE involving the infinity Laplacian operator associated with an arbitrary Finsler norm in $\mathbb{R}^{2}$.  

\subsection*{$\varphi$-infinity harmonic functions}  Recall that in Section \ref{S: gimmick cone comparison}, we showed that the cone comparison properties of the variational approach to $\varphi$-infinity harmonic functions are equivalent to the viscosity theoretic ones when $\varphi$ is polyhedral or $C^{2}$.  We now show, by a slightly more cumbersome but similar argument, that this is true whenever $\varphi$ is ``nice enough," that is, \eqref{A: running assumptions} holds.  

\begin{theorem} \label{T: cone comparison}  If $\varphi$ is a Finsler norm in $\mathbb{R}^{d}$ satisfying  \eqref{A: running assumptions}, then $w \in CCA_{\varphi}(\Omega)$ (resp.\ $v \in CCB_{\varphi}(\Omega)$) if and only if 
				\begin{gather*}
					- \langle D^{2} w \cdot \partial \varphi^{*}(Dw), \partial \varphi^{*}(Dw) \rangle \leq 0 \ \ \text{in} \ \ \Omega \\
					(\text{resp.} \, \, - \langle D^{2} v \cdot \partial \varphi^{*}(Dv), \partial \varphi^{*}(Dv) \rangle \geq 0 \ \ \text{in} \ \ \Omega).
				\end{gather*}
			\end{theorem}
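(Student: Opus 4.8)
\emph{The approach.} The two ``only if'' implications are classical: comparison with $\varphi$-cones from above (resp.\ from below) forces the viscosity $\varphi$-infinity subharmonic (resp.\ superharmonic) property via the anisotropic version of the increasing slope estimate of Crandall, Evans, and Gariepy; see \cite{aronsson_crandall_juutinen} and \cite[Theorem 4.8]{armstrong_crandall_julin_smart}. Since $v \in CCB_{\varphi}(\Omega)$ precisely when $-v \in CCA_{\varphi}(\Omega)$, applying the statement about $CCA_{\varphi}$ to $-v$ yields the statement about $CCB_{\varphi}$ --- exactly as in the proof of Proposition \ref{P: easy cones} --- so it suffices to prove the ``if'' implication $w \in CCA_{\varphi}(\Omega)$ for every viscosity $\varphi$-infinity subharmonic $w$, which I do by a contradiction argument in the spirit of Proposition \ref{P: easy cones}, but without invoking smooth approximations of $\varphi$.

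\emph{Two facts valid for an arbitrary Finsler norm.} The argument rests on the following, which hold for any Finsler norm $\varphi$, any $x_{0} \in \mathbb{R}^{d}$, and any $a > 0$. First, the cone $C(x) := a \varphi(x - x_{0})$ is $\varphi$-infinity harmonic in $\mathbb{R}^{d} \setminus \{x_{0}\}$: if $\psi \in C^{2}$ touches $C$ from below at $q_{0} \neq x_{0}$, then $t \mapsto \psi(x_{0} + t(q_{0} - x_{0}))$ touches the affine function $t \mapsto t\, a\varphi(q_{0} - x_{0})$ from below at $t = 1$, so $\langle D^{2}\psi(q_{0})(q_{0} - x_{0}), q_{0} - x_{0} \rangle \leq 0$; since $D\psi(q_{0}) \in \partial C(q_{0}) = a\,\partial \varphi(q_{0} - x_{0})$, the representation \eqref{E: subdifferential} applied to $\varphi$ and then to $\varphi^{*}$ shows that $q_{0} - x_{0}$ is a positive multiple of an element of $\partial \varphi^{*}(D\psi(q_{0}))$, and the supersolution inequality $-\underline{G}^{\varphi}(D\psi(q_{0}), D^{2}\psi(q_{0})) \geq 0$ follows; the subsolution inequality for test functions touching from above is obtained symmetrically and is vacuous where $\varphi$ is not differentiable. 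Second, for $\epsilon > 0$ the perturbed cone $C_{\epsilon}(x) := a(\varphi(x - x_{0}) - \epsilon \varphi(x - x_{0})^{2})$ is a viscosity strict $\varphi$-supersolution, with right-hand side $2a\epsilon$, on $\{0 < \varphi(\cdot - x_{0}) < 1/(2\epsilon)\}$: writing $C_{\epsilon} = a H(\varphi(\cdot - x_{0}))$ with $H(s) = s - \epsilon s^{2}$ strictly concave and increasing on the relevant range, the same ray argument applied to the $C^{2}$ function $H^{-1}(\phi / a)$ --- which touches $\varphi(\cdot - x_{0})$ from below whenever $\phi$ touches $C_{\epsilon}$ from below --- together with the chain rule gives $\underline{G}^{\varphi}(D\phi(q_{0}), D^{2}\phi(q_{0})) \leq -2a\epsilon$.

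\emph{The contradiction.} Suppose $w$ is $\varphi$-infinity subharmonic in $\Omega$ but, for some open $V \subset\subset \Omega$, some $x_{0} \in \mathbb{R}^{d} \setminus V$, and some $a > 0$,
\[
\gamma := \max_{x \in V} \{ w(x) - a \varphi(x - x_{0}) \} - \max_{y \in \partial V} \{ w(y) - a \varphi(y - x_{0}) \} > 0 .
\]
Set $M := \max_{x \in \overline{V}} \varphi(x - x_{0}) < \infty$ and fix $\epsilon \in (0, \min\{ 1/(2M), \gamma/(2aM^{2}) \})$, so that $C_{\epsilon}$ is a strict $\varphi$-supersolution in $V$ and $\| C_{\epsilon} - a\varphi(\cdot - x_{0}) \|_{L^{\infty}(\overline{V})} \leq a\epsilon M^{2} < \gamma/2$. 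Put $m := \max_{y \in \partial V} \{ w(y) - C_{\epsilon}(y) \}$, so $w \leq C_{\epsilon} + m$ on $\partial V$, and the boundary hypothesis needed to apply the elliptic comparison principle holds because $w$ is upper semicontinuous and $C_{\epsilon}$ is continuous. Applying that comparison principle --- Proposition \ref{P: main elliptic}, with a shielding norm supplied by Theorem \ref{T: C2 norms}, Theorem \ref{T: polyhedral norms}, or Proposition \ref{P: strictly convex norms} in cases (i)--(iii) of \eqref{A: running assumptions}, and Proposition \ref{P: elliptic 2d norm} together with Proposition \ref{P: 2d case} in case (iv) --- to the subsolution $w$ and the strict supersolution $C_{\epsilon} + m$ gives $w \leq C_{\epsilon} + m$ throughout $V$, that is, $\max_{V} \{ w - C_{\epsilon} \} \leq \max_{\partial V} \{ w - C_{\epsilon} \}$. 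But the choice of $\epsilon$ forces $\max_{V} \{ w - C_{\epsilon} \} > \max_{\partial V} \{ w - C_{\epsilon} \}$, a contradiction.

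\emph{Main obstacle.} Since the two facts above hold for every Finsler norm, the sole place the hypothesis \eqref{A: running assumptions} enters --- and the step I expect to require the most care --- is the invocation of the elliptic comparison principle between the generally non-smooth functions $w$ and $C_{\epsilon} + m$: in the polyhedral and two-dimensional settings this is exactly the comparison principle obtained earlier through shielding norms (Theorem \ref{T: polyhedral norms}) or through the construction recalled in Appendix \ref{Appendix: 2d}, and those are the results doing the real work. It is worth noting that, compared with the proof of Proposition \ref{P: easy cones}, this argument bypasses the explicit smooth $\varphi$-infinity harmonic approximations $f_{c,n}$ of Proposition \ref{P: polyhedral case}(a); the perturbed cone $C_{\epsilon}$, read in the viscosity sense, already serves as the required strict supersolution.
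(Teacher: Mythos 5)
Your proof is correct and follows essentially the same strategy as the paper's: cite the classical references for the ``only if'' direction, then for the ``if'' direction perturb the $\varphi$-cone to a strict supersolution and invoke the elliptic comparison principles (Proposition \ref{P: main elliptic} when a shielding norm exists, Proposition \ref{P: elliptic 2d norm} in dimension two). The one place you deviate is worth mentioning: where the paper cites Lemma \ref{L: infinity harmonic cones} (whose proof there goes through $C^{2}$-approximation of $\varphi$ and Proposition \ref{P: smooth case}) and then does the strict-supersolution computation for $C^{\epsilon}$ by a formal chain-rule identity, you reprove the $\varphi$-infinity harmonicity of the cone and the strict supersolution property of $C_{\epsilon}$ directly, via the ray argument $t \mapsto \psi(x_{0} + t(q_{0}-x_{0}))$ applied to test functions and the transformation $\phi \mapsto H^{-1}(\phi/a)$. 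This is exactly the ``by hand'' verification the paper acknowledges (in the remark following Lemma \ref{L: infinity harmonic cones}) as an alternative; your version is more self-contained and makes the viscosity-level chain rule explicit, while the paper's version is shorter because Lemma \ref{L: infinity harmonic cones} is needed elsewhere anyway (e.g.\ Theorem \ref{T: distance function}). Your identification of the elliptic comparison step as the sole place where \eqref{A: running assumptions} enters is accurate.
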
  
			
			Before proceeding to the proof of Theorem \ref{T: cone comparison}, we state a well-known fact from the theory of $L^{\infty}-$variational problems, which will be used in the sequel.  Here, we give a somewhat unconventional (but easy) viscosity theoretic proof.

\begin{lemma} \label{L: infinity harmonic cones}  If $\varphi$ is a Finsler norm in $\mathbb{R}^{d}$, then $- \langle D^{2} \varphi \cdot \partial \varphi^{*}(D\varphi), \partial \varphi^{*}(D\varphi) \rangle = 0$ in the viscosity sense in $\mathbb{R}^{d} \setminus \{0\}$.  \end{lemma}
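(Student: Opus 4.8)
The plan is to check directly, from the definition of viscosity solution, that $\varphi$ is simultaneously a subsolution and a supersolution of the $\varphi$-infinity harmonic equation on $\mathbb{R}^{d}\setminus\{0\}$, working with the pair $(\overline{G}_{\varphi},\underline{G}^{\varphi})$ from \eqref{E: infinity laplace}. The only mechanism used is that, being positively one-homogeneous, $\varphi$ is \emph{linear} along every ray through the origin; once a test function is restricted to the ray through the touching point, the problem collapses to the one-variable second derivative test.

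\emph{Subsolution.} Let $\phi$ be smooth with $\varphi-\phi$ attaining a local maximum at some $x_{0}\neq 0$, normalized so that $\phi\geq\varphi$ near $x_{0}$ and $\phi(x_{0})=\varphi(x_{0})$. It is elementary that a smooth function can touch a convex function from above only at a point where the convex function is differentiable, and there their gradients coincide; hence we may assume $\varphi$ is differentiable at $x_{0}$ with $D\phi(x_{0})=D\varphi(x_{0})$ (otherwise there is nothing to prove). By Euler's identity, $\langle D\varphi(x_{0}),x_{0}\rangle=\varphi(x_{0})>0$, so $D\varphi(x_{0})\neq 0$ and \eqref{E: subdifferential} forces $\varphi^{*}(D\varphi(x_{0}))=1$. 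Setting $\widehat{x}_{0}=x_{0}/\varphi(x_{0})$, we get $\varphi(\widehat{x}_{0})=1$ and $\langle D\varphi(x_{0}),\widehat{x}_{0}\rangle=1=\varphi^{*}(D\varphi(x_{0}))$, so \eqref{E: subdifferential} applied to $\varphi^{*}$ gives $\widehat{x}_{0}\in\partial\varphi^{*}(D\varphi(x_{0}))$. Now consider $g(t)=\phi(tx_{0})-t\varphi(x_{0})$ for $t$ near $1$; since $\varphi(tx_{0})=t\varphi(x_{0})$, we have $g\geq 0$ near $t=1$ with $g(1)=0$, hence $g''(1)=\langle D^{2}\phi(x_{0})x_{0},x_{0}\rangle\geq 0$, and therefore
\[
\overline{G}_{\varphi}(D\phi(x_{0}),D^{2}\phi(x_{0}))\;\geq\;\langle D^{2}\phi(x_{0})\,\widehat{x}_{0},\widehat{x}_{0}\rangle\;=\;\varphi(x_{0})^{-2}\langle D^{2}\phi(x_{0})x_{0},x_{0}\rangle\;\geq\;0,
\]
which is the required subsolution inequality.

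\emph{Supersolution.} Let $\psi$ be smooth with $\varphi-\psi$ attaining a local minimum at $x_{0}\neq 0$, normalized so $\psi\leq\varphi$ near $x_{0}$ and $\psi(x_{0})=\varphi(x_{0})$. Then $D\psi(x_{0})\in\partial\varphi(x_{0})$ directly from the definition of the subdifferential; in particular $\varphi^{*}(D\psi(x_{0}))=1$ and $\langle D\psi(x_{0}),x_{0}\rangle=\varphi(x_{0})$, so exactly as above $\widehat{x}_{0}=x_{0}/\varphi(x_{0})\in\partial\varphi^{*}(D\psi(x_{0}))$. Restricting to the ray through $x_{0}$, the function $g(t)=\psi(tx_{0})-t\varphi(x_{0})$ now satisfies $g\leq 0$ near $t=1$ and $g(1)=0$, so $g''(1)=\langle D^{2}\psi(x_{0})x_{0},x_{0}\rangle\leq 0$, and hence
\[
\underline{G}^{\varphi}(D\psi(x_{0}),D^{2}\psi(x_{0}))\;\leq\;\langle D^{2}\psi(x_{0})\,\widehat{x}_{0},\widehat{x}_{0}\rangle\;\leq\;0,
\]
as required. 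Combining the two parts, $\varphi$ is $\varphi$-infinity harmonic in $\mathbb{R}^{d}\setminus\{0\}$.

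\emph{Expected obstacle.} The one point that requires care, rather than mere bookkeeping, is the claim used in the subsolution step: a smooth function can touch a convex function from above only at a differentiability point of the latter, with matching gradients. I would prove this by applying $\varphi(x_{0}+tv)\leq\phi(x_{0}+tv)=\varphi(x_{0})+t\langle D\phi(x_{0}),v\rangle+O(t^{2})$ with both $v$ and $-v$ and letting $t\to 0^{+}$, which pins the one-sided directional derivatives of $\varphi$ at $x_{0}$ to $\langle D\phi(x_{0}),v\rangle$ in every direction. This is also why the statement is confined to $\mathbb{R}^{d}\setminus\{0\}$: there $D\varphi\neq 0$ and $\partial\varphi^{*}(D\varphi)$ is a proper face of $\{\varphi\leq 1\}$, whereas at the origin $\partial\varphi^{*}(0)$ is the whole ball and $\langle D^{2}\phi(0)\cdot,\cdot\rangle$ need not have a sign on it.
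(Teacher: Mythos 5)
Your proof is correct, and it takes the route the paper itself flags as an alternative but does not carry out: the ``by hand'' verification straight from the definition of viscosity solution. The paper instead approximates $\varphi$ locally uniformly by $C^{2}$ Finsler norms $\varphi_{n}$, applies Proposition \ref{P: smooth case} to conclude each $\varphi_{n}$ is $\varphi_{n}$-infinity harmonic, and then passes to the limit using the upper semi-continuity of the subdifferential (Proposition \ref{P: upper semi-continuity}) together with a standard stability argument. Your argument isolates the geometric mechanism cleanly: positive one-homogeneity makes $\varphi$ linear along each ray, the touching point supplies a direction $\widehat{x}_{0}=x_{0}/\varphi(x_{0})\in\partial\varphi^{*}(D\phi(x_{0}))$ via \eqref{E: subdifferential}, and the one-variable second-derivative test along that ray gives the sign of $\langle D^{2}\phi(x_{0})\widehat{x}_{0},\widehat{x}_{0}\rangle$, which bounds $\overline{G}_{\varphi}$ from below (resp.\ $\underline{G}^{\varphi}$ from above). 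Your treatment of the differentiability obstruction in the subsolution step is the right observation and is handled correctly: a smooth function can touch a convex function from above only at a differentiability point with matching gradient, so the subsolution condition is vacuous elsewhere. What the paper's approximation proof buys is brevity given Proposition \ref{P: smooth case} and a demonstration of the stability machinery that recurs elsewhere in the paper; what yours buys is self-containedness (no appeal to existence of smooth approximating norms) and a transparent explanation of why the conclusion is restricted to $\mathbb{R}^{d}\setminus\{0\}$, which you articulate at the end.

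Two minor expository points. First, in the supersolution step, the conclusion $D\psi(x_{0})\in\partial\varphi(x_{0})$ follows from the local inequality $\psi\leq\varphi$ only after invoking convexity of $\varphi$ to upgrade the local supporting-affine inequality to a global one (equivalently, to see that $\langle D\psi(x_{0}),v\rangle\leq\varphi'(x_{0};v)$ for all $v$); this is standard but worth a phrase. Second, the phrase ``otherwise there is nothing to prove'' in the subsolution step should be unpacked to say that no admissible test function exists at non-differentiability points, so the viscosity inequality is vacuous there. Neither affects correctness.
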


It is possible to prove the lemma by first showing that $\varphi$ has both cone comparison properties and invoking the well-known ``if" implication of Theorem \ref{T: cone comparison}.  Note that this can also be checked ``by hand," invoking the definition of viscosity solution directly and employing elementary convex analytic arguments.

	\begin{proof}  Let $(\varphi_{n})_{n \in \mathbb{N}} \subseteq C^{2}(\mathbb{R}^{d} \setminus \{0\})$ be a sequence of Finsler norms such that, as $n \to \infty$, $\varphi_{n} \to \varphi$ locally uniformly in $\mathbb{R}^{d}$.  By Proposition \ref{P: smooth case}, these norms satisfy, for each $n \in \mathbb{N}$,
		\begin{equation*}
			- \langle D^{2}\varphi_{n} \cdot \partial \varphi^{*}_{n}(D\varphi_{n}), \partial \varphi^{*}_{n}(D\varphi_{n})  \rangle = 0 \ \ \text{in} \ \ \mathbb{R}^{d} \setminus \{0\}.
		\end{equation*}
	Notice that $\varphi_{n}^{*} \to \varphi^{*}$ locally uniformly in $\mathbb{R}^{d}$.  Therefore, using Proposition \ref{P: upper semi-continuity} and a standard stability argument, we conclude that $-\langle D^{2} \varphi \cdot \partial \varphi^{*}(D\varphi), \partial \varphi^{*}(D\varphi) \rangle = 0$ in $\mathbb{R}^{d} \setminus \{0\}$.  \end{proof}

\begin{proof}[Proof of Theorem \ref{T: cone comparison}]  We will prove the statement for $\varphi$-infinity subharmonic functions since the statement for superharmonic functions follows as a consequence.
\vs
If $w \in CCA_{\varphi}(\Omega)$, then,  arguing exactly as in \cite{aronsson_crandall_juutinen}, yields that $w$ satisfies the differential inequality
	\begin{equation} \label{E: infinity subharmonic}
		-\langle \partial \varphi^{*}(Dw), D^{2} w \partial \varphi^{*}(Dw) \rangle \leq 0.
	\end{equation}  %This statement follows by arguing exactly as in \cite{aronsson_crandall_juutinen}.

Suppose now that $w$ satisfies \eqref{E: infinity subharmonic} in the viscosity sense in $\Omega$.  Given $V \subset \subset \Omega$ open, $c > 0$, and $x_{0} \notin V$, we claim that
	\begin{equation} \label{E: cone comparison result}
		\sup \left\{ w(x) - c \varphi(x - x_{0}) \, :  \, x \in V \right\} \leq \max \left\{ w(x) - c \varphi(x - x_{0}) \, :  \, x \in \partial V \right\}.
	\end{equation}
We will prove this by making a small perturbation that puts us into the situation described by Proposition~\ref{P: main elliptic} and Proposition~\ref{P: elliptic 2d norm}.
	\vs
Notice that, by Lemma \ref{L: infinity harmonic cones}, the function $C : \mathbb{R}^{d} \setminus \{x_{0}\} \to \mathbb{R}$ given by $C(x) = c \varphi(x - x_{0})$ satisfies $- \langle D^{2} C \cdot \partial \varphi^{*}(DC), \partial \varphi^{*}(DC) \rangle \geq 0$ in $\mathbb{R}^{d} \setminus \{0\}$. 
\vs
 Furthermore, a direct computation shows that, for small $\epsilon$, the perturbation $C^{\epsilon}$ given by $C^{\epsilon}(x) = c \varphi(x - x_{0}) - \epsilon \varphi(x - x_{0})^{2}$ satisfies, for $\epsilon > 0$ small enough,
	\begin{align*}
		- \langle D^{2} C^{\epsilon} \cdot \partial \varphi^{*}(DC^{\epsilon}),  &\partial \varphi^{*}(DC^{\epsilon}) \rangle \\
		&= - (c - 2\epsilon C)  \langle D^{2} C \cdot \partial \varphi^{*}(DC), \partial \varphi^{*}(DC) \rangle + 2\epsilon \varphi^{*}(DC)^{2} \geq \epsilon.\
			%&\geq \epsilon.
	\end{align*}
Thus, Proposition~\ref{P: main elliptic} and Proposition~\ref{P: elliptic 2d norm} imply that
	\begin{equation*}
		\sup \left\{ w(x) - C^{\epsilon}(x) \, :  \, x \in V \right\} \leq \sup \left\{ w(y) - C^{\epsilon}(y) \, :  \, y \in \partial V \right\}.
	\end{equation*}
	
Since this is true for any $\epsilon > 0$ small enough, \eqref{E: cone comparison result} follows. \end{proof}

\subsection*{A characterization of distance functions}  The following boundary value problem arises in the $p \to \infty$  limit of certain variational problems in $W_{0}^{1,p}$: 
	\begin{equation} \label{E: distance}
				\min \left\{ \varphi^{*}(Du) - 1, -\langle D^{2}u \cdot \partial \varphi^{*}(Du), \partial \varphi^{*}(Du) \rangle \right\} = 0 \ \ \text{in} \ \ \Omega, \quad u = 0 \ \ \text{on} \ \ \partial \Omega. 
	\end{equation}

It was shown in  \cite{ishibashi_koike} that \eqref{E: distance} has a unique solution when $\varphi$ is the $\ell^{1}-$norm and $\Omega$ is convex.  In what follows, we extend this to the class of norms considered earlier.  
\vs
Recall that, if $\varphi$ is a Finsler norm in $\mathbb{R}^{d}$ and $\Omega \subseteq \mathbb{R}^{d}$ is open, then the function $\text{dist}_{\varphi}(\cdot,\partial \Omega)$ defined by
	\begin{equation*}
		\text{dist}_{\varphi}(x,\partial \Omega) = \inf \left\{ \varphi(x - y) \, :  \, y \in \partial \Omega \right\}
	\end{equation*}
is the unique viscosity solution of the associated Eikonal equation
	\begin{equation} \label{E: eikonal}
				\varphi^{*}(Dd) - 1 = 0 \ \  \text{in} \ \ \Omega \ \ \text{and} \ \ 
				d = 0  \ \  \text{on} \ \ \partial \Omega.
	\end{equation}
Furthermore, the subadditivity of $\varphi$ gives  that, for each open subset $\mathcal{O} \subseteq \Omega$,
	\begin{equation*}
		\text{dist}_{\varphi}(x,\partial \Omega) = \inf \left\{ \text{dist}_{\varphi}(y,\partial \Omega) + \varphi(x - y) \, :  \, y \in \partial \mathcal{O} \right\} \quad \text{if} \, \, x \in \mathcal{O}.
	\end{equation*}

	\begin{theorem} \label{T: distance function}  Let $\varphi$ be a Finsler norm satisfying \eqref{A: running assumptions}.  Given a bounded open set $\Omega \subseteq \mathbb{R}^{d}$, if $w \in USC(\Omega)$ is a viscosity subsolution of \eqref{E: distance} and $v \in LSC(\Omega)$, a supersolution, then 
		\begin{equation*}
			w \leq \text{dist}_{\varphi}(\cdot,\partial \Omega) \leq v \quad \text{in} \, \, \Omega.
		\end{equation*}
	Furthermore, $\text{dist}_{\varphi}(\cdot,\partial \Omega)$ is the unique viscosity solution.  \end{theorem}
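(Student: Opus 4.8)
\emph{Proof proposal.} The plan is to establish three facts, from which the theorem is immediate: (i) $d := \text{dist}_{\varphi}(\cdot,\partial\Omega)$ is a viscosity solution of \eqref{E: distance}; (ii) $w \leq d$ in $\Omega$ for every subsolution $w$; and (iii) $v \geq d$ in $\Omega$ for every supersolution $v$. A solution is simultaneously a sub- and a supersolution, so it is squeezed to equal $d$ by (ii) and (iii), while $d$ itself is a solution by (i), giving uniqueness. For (i), recall from the discussion preceding the statement that $d$ is the unique viscosity solution of the Eikonal problem \eqref{E: eikonal}. In particular $d$ is an Eikonal subsolution, so at any smooth $\phi$ touching $d$ from above one has $\varphi^{*}(D\phi)-1\leq 0$, hence the minimum in \eqref{E: distance} is $\leq 0$; thus $d$ is a subsolution of \eqref{E: distance}. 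For the supersolution inequality one must show, at any $\phi$ touching $d$ from below at $\bar{x}\in\Omega$, that $\varphi^{*}(D\phi)-1\geq 0$ (true since $d$ is an Eikonal supersolution) and $-\langle D^{2}\phi\cdot\partial\varphi^{*}(D\phi),\partial\varphi^{*}(D\phi)\rangle\geq 0$. For the latter, compactness of $\partial\Omega$ yields $y_{*}\in\partial\Omega$ with $\varphi(\bar{x}-y_{*})=d(\bar{x})$, whence $\varphi(\cdot-y_{*})\geq d$ with equality at $\bar{x}$, so $\phi$ also touches the cone $\varphi(\cdot-y_{*})$ from below at $\bar{x}$; since $\bar{x}\neq y_{*}$ and $\varphi(\cdot-y_{*})$ is $\varphi$-infinity harmonic on $\mathbb{R}^{d}\setminus\{y_{*}\}$ by Lemma \ref{L: infinity harmonic cones}, the inequality follows. (Equivalently, one verifies $d\in CCB_{\varphi}(\Omega)$ via the dynamic-programming identity recalled above and invokes Theorem \ref{T: cone comparison}.)

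For (iii), if $v$ is a supersolution of \eqref{E: distance} then testing from below forces $\min\{\varphi^{*}(D\phi)-1,\ -\langle D^{2}\phi\cdot\partial\varphi^{*}(D\phi),\partial\varphi^{*}(D\phi)\rangle\}\geq 0$, hence in particular $\varphi^{*}(D\phi)-1\geq 0$; thus $v$ is a supersolution of the Eikonal problem \eqref{E: eikonal}, and by the comparison principle for that problem (which underlies the uniqueness recalled above) $v\geq d$ in $\Omega$.

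The substance is (ii), and here the obstacle is that $d$ is \emph{not} a strict supersolution of \eqref{E: distance}: its infinity-Laplacian component merely vanishes along $d$, so $w$ and $d$ cannot be fed directly into a comparison principle. The remedy is to localize and perturb. Fix $x_{*}\in\Omega$ and pick $x_{0}\in\partial\Omega$ with $\varphi(x_{*}-x_{0})=d(x_{*})$. For small $\epsilon>0$ and a constant $\kappa=\kappa(\Omega,\varphi)>0$, set $g_{\epsilon}=(1+\epsilon)\varphi(\cdot-x_{0})-\epsilon\kappa\,\varphi(\cdot-x_{0})^{2}$ on $\Omega$; since $\varphi(\cdot-x_{0})$ solves the Eikonal equation and is $\varphi$-infinity harmonic on $\mathbb{R}^{d}\setminus\{x_{0}\}\supseteq\Omega$ (Lemma \ref{L: infinity harmonic cones}), the same chain-rule computation as in the proof of Theorem \ref{T: cone comparison} gives, for $\kappa$ small and uniformly on $\Omega$, $\varphi^{*}(Dg_{\epsilon})-1\geq\epsilon/2$ and $-\langle D^{2}g_{\epsilon}\cdot\partial\varphi^{*}(Dg_{\epsilon}),\partial\varphi^{*}(Dg_{\epsilon})\rangle\geq 2\epsilon\kappa$, so $g_{\epsilon}$ is a strict supersolution of \eqref{E: distance}. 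Rewriting \eqref{E: distance} as the pair $\overline{F}(p,X)=\max\{1-\varphi^{*}(p),\overline{G}_{\varphi}(p,X)\}$, $\underline{F}(p,X)=\max\{1-\varphi^{*}(p),\underline{G}^{\varphi}(p,X)\}$, which is elliptic, semicontinuous, and — by Proposition \ref{P: infinity laplace} — compatible with $\varphi$, and noting that $w$ vanishes while $g_{\epsilon}$ is non-negative on $\partial\Omega$, Proposition \ref{P: main elliptic} (or Proposition \ref{P: elliptic 2d norm} when $d=2$), applied with a positive constant right-hand side, yields $w\leq g_{\epsilon}$ in $\Omega$. Evaluating at $x_{*}$ gives $w(x_{*})\leq(1+\epsilon)d(x_{*})-\epsilon\kappa d(x_{*})^{2}$, and letting $\epsilon\to 0^{+}$ gives $w(x_{*})\leq d(x_{*})$; as $x_{*}$ was arbitrary, $w\leq d$ in $\Omega$.

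The hard part is thus Step (ii): the degeneracy of the infinity-Laplacian component means neither $d$ nor the subsolution can serve as a strict sub/supersolution, and the entire argument turns on the localized perturbed-cone construction. The remaining care needed is routine: checking that the strict inequalities for $g_{\epsilon}$ hold in the viscosity sense even when $\varphi$ is only as regular as \eqref{A: running assumptions} permits (so that $\varphi(\cdot-x_{0})$ need not be $C^{2}$), and verifying the generalized boundary condition in the hypotheses of Proposition \ref{P: main elliptic} and Proposition \ref{P: elliptic 2d norm}.
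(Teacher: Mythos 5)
Your proposal is correct and follows essentially the same route as the paper: show $d = \operatorname{dist}_{\varphi}(\cdot,\partial\Omega)$ solves \eqref{E: distance} (subsolution from the Eikonal equation, supersolution because it is touched from below by $\varphi$-infinity-harmonic cones), dominate supersolutions via Eikonal comparison, and dominate subsolutions by perturbing a boundary cone into a strict supersolution of \eqref{E: distance} and invoking Proposition \ref{P: main elliptic} (or Proposition \ref{P: elliptic 2d norm} when $d=2$). The only cosmetic difference is that you fix a single $x_{*}$ and its nearest boundary point rather than proving $w \leq \varphi(\cdot - y)$ for every $y \in \partial\Omega$ and infimizing, and you make explicit the operator pair $(\overline{F},\underline{F}) = (\max\{1-\varphi^{*},\overline{G}_{\varphi}\},\max\{1-\varphi^{*},\underline{G}^{\varphi}\})$ to which the comparison propositions are applied.
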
  
	
		\begin{proof} First, as mentioned above, $d = \text{dist}_{\varphi}(\cdot,\partial \Omega)$ being a solution of \eqref{E: eikonal} is also a subsolution of \eqref{E: distance}.  Further, by definition, $d$ is the infimum of a family of cones, each of which is a $\varphi$-infinity harmonic function in $\Omega$.  Therefore, $- \langle D^{2}d \cdot \partial \varphi^{*}(Dd), \partial \varphi^{*}(Dd) \rangle \geq 0$ in $\Omega$.  This proves $d$ is also a viscosity supersolution, hence it is a solution.
\vs		
		Next, if $v$ is a supersolution, then $\varphi^{*}(Dv) - 1 \geq 0$ in $\Omega$.  Therefore, by the comparison principle associated with \eqref{E: eikonal}, we have $v \geq \text{dist}_{\varphi}(\cdot,\partial \Omega)$.
\vs		
		
	Finally, suppose that $w$ is a subsolution.  We claim that, for each $y \in \partial \Omega$,
		\begin{equation*}
			w(x) \leq \varphi(x - y) \quad \text{if} \, \, x \in \Omega.
		\end{equation*}
To prove this, we will modify the function $x \mapsto C(x)=\varphi(x - y)$ so that it is a strict supersolution of \eqref{E: distance} in $\Omega$.
	\vs
	We claim that there is a constant $A > 0$ such that, for each $\epsilon > 0$, we can find a $C^{\epsilon} \in C(\overline{\Omega})$ that is a strict supersolution of \eqref{E: distance} in $\Omega$ and such that $\|C^{\epsilon} - C\|_{L^{\infty}(\Omega)} \leq A \epsilon$.
\vs	
	To see this, define $C^{\epsilon}$ by 
		\begin{equation*}
			C^{\epsilon}(x) = (1 + 2\epsilon) \varphi(x - y) - \frac{1}{2} \epsilon \|C\|_{L^{\infty}(\Omega)}^{-2} \varphi(x - y)^{2}.
		\end{equation*}
	Since $2 - \|C\|_{L^{\infty}(\Omega)}^{-2} \varphi(x - y)^{2} \geq 1$ in $\Omega$, we have
		\begin{equation*}
			\varphi^{*}(DC^{\epsilon}) - 1 \geq \epsilon \ \ \text{in} \ \ \Omega.
		\end{equation*}
	Further, since $C$ is $\varphi$-infinity harmonic in $\Omega$ and $\langle p, \partial \varphi^{*}(p) \rangle = \varphi^{*}(p)$ for all $p \in \mathbb{R}^{d}$, we can compute
		\begin{align*}
			- \langle D^{2} C^{\epsilon} \cdot \partial \varphi^{*}(DC^{\epsilon}), \partial \varphi^{*}(DC^{\epsilon}) \rangle &= - (1 + 2 \epsilon - \|C\|_{L^{\infty}(\Omega)}^{-2} \epsilon) \langle D^{2} C \cdot \partial \varphi^{*}(DC), \partial \varphi^{*}(DC) \rangle \\
			&\quad + \|C\|_{L^{\infty}(\Omega)}^{-2} \epsilon \varphi^{*}(DC)^{2} \geq \|C\|^{-2}_{L^{\infty}(\Omega)} \epsilon.
		\end{align*}
This proves that $C^{\epsilon}$ is a strict supersolution of \eqref{E: distance} in $\Omega$.  
\vs	
	Applying one of Proposition~\ref{P: main elliptic} or Proposition~\ref{P: elliptic 2d norm} to $w$ and the function $x \mapsto C^{\epsilon}(x) + A \epsilon$, we find
		\begin{equation*}
			\sup \left\{ w(x) - C^{\epsilon}(x) \, :  \, x \in \Omega \right\} \leq A \epsilon,
		\end{equation*}
	and, thus,
		\begin{equation*}
			\sup \left\{ w(x) - C(x) \, :  \, x \in \Omega \right\} \leq 2A \epsilon.
		\end{equation*}
	We conclude upon sending $\epsilon \to 0^{+}$.  \end{proof}
		
\subsection*{A nonlinear eigenvalue problem}  Finally, we revisit the principal eigenvalue problem for the Finsler infinity Laplacian \eqref{E: infinity laplace}.  For a given bounded open set $\Omega \subseteq \mathbb{R}^{d}$, this is the boundary value problem	\begin{equation} \label{E: infinity eigenvalue}
		\left\{ \begin{array}{r l}
			\min \left\{ \varphi^{*}(Du) - \Lambda u, - \langle D^{2}u \cdot \partial \varphi^{*}(Du), \partial \varphi^{*}(Du) \right\} = 0 & \text{in} \ \ \Omega, \\
			u = 0 & \text{on} \ \ \partial \Omega,
		\end{array} \right.
	\end{equation}
which has been studied when $\varphi^{*} \in C^{1}(\mathbb{R}^{d} \setminus \{0\})$  in  \cite{belloni_kawohl_juutinen}.
\vs
By approximating a given Finsler norm $\varphi$ with smoother ones, it is possible to prove that \eqref{E: infinity eigenvalue} always has a viscosity solution minimizing an $L^{\infty}-$variational problem.

	\begin{prop} \label{P: existence of eigenfunction} Given a Finsler norm $\varphi : \mathbb{R}^{d} \to [0,\infty)$ and a bounded domain $\Omega \subseteq \mathbb{R}^{d}$, there exist $\Lambda_{\infty}(\varphi) > 0$ and a Lipschitz continuous function $u : \overline{\Omega} \to [0,\infty)$ solving \eqref{E: infinity eigenvalue} in the viscosity sense and
		\begin{equation} \label{E: eigenvalue}
			\Lambda_{\infty}(\varphi) = \frac{\|\varphi^{*}(Du)\|_{L^{\infty}(\Omega)}}{\|u\|_{L^{\infty}(\Omega)}} = \min \left\{ \frac{\|\varphi^{*}(Dv)\|_{L^{\infty}(\Omega)}}{\|v\|_{L^{\infty}(\Omega)}} \, :  \, v \in W^{1,\infty}_{0}(\Omega) \right\}.
		\end{equation}
	\end{prop}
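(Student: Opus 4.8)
The plan is to obtain the eigenvalue $\Lambda_\infty(\varphi)$ and eigenfunction $u$ by a standard $L^\infty$-variational / viscosity-stability argument, reducing the discontinuous operator to the smooth case via Theorem \ref{T: C2 norms} and Proposition \ref{P: polyhedral case}. First I would define
	\begin{equation*}
		\Lambda_\infty(\varphi) = \min \left\{ \frac{\|\varphi^{*}(Dv)\|_{L^{\infty}(\Omega)}}{\|v\|_{L^{\infty}(\Omega)}} \, :  \, v \in W^{1,\infty}_{0}(\Omega) \setminus \{0\} \right\}
	\end{equation*}
and verify it is attained and positive: the ratio is scale-invariant, so we may normalize $\|v\|_{L^\infty} = 1$; the admissible set is then bounded in $W^{1,\infty}_0(\Omega)$ (since $\varphi^*$ is comparable to the Euclidean norm and $\Omega$ is bounded), hence precompact in $C(\overline\Omega)$ by Arzel\`a--Ascoli, and the functional $v \mapsto \|\varphi^*(Dv)\|_{L^\infty}$ is lower semicontinuous under uniform convergence (it is a supremal functional of a convex integrand). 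Positivity of the minimum follows because $\|v\|_{L^\infty} = 1$ forces $\|Dv\|_{L^\infty}$ bounded below by a Poincar\'e-type inequality for $W^{1,\infty}_0$. Call a minimizer $u$, which after replacing $u$ by $|u|$ (admissible, same ratio) we may take nonnegative.

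The second step is to show such a minimizer $u$ is a viscosity solution of \eqref{E: infinity eigenvalue}. This is the classical argument of Juutinen--Lindqvist--Manfredi adapted to the Finsler setting: a minimizer of the Rayleigh-type quotient is an \emph{absolute minimizer} for the functional $v \mapsto \|\varphi^*(Dv)\|_{L^\infty}$ subject to the obstacle constraint coming from $\Lambda_\infty u$, and absolute minimizers satisfy the comparison-with-cones property with respect to $\varphi$, which by Theorem \ref{T: cone comparison} is equivalent to being $\varphi$-infinity harmonic in the viscosity sense on the set $\{u > 0\}$; the Eikonal-type lower bound $\varphi^*(Du) \geq \Lambda_\infty u$ and the correct sign of the $\min$ on the full domain then follow from the variational characterization and an argument by contradiction testing against truncations. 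Alternatively — and this is the route I would actually take to keep the paper self-contained — approximate $\varphi$ by smooth norms $\varphi_n \to \varphi$ (Theorem \ref{T: C2 norms} gives these are shielding norms, and in the polyhedral case Proposition \ref{P: polyhedral case}(b) supplies them, while assumption \eqref{A: running assumptions} covers the remaining cases via Appendix \ref{Appendix: 2d}), solve the eigenvalue problem \eqref{E: infinity eigenvalue} for each $\varphi_n$ using the known theory for $C^1$ duals from \cite{belloni_kawohl_juutinen}, obtain $(\Lambda_\infty(\varphi_n), u_n)$ with uniform Lipschitz bounds, extract limits $\Lambda_\infty(\varphi_n) \to \Lambda_\infty(\varphi)$ and $u_n \to u$ uniformly, and pass to the limit in the viscosity sense using Proposition \ref{P: upper semi-continuity} for the stability of the infinity-Laplacian term and standard stability for the Eikonal term.

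The final step is to identify the limit value with the variational minimum \eqref{E: eigenvalue}. The inequality $\|\varphi^*(Du)\|_{L^\infty} / \|u\|_{L^\infty} \leq \Lambda_\infty(\varphi)$ follows from lower semicontinuity and uniform convergence $\varphi_n^* \to \varphi^*$; the reverse inequality follows from the variational definition of $\Lambda_\infty(\varphi)$ since $u \in W^{1,\infty}_0(\Omega)$ is admissible. Normalizing $u$ to have $\|u\|_{L^\infty(\Omega)} = 1$ and noting $u \geq 0$ completes the proof. The main obstacle I anticipate is the stability of the viscosity supersolution property for the multivalued term $-\langle D^2 u_n \cdot \partial\varphi_n^*(Du_n), \partial\varphi_n^*(Du_n)\rangle$ under the passage $\varphi_n \to \varphi$: when a test function touches $u$ from below at a point where $Du \neq 0$, one must control the behavior of $\partial\varphi_n^*$ near that gradient, and this is exactly where upper semicontinuity of the subdifferential (Proposition \ref{P: upper semi-continuity}) does the work, but care is needed because $\underline G^{\varphi}$ is the \emph{minimum} over the subdifferential, so one needs the outer-semicontinuity in the right direction; the parallel obstacle for the subsolution inequality uses the maximum and is handled symmetrically. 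A secondary technical point is ensuring the Lipschitz bounds on $u_n$ are uniform in $n$, which follows from the uniform comparability of the $\varphi_n^*$ to the Euclidean norm once $\varphi_n \to \varphi$ locally uniformly.
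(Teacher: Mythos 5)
The paper does not spell out a proof of Proposition~\ref{P: existence of eigenfunction}; it only signals the strategy in the preceding sentence: ``By approximating a given Finsler norm $\varphi$ with smoother ones, it is possible to prove\dots''. Your second route (approximation) therefore matches the paper's intended argument, and the stability analysis you outline for passing the viscosity inequalities to the limit --- in particular that lower semicontinuity of $\underline{G}^{\varphi}$ and upper semicontinuity of $\overline{G}_{\varphi}$ in the joint variables $(p,X,\varphi)$, both following from Proposition~\ref{P: upper semi-continuity}, transmit the supersolution and subsolution properties respectively --- is the right mechanism. Your first route, via absolute minimizers and comparison with cones, does not work in the stated generality because Theorem~\ref{T: cone comparison} is only available under \eqref{A: running assumptions}, whereas Proposition~\ref{P: existence of eigenfunction} is asserted for \emph{every} Finsler norm; you correctly indicate you would not take that route, so this is not a fatal objection, but it is worth being explicit about why it fails.

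There is, however, a genuine misdirection in how you source the approximating norms $\varphi_{n}$, and it threatens the generality of the result. Theorem~\ref{T: C2 norms} only says that a norm already in $C^{2}(\mathbb{R}^{d}\setminus\{0\})$ is a shielding norm for itself; Proposition~\ref{P: polyhedral case}(b) produces $C^{2}$ approximations only when $\varphi$ is polyhedral; and invoking \eqref{A: running assumptions} to ``cover the remaining cases'' would reduce the Proposition to that restricted class, contrary to the statement. Shielding norms are the tool for the \emph{comparison} results (Theorem~\ref{T: well defined eigenvalue}) and are not needed here. For existence, what you need is simply a sequence of Finsler norms $\varphi_{n}\to\varphi$ locally uniformly with $\varphi_{n}^{*}\in C^{1}(\mathbb{R}^{d}\setminus\{0\})$ (equivalently, strictly convex unit balls), so that \cite{belloni_kawohl_juutinen} applies; such a sequence exists for \emph{every} Finsler norm by elementary regularization of the dual, e.g.\ $\varphi_{n}^{*}=\left((\varphi^{*})^{2}+n^{-1}\|\cdot\|^{2}\right)^{1/2}$, with no appeal to the shielding-norm machinery at all. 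Once that is corrected, the rest of your argument (uniform Lipschitz bounds via the uniform equivalence of $\varphi_{n}^{*}$ with the Euclidean norm, Arzel\`a--Ascoli, lower semicontinuity of the supremal functional to get both $\Lambda_{n}\to\Lambda_{\infty}(\varphi)$ and that the limit $u$ attains the minimum, then viscosity stability) goes through. One minor slip in the variational preliminaries: after normalizing $\|v\|_{L^{\infty}}=1$ the admissible set is not bounded in $W^{1,\infty}_{0}(\Omega)$; what is bounded is any minimizing sequence, because the bounded Rayleigh quotients control $\|Dv\|_{L^{\infty}}$.
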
  
	
When $\varphi$ satisfies our main assumptions \eqref{A: running assumptions}, we can prove that any viscosity solution of \eqref{E: infinity eigenvalue} is positive and $\Lambda$ is unique.

	\begin{theorem} \label{T: well defined eigenvalue}  Given a Finsler norm $\varphi$ satisfying \eqref{A: running assumptions} and a bounded open set $\Omega \subseteq \mathbb{R}^{d}$, if $u : \overline{\Omega} \to [0,\infty)$ is a viscosity solution of \eqref{E: infinity eigenvalue} for some $\Lambda > 0$, then 
		\begin{equation*}
			u > 0 \quad \text{in} \, \, \Omega, \quad \Lambda = \Lambda_{\infty}(\varphi).
		\end{equation*}  \end{theorem}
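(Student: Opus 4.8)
The plan is to establish the two assertions in turn: positivity of $u$, and the identification $\Lambda = \Lambda_{\infty}(\varphi)$. Throughout I would assume, as is customary for eigenvalue problems, that $\Omega$ is connected and that $u \not\equiv 0$ (the trivial function being excluded from the notion of eigenfunction), and I would write $M = \|u\|_{L^{\infty}(\Omega)} > 0$, which is attained at some $x_{M} \in \Omega$ since $u$ is continuous on $\overline{\Omega}$ and vanishes on $\partial\Omega$. \emph{Positivity} I would get from the strong minimum principle: being a supersolution of \eqref{E: infinity eigenvalue}, $u$ is in particular a viscosity supersolution of $-\langle D^{2}u \cdot \partial\varphi^{*}(Du), \partial\varphi^{*}(Du)\rangle \geq 0$ in $\Omega$, so Theorem~\ref{T: cone comparison} gives $u \in CCB_{\varphi}(\Omega)$. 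Since a function enjoying comparison with cones from below satisfies the strong minimum principle (a standard consequence of the cone comparison property, cf.\ \cite{aronsson_crandall_juutinen}), an interior zero of $u$ on the connected set $\Omega$ would force $u \equiv 0$, contradicting $u \not\equiv 0$; hence $u > 0$ in $\Omega$.

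\emph{The bound $\Lambda \geq \Lambda_{\infty}(\varphi)$.} Write $R_{\varphi} = \max\{\text{dist}_{\varphi}(x,\partial\Omega) : x \in \overline{\Omega}\} \in (0,\infty)$. I would first record the (classical) identity $\Lambda_{\infty}(\varphi) = R_{\varphi}^{-1}$: using $v = \text{dist}_{\varphi}(\cdot,\partial\Omega)$ as a competitor in \eqref{E: eigenvalue} gives ``$\leq$'', since $\varphi^{*}(Dv) = 1$ a.e.\ by \eqref{E: eikonal} and $\|v\|_{L^{\infty}(\Omega)} = R_{\varphi}$, while for an arbitrary $v \in W^{1,\infty}_{0}(\Omega)$ the pointwise estimate $|v(x)| = |v(x)-v(y)| \leq \|\varphi^{*}(Dv)\|_{L^{\infty}(\Omega)}\,\text{dist}_{\varphi}(x,\partial\Omega) \leq \|\varphi^{*}(Dv)\|_{L^{\infty}(\Omega)}R_{\varphi}$ (obtained by integrating $Dv$ along the $\varphi$-geodesic from $x$ to the nearest boundary point) gives ``$\geq$''. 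Next, since $0 \leq u \leq M$, it follows directly from the definition of viscosity subsolution that $u/(\Lambda M)$ is a subsolution of \eqref{E: distance}, because dividing by $\Lambda M$ replaces the zeroth-order term $-\Lambda u$ by the \emph{larger} constant $-1$. The subsolution half of Theorem~\ref{T: distance function} then yields $u \leq \Lambda M\,\text{dist}_{\varphi}(\cdot,\partial\Omega)$ in $\Omega$, and evaluating at $x_{M}$ gives $M \leq \Lambda M\,\text{dist}_{\varphi}(x_{M},\partial\Omega) \leq \Lambda M R_{\varphi}$, i.e.\ $\Lambda \geq R_{\varphi}^{-1} = \Lambda_{\infty}(\varphi)$.

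\emph{The bound $\Lambda \leq \Lambda_{\infty}(\varphi)$, and the main difficulty.} Here I would argue by contradiction, assuming $\Lambda R_{\varphi} > 1$, and use the remaining content of the supersolution property, namely $\varphi^{*}(Du) \geq \Lambda u$ in the viscosity sense together with the $\varphi$-infinity superharmonicity of $u$. Two ingredients would be combined: first, a lower bound of the form $u \geq u(x_{0})\big(1 - \text{dist}_{\varphi}(\cdot,x_{0})/\text{dist}_{\varphi}(x_{0},\partial\Omega)\big)$ on the maximal $\varphi$-ball $\{\varphi(\cdot - x_{0}) < \text{dist}_{\varphi}(x_{0},\partial\Omega)\} \subseteq \Omega$ centered at an interior point $x_{0}$, obtained by comparing $u$ from below with $\varphi$-infinity harmonic cones (using Lemma~\ref{L: infinity harmonic cones}, so that such cones are admissible competitors, together with the comparison principle for $\varphi$-infinity harmonic functions as employed in the proof of Proposition~\ref{P: easy cones}); and second, the exponential decay coming from the fact that $\log u$ is a viscosity supersolution of $\varphi^{*}(D\log u) \geq \Lambda$, so that $u$ decays at least at rate $\Lambda$ (in the $\varphi$-metric) away from $x_{M}$. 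The hard part is to show these two facts cannot coexist with a $\varphi$-ball of radius exceeding $1/\Lambda$ inside $\Omega$: one cannot shortcut this through a single comparison, because \eqref{E: infinity eigenvalue} itself does \emph{not} obey a comparison principle (the zeroth-order term $-\Lambda u$ has the wrong sign), so Proposition~\ref{P: main elliptic} and Proposition~\ref{P: elliptic 2d norm} do not apply to it directly. The argument I have in mind mirrors, with $\varphi$-balls, $\varphi$-cones and the $\varphi$-geometry in place of the Euclidean objects, the treatment of the $\infty$-eigenvalue problem in the literature (cf.\ \cite{belloni_kawohl_juutinen} and the references therein), keeping careful track of the slopes forced by $\varphi^{*}(D\log u) \geq \Lambda$ along $\varphi$-geodesics running out to $\partial\Omega$ and of the cusp-type behavior of $u$ and of the cone barriers at their vertices. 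Granting this, $\Lambda R_{\varphi} \leq 1$, and combining with the previous paragraph yields $\Lambda = \Lambda_{\infty}(\varphi)$.
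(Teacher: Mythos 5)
Your proof of positivity via cone comparison and the strong minimum principle is sound (modulo the implicit assumptions that $\Omega$ is connected and $u\not\equiv 0$, which are standard for eigenvalue problems). Your argument for $\Lambda \geq \Lambda_{\infty}(\varphi)$ is also correct and rather neat: rescaling $u$ by $\Lambda M$ turns a subsolution of the eigenvalue problem into a subsolution of the distance-function equation \eqref{E: distance} because the zeroth-order term $-\Lambda u \geq -\Lambda M$ dominates $-\Lambda M$, after which Theorem~\ref{T: distance function} immediately forces $M \leq \Lambda M R_\varphi$; this route does not appear explicitly in the paper, which simply invokes \cite{belloni_kawohl_juutinen}.

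However, the crucial direction $\Lambda \leq \Lambda_{\infty}(\varphi)$, which is the genuinely hard part, is not proved in your proposal but only sketched. You correctly diagnose the obstruction---the zeroth-order term $-\Lambda u$ has the wrong sign, so Propositions~\ref{P: main elliptic} and~\ref{P: elliptic 2d norm} do not apply to \eqref{E: infinity eigenvalue} directly---and you correctly sense that $\log u$ must play a role, but you do not articulate the mechanism that resolves the problem. The paper's key device, stated immediately after the theorem, is the Hopf--Cole transformation: if $u$ solves \eqref{E: infinity eigenvalue}, then $v=\log u$ solves the transformed equation \eqref{E: hopf cole equation}, in which the problematic zeroth-order term has been absorbed into the gradient structure via the quadratic term $-\varphi^{*}(Dv)^{2}$. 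The paper then establishes a comparison principle for this transformed equation (Theorem~\ref{T: infinity eigenvalue comparison}), which is exactly what makes the uniqueness argument of \cite{belloni_kawohl_juutinen} go through in the Finsler setting. Without this step your argument stalls at precisely the point you flag as ``the hard part,'' and an appeal to ``careful tracking of slopes along $\varphi$-geodesics'' does not substitute for it: the barriers you propose do not interact directly with the eigenvalue equation in a way that forces a contradiction, which is why the logarithmic change of variables (transferring the problem to one that satisfies comparison) is needed. In short, the missing idea is the Hopf--Cole transform together with Theorem~\ref{T: infinity eigenvalue comparison}.
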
  
		
The proof follows as in \cite{belloni_kawohl_juutinen}, the only new necessary ingredient being Proposition~\ref{P: main elliptic} and Proposition~\ref{P: elliptic 2d norm}.  To start with, if $u$ is a viscosity solution of \eqref{E: infinity eigenvalue} and $v = \log(u)$, then $v$ is a viscosity solution of the PDE:
	\begin{equation} \label{E: hopf cole equation}
		\left\{ \begin{array}{r l}
			\min \left\{\varphi^{*}(Dv) - \Lambda, - \langle D^{2}v \cdot \partial \varphi^{*}(Dv), \partial \varphi^{*}(Dv) - \varphi^{*}(Dv)^{2} \rangle \right\} = 0 & \text{in} \ \ \Omega, \\[1.2mm]
			\lim_{\delta \to 0^{+}} \max\left\{v(x) \, :  \, \text{dist}(x,\partial \Omega) \leq \delta \right\} = -\infty.
		\end{array} \right.
	\end{equation}

Here, exactly as in \cite{juutinen_lindqvist_manfredi}, we can prove a comparison principle for \eqref{E: hopf cole equation} provided at least one is finite on the boundary.

\begin{theorem} \label{T: infinity eigenvalue comparison} If $\varphi$ is a Finsler norm satisfying \eqref{A: running assumptions}, $\Omega \subseteq \mathbb{R}^{d}$ is a bounded open set, and $(w,v) \in USC(\Omega) \times LSC(\Omega)$ satisfy
	\begin{align*}
		\min \left\{ \varphi^{*}(Dw) - \Lambda, - \langle D^{2}w \cdot \partial \varphi^{*}(Dw), \partial \varphi^{*}(Dw) \rangle - \varphi^{*}(Dw)^{2} \right\} \leq 0 \ \ \text{in} \ \ \Omega \\
		\min \left\{ \varphi^{*}(Dv) - \Lambda, - \langle D^{2}v \cdot \partial \varphi^{*}(Dv), \partial \varphi^{*}(Dv) \rangle - \varphi^{*}(Dv)^{2} \right\} \geq 0 \ \ \text{in} \ \ \Omega, \\
		\lim_{\delta \to 0^{+}} \sup \left\{ w(x) - v(y) \, :  \, \|x - y\| + \text{dist}(x,\partial \Omega) + \text{dist}(y,\partial \Omega) \leq \delta \right\} \leq 0,
	\end{align*}
then $w \leq v$ in $\Omega$.  \end{theorem}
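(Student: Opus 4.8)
The plan is to argue by contradiction along the lines of Juutinen, Lindqvist, and Manfredi \cite{juutinen_lindqvist_manfredi}, using Proposition~\ref{P: main elliptic} (or Proposition~\ref{P: elliptic 2d norm} when $d = 2$) wherever they invoked comparison for the Euclidean infinity Laplacian. Suppose $M := \sup_{\Omega}(w - v) > 0$. Unwinding the definition of viscosity solution for \eqref{E: hopf cole equation}, the supersolution $v$ satisfies, in the viscosity sense, \emph{both} the Eikonal inequality $\varphi^{*}(Dv) \geq \Lambda$ and $-\underline{G}^{\varphi}(Dv, D^{2}v) \geq \varphi^{*}(Dv)^{2} \geq \Lambda^{2} > 0$; in particular $v$ is a strict supersolution of the $\varphi$-infinity Laplacian equation (so, by Theorem~\ref{T: cone comparison}, $v \in CCB_{\varphi}(\Omega)$ with a quantitative margin). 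Dually, at every point where a smooth function $\phi$ touches $w$ from above, the subsolution property forces the dichotomy: either $\varphi^{*}(D\phi) \leq \Lambda$, or $\overline{G}_{\varphi}(D\phi, D^{2}\phi) \geq -\varphi^{*}(D\phi)^{2}$.

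I would then perturb $v$ so that the Eikonal branch becomes strict while strict $\varphi$-infinity superharmonicity is retained --- this is exactly the point of the device in \cite{juutinen_lindqvist_manfredi}. Scaling $v \mapsto (1 + \epsilon)v$ makes $\varphi^{*}(D((1+\epsilon)v)) - \Lambda \geq \epsilon \Lambda > 0$ and keeps $-\underline{G}^{\varphi}(D((1+\epsilon)v), D^{2}((1+\epsilon)v)) \geq (1+\epsilon)\Lambda^{2} > 0$; this is then combined with the quadratic correction already used in the proofs of Theorem~\ref{T: cone comparison} and Theorem~\ref{T: distance function} (subtracting a small concave quadratic, adjusted to preserve the boundary behaviour) so that the curvature branch of \eqref{E: hopf cole equation} is also satisfied with a strictly positive margin, and so that the perturbed supersolution $v_{\epsilon}$ tends to $v$ as $\epsilon \to 0^{+}$. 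The joint boundary hypothesis, together with at least one of $w, v$ being finite at $\partial \Omega$, guarantees that the supremum of $w - v_{\epsilon}$ is, for small $\epsilon$, positive and not approached along $\partial \Omega$.

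Next I would apply the doubling argument of Theorem~\ref{T: main parabolic} / Proposition~\ref{P: main elliptic} to the pair $(w, v_{\epsilon})$: double the spatial variable against $\psi(x - y)^{4}/(4 \zeta)$ with $\psi$ a shielding norm for $\varphi$ (when $d = 2$, use instead the approximations underlying Proposition~\ref{P: elliptic 2d norm}), send the auxiliary parameters to their limits, and use the Jensen--Ishii lemma and Lemma~\ref{L: matrix lemma} to obtain a common gradient $p \neq 0$ (nonzero since $v_{\epsilon}$ forces $\varphi^{*}(p) > \Lambda$) and matrices $X \leq Y$ with $X, Y \in \mathcal{S}(p, \varphi)$. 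At the doubled maximum the subsolution dichotomy gives either $\varphi^{*}(p) \leq \Lambda$ or $\overline{G}_{\varphi}(p, X) \geq -\varphi^{*}(p)^{2}$. The first alternative is ruled out by the strict Eikonal inequality for $v_{\epsilon}$, so the second holds; then compatibility (which gives $\overline{G}_{\varphi}(p, X) = \underline{G}^{\varphi}(p, X)$ since $X \in \mathcal{S}(p, \varphi)$), ellipticity ($\underline{G}^{\varphi}(p, X) \leq \underline{G}^{\varphi}(p, Y)$), and the strict curvature inequality for $v_{\epsilon}$ close a contradiction. Letting $\epsilon \to 0^{+}$ gives $w \leq v$.

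The hard part is the perturbation step: because at the common gradient $p$ the subsolution's term $\varphi^{*}(Dw)^{2}$ and the supersolution's term $\varphi^{*}(Dv)^{2}$ would otherwise cancel, a naive scaling strengthens one branch of the minimum only at the expense of the other, so the more delicate choice of \cite{juutinen_lindqvist_manfredi} --- strict Eikonal \emph{and} a strictly positive curvature margin, arranged simultaneously --- is genuinely needed; everything Finsler-geometric (shielding norms, compatibility, Lemma~\ref{L: matrix lemma}) is used only to drive the Jensen--Ishii matrices into $\mathcal{S}(p, \varphi)$, after which the argument is insensitive to which case of \eqref{A: running assumptions} holds.
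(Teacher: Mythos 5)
Your overall strategy matches the paper's: perturb $v$ to a strict supersolution $v_{\epsilon}$ (following Belloni--Kawohl--Juutinen, cf.\ Juutinen--Lindqvist--Manfredi), then run the doubling/comparison machinery and send $\epsilon \to 0^{+}$. The paper gets there faster by treating $\overline{F}(p,X) = \max\{\Lambda - \varphi^{*}(p),\ \overline{G}_{\varphi}(p,X) + \varphi^{*}(p)^{2}\}$ (and the analogous $\underline{F}$) as a black box satisfying \eqref{A: elliptic_1}--\eqref{A: semicontinuous} and \eqref{A: consistency}, and then simply invoking Proposition~\ref{P: main elliptic} (or Proposition~\ref{P: elliptic 2d norm}) on $(w,\tilde{v}_{\epsilon})$; you re-derive the doubling argument inline, which is equivalent but longer.

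One concrete imprecision worth fixing: the scaling $v \mapsto (1+\epsilon)v$ does \emph{not} preserve the curvature branch of \eqref{E: hopf cole equation}, it breaks it. Indeed $\underline{G}^{\varphi}(\lambda p,\lambda X) = \lambda\,\underline{G}^{\varphi}(p,X)$ while $\varphi^{*}(\lambda p)^{2} = \lambda^{2}\varphi^{*}(p)^{2}$, so
\begin{equation*}
- \underline{G}^{\varphi}\bigl(D((1+\epsilon)v), D^{2}((1+\epsilon)v)\bigr) - \varphi^{*}\bigl(D((1+\epsilon)v)\bigr)^{2} \;\geq\; -\epsilon(1+\epsilon)\,\varphi^{*}(Dv)^{2},
\end{equation*}
which is negative, not positive. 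Your middle paragraph's phrasing (``keeps $-\underline{G}^{\varphi} \geq (1+\epsilon)\Lambda^{2} > 0$'') is true but irrelevant, since it is the \emph{difference} that must stay nonnegative, and it does not. Your final paragraph correctly flags this cancellation, but the body of the argument should not suggest that scaling alone helps both branches. The actual Belloni--Kawohl--Juutinen perturbation is built so that strictness in both branches is achieved simultaneously; once you accept that such a $\tilde{v}_{\epsilon}$ exists (which, like the paper, you can take as a citation), the rest of your outline is sound. The boundary hypothesis must also be tracked through the perturbation, as you note, which is where the uniform bound $\|\tilde{v}_{\epsilon} - v\|_{L^{\infty}(\Omega)} \leq C\epsilon$ enters.
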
 

\begin{proof}  As shown in \cite{belloni_kawohl_juutinen}, we can find a strict positive supersolution $\tilde{v}_{\epsilon}$ such that $\|\tilde{v}_{\epsilon} - v\|_{L^{\infty}(\Omega)} \leq 2\epsilon$.   Applying the propositions to $\tilde{v}$ and $\tilde{w}_{\epsilon} + 3\epsilon$, we find
	\begin{equation*}
		\sup \left\{ w(x) - \tilde{v}_{\epsilon}(x) \, :  \, x \in \Omega \right\} \leq 3\epsilon.
	\end{equation*}
and, thus,
	\begin{equation*}
		\sup \left\{ w(x) - v(x) \, :  \, x \in \Omega \right\} \leq 5 \epsilon.
	\end{equation*}
We conclude upon sending $\epsilon \to 0^{+}$.  \end{proof}  

Using Theorem \ref{T: infinity eigenvalue comparison}, the proof of Theorem \ref{T: well defined eigenvalue} follows as in \cite{belloni_kawohl_juutinen}.

\appendix

\section{Dimension Two}  \label{Appendix: 2d}

When $d = 2$, the simple geometry allows  to easily cook up nice norms to use in a comparison proof.  This enables us to generalize the comparison principle to operators adapted to arbitrary norms provided we impose slightly stronger assumptions.
\vs

In what follows, we  consider operators $\overline{F}, \underline{F} : \mathbb{R}^{2} \times \mathcal{S}^{2} \to \mathbb{R}$ with the property  that there is a sequence $(e_{n})_{n \in \mathbb{N}} \subseteq S^{1}$ such that 
	\begin{align}
		\lim_{N \to \infty} \sup \left\{ \frac{\overline{F}(ae_{n},X) - \underline{F}(ae_{n},X)}{1 + \|X\|} \, :  \, a > 0, \, \, n \geq N, \, \, X \in \mathcal{S}^{2} \right\} = 0, \label{A: small variation} \\
		\overline{F}(ae_{n},X) = \underline{F}(ae_{n}, X) \quad \text{if} \, \, a > 0, \, \, n \in \mathbb{N}, \, \, X \in \mathcal{S}_{\langle e_{n} \rangle}, \label{A: flat 2d} \\
		\overline{F}(p,X) = \underline{F}(p,X) \ \  \text{if} \ \ (p,X) \in \left(\mathbb{R}^{2} \setminus \bigcup_{n = 1}^{\infty} \mathbb{R}_{+} \{e_{n}\} \right) \times \mathcal{S}^{2}. \label{A: consistency 2d}
	\end{align}
Below we show that these assumptions apply, in particular, to an arbitrary infinity Laplace operator.
\vs	
The following theorem is a slight variation of the one appearing in \cite{level_set}.

\begin{theorem} \label{T: 2D comparison} Suppose that $\overline{F}, \underline{F} : \mathbb{R}^{2} \times \mathcal{S}^{2} \to \mathbb{R}$ satisfy \eqref{A: elliptic_1}, \eqref{A: elliptic_2}, \eqref{A: semicontinuous}, \eqref{A: small variation}, and \eqref{A: consistency 2d}.  If $w \in USC(\mathbb{R}^{2} \times (0,T))$ and $v \in LSC(\mathbb{R}^{2} \times (0,T))$ are bounded and
		\begin{align*}
			w_{t} - \overline{F}(Dw,D^{2}w) \leq 0 \ \ \text{in} \ \ \mathbb{R}^{2} \times (0,T), \\
			v_{t} - \underline{F}(Dv,D^{2}v) \geq 0 \ \ \text{in} \ \ \mathbb{R}^{2} \times (0,T), \\
			\lim_{\delta \to 0^{+}} \sup \left\{ w^{*}(x,0) - v_{*}(y,0) \, :  \, \|x - y\| \leq \delta \right\} \leq 0,
		\end{align*}
	then $w \leq v$ holds in $\mathbb{R}^{2} \times (0,T)$.  Furthermore, for each $u_{0} \in BUC(\mathbb{R}^{2})$, there is a unique bounded viscosity solution $u$ of \eqref{E: parabolic}.\end{theorem}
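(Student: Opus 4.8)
The plan is to run Ishii's doubling argument from the proof of Theorem~\ref{T: main parabolic}, with the auxiliary norm supplied by the two-dimensional construction of the first author in \cite{level_set} (improving \cite{ohnuma_sato}) in place of a shielding norm, and to deduce existence by Perron's method. Suppose the comparison fails, so that $\sup\{w(x,t)-v(x,t)-\sigma t\}>0$ for some $\sigma>0$. Fix this $\sigma$; as in Theorem~\ref{T: main parabolic}, the initial condition fixes a threshold $\zeta_0>0$ below which the maximum of the doubled functional $\Phi_{\zeta,\beta}$ occurs at positive times, and I would choose and freeze some $\zeta\in(0,\zeta_0)$. Using \eqref{A: small variation}, pick $N$ so large that $\overline{F}(ae_n,X)-\underline{F}(ae_n,X)\le\eta(1+\|X\|)$ for all $n\ge N$, $a>0$, $X\in\mathcal{S}^2$, where $\eta>0$ is a small parameter fixed at the end in terms of $\sigma$ and the bounds on the auxiliary norm.

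The building block I would record first is the following two-dimensional fact, which is the adaptation of the construction in \cite{level_set} to one-sided discontinuity sets $\mathbb{R}_+\{e_n\}$ and to the compatibility condition \eqref{A: flat 2d} phrased through $\mathcal{S}_{\langle e_n\rangle}$: for each $N$ there is a Finsler norm $\psi=\psi_N\in C^2(\mathbb{R}^2\setminus\{0\})$ which is affine on an open cone around each ray $\mathbb{R}_+\{\pm e_1\},\dots,\mathbb{R}_+\{\pm e_N\}$ --- so that $D\psi(\xi)\parallel e_i$ and $D^2\psi(\xi)=0$, hence $D^2\psi(\xi)\in\mathcal{S}_{\langle e_i\rangle}$, on that cone --- while $\|D^2\psi(\xi)\|\le C\|\xi\|^{-1}$ with $C$ independent of $N$. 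In $d=2$ this is elementary: one starts from a fixed smooth uniformly convex norm and flattens a small neighbourhood of each of the finitely many boundary points whose outer normal is $\pm e_i$, matching the curvature to zero in a $C^2$ fashion at the ends of each flat face; clustering of the $e_n$'s is absorbed by flattening a common arc.

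With $\psi=\psi_N$ in hand I would repeat the doubling step of Theorem~\ref{T: main parabolic} verbatim: penalize by $\psi(x-y)^4/(4\zeta)+\beta\|y\|^4$, apply \cite[Lemma~1]{ishii}, send $\beta\to0^+$, and extract $\bar\xi\in\mathbb{R}^2$ and $\bar{X}\le\bar{Y}$ with $-3\bar{A}(\bar\xi)\le\bar{X}\le\bar{Y}\le3\bar{A}(\bar\xi)$, where $\bar{p}(\bar\xi)=\zeta^{-1}\psi(\bar\xi)^3D\psi(\bar\xi)$ and $\bar{A}(\bar\xi)=3\zeta^{-1}\psi(\bar\xi)^2D\psi(\bar\xi)\otimes D\psi(\bar\xi)+\zeta^{-1}\psi(\bar\xi)^3D^2\psi(\bar\xi)$. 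The degenerate case $\bar{p}(\bar\xi)=0$ forces $\bar{X}=\bar{Y}=0$ and is closed exactly as in Theorem~\ref{T: main parabolic}; otherwise, note $\widehat{\bar{p}(\bar\xi)}=\widehat{D\psi(\bar\xi)}$ and split into three cases according to this direction. If it is not of the form $\pm e_n$, then \eqref{A: consistency 2d} gives $\overline{F}(\bar{p},\bar{X})=\underline{F}(\bar{p},\bar{X})$ and the ellipticity chain of Theorem~\ref{T: main parabolic} yields $0<\sigma\le0$. If it equals $\pm e_i$ with $i\le N$, then $\bar\xi$ lies in a flat cone of $\psi_N$, so $D^2\psi(\bar\xi)=0$ and $\bar{A}(\bar\xi)=3\zeta^{-1}\psi(\bar\xi)^2D\psi(\bar\xi)\otimes D\psi(\bar\xi)\in\mathcal{S}_{\langle e_i\rangle}$; Lemma~\ref{L: matrix lemma} forces $\bar{X},\bar{Y}\in\mathcal{S}_{\langle e_i\rangle}$, and \eqref{A: flat 2d} plus ellipticity again give the contradiction. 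Finally, if it equals $\pm e_n$ with $n>N$, then $\sigma\le\overline{F}(\bar{p},\bar{X})-\underline{F}(\bar{p},\bar{Y})\le\overline{F}(\bar{p},\bar{Y})-\underline{F}(\bar{p},\bar{Y})\le\eta(1+\|\bar{Y}\|)\le\eta(1+3\|\bar{A}(\bar\xi)\|)\le\eta(1+C'\zeta^{-1/2})$, using the a priori bound $\psi(\bar\xi)^4/\zeta\le C$ (from boundedness of $w,v$) together with $\|D^2\psi_N(\xi)\|\le C\|\xi\|^{-1}$; since $\zeta$ was frozen before $\eta$, choosing $\eta$ small makes the right side $<\sigma$, a contradiction. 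The one place where genuine work is required is the uniform-in-$N$ construction of the norms $\psi_N$ above; everything else is a rerun of Theorem~\ref{T: main parabolic}.

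For the Cauchy problem, uniqueness is immediate from the comparison principle. For existence I would invoke Perron's method: for a fixed smooth norm $\psi$ and any $y\in\mathbb{R}^2$, $\varepsilon>0$, the function $(x,t)\mapsto u_0(y)+\varepsilon+C_\varepsilon\psi(x-y)^2+K_\varepsilon t$ is a classical strict supersolution of \eqref{E: parabolic} once $K_\varepsilon$ is large, where $C_\varepsilon$ is chosen --- using $u_0\in BUC(\mathbb{R}^2)$, so that $u_0(x)\le u_0(y)+\varepsilon+C_\varepsilon\|x-y\|^2$ --- independently of $y$; taking the infimum over $y$ and $\varepsilon$ produces a bounded supersolution $\overline{u}$ with $\overline{u}(\cdot,0)=u_0$, and symmetrically a bounded subsolution $\underline{u}$ with $\underline{u}(\cdot,0)=u_0$. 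Perron's method then yields a bounded viscosity solution $u$ with $\underline{u}\le u\le\overline{u}$, hence $u(\cdot,0)=u_0$; the comparison principle makes $u$ the unique bounded solution, and combined with spatial translation invariance it transfers a modulus of continuity from $u_0$ to $u$, so that $u\in BUC(\mathbb{R}^2\times[0,T])$.
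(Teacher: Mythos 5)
Your proof follows the same route the paper intends (and sketches in its one-paragraph remark): run Ishii's doubling argument from Theorem~\ref{T: main parabolic} with a sequence of $C^2$ norms $\psi_N$ whose Hessian vanishes at the first $N$ discontinuity directions, close the tail $n>N$ using \eqref{A: small variation} and the quantitative bound $\|\bar A(\bar\xi)\|\lesssim \zeta^{-1/2}$, and obtain existence via Perron's method. The three-way case split on the direction of $\bar p(\bar\xi)$, the ordering of the parameters $\sigma,\zeta,\eta,N$ (which is circularity-free once one notes the $\psi_N$ are uniformly equivalent to the Euclidean norm and have uniformly bounded Hessians), and the Perron step are all correct.

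Two remarks. First, your case $i\le N$ genuinely uses \eqref{A: flat 2d}, which is missing from the hypothesis list of Theorem~\ref{T: 2D comparison} as printed; since Proposition~\ref{P: 2d case} does verify \eqref{A: flat 2d} before invoking the theorem, and since without it there is nothing forcing $\overline{F}(ae_i,\cdot)=\underline{F}(ae_i,\cdot)$ on $\mathcal{S}_{\langle e_i\rangle}$ for the finitely many ``large'' discontinuities, this looks like an omission in the paper's statement. Your use of it is correct and, as far as I can tell, necessary. Second, the claim that ``clustering of the $e_n$'s is absorbed by flattening a common arc'' does not quite work as stated: on a flat arc the outer normal is a single constant direction $e^*$, so if $e_i\ne e^*$ the unique boundary point of $\{\psi_N=1\}$ with outer normal $e_i$ lies off the flattened arc and there is no reason its curvature vanishes there. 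What one actually needs (and what is the content of the reference in \cite{level_set}) is to kill the curvature of the unit sphere $\{\psi_N=1\}$ at each of the finitely many distinct boundary points whose outer normal is one of $e_1,\dots,e_N$, without introducing open flat faces; this is a $C^2$ perturbation (continuous nonnegative curvature with finitely many zeros), it keeps the Gauss map a homeomorphism, and the uniform-in-$N$ Hessian bound follows from the fixed total turning of a closed convex curve in the plane. You flag this construction as the place where ``genuine work is required,'' which is fair, but the specific fix you offer for clustering is the one step that would need to be replaced.
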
  
	
Since the theorem follows by arguing exactly as in \cite{level_set}, we do not repeat  the proof here.  The idea is that \eqref{A: small variation} implies that, up to a small error, $\overline{F}$ and $\underline{F}$ coincide at all but finitely many points.  For operators with finitely many discontinuities, it is easy to find Finsler norms $\varphi$ such that \eqref{A: consistency} holds. However,  \cite{level_set} shows how to construct a sequence of such norms that exploit \eqref{A: small variation}.
\vs
Arguing similarly, we obtain the following comparison result for elliptic problems.

\begin{prop} \label{P: elliptic 2d norm} Under the assumptions of Theorem \ref{T: 2D comparison}, if $U \subseteq \mathbb{R}^{2}$ is a bounded open set, $f : \Omega \to (0,\infty)$ is a positive continuous function, and $(w,v) \in USC(\Omega) \times LSC(\Omega)$ satisfy
	\begin{gather*}
		- \overline{F}(Dw,D^{2}w) \leq 0 \ \ \text{in} \ \ \Omega, \quad -\underline{F}(Dv,D^{2}v) \geq f \ \ \text{in} \ \ \Omega, \\
		\lim_{\delta \to 0^{+}} \sup \left\{ w(x) - v(y) \, :  \, \|x - y\| \leq \delta, \, \, \text{dist}(x,\partial \Omega) + \text{dist}(y,\partial \Omega) \leq \delta \right\} \leq 0,
	\end{gather*}
then 
	\begin{equation*}
		\sup \left\{ w(x) - v(x) \, :  \, x \in \Omega \right\} \leq 0.
	\end{equation*}
\end{prop}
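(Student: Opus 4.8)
The plan is to transcribe the contradiction-and-doubling argument behind Theorem~\ref{T: main parabolic} and Proposition~\ref{P: main elliptic}, replacing the single shielding norm by the sequence of planar $C^{2}$ Finsler norms produced in \cite{level_set} (cf.\ \cite{ohnuma_sato} and \cite[Section~5.3]{crandall_gunnarsson_wang}) that is adapted to an operator whose gradient discontinuities sit on the rays $\mathbb{R}_{+}\{e_{n}\}$ and satisfy \eqref{A: small variation}. One assumes, towards a contradiction, that $\sup\{w(x)-v(x):x\in\Omega\}>0$; note that, in contrast with Theorem~\ref{T: main parabolic}, no $\sigma t$-type perturbation is needed because the strict inequality is supplied for free by the source term, since $-\underline{F}(Dv,D^{2}v)\ge f$ with $f>0$.

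The key geometric input, available because $d=2$, is that for each $\epsilon>0$ one can build a Finsler norm $\psi_{\epsilon}\in C^{2}(\mathbb{R}^{2}\setminus\{0\})$ whose unit ball is a convex polygon with rounded corners having a straight edge with outer normal $e_{n}$ for each $n<N(\epsilon)$, where $N(\epsilon)$ is chosen so that \eqref{A: small variation} gives $\overline{F}(ae_{n},X)-\underline{F}(ae_{n},X)\le\epsilon(1+\|X\|)$ for all $a>0$ and $X\in\mathcal{S}^{2}$ whenever $n\ge N(\epsilon)$. Such a $\psi_{\epsilon}$ has the two properties that drive the argument: (i) if $D\psi_{\epsilon}(\xi)\in\mathbb{R}_{+}\{e_{n}\}$ with $n<N(\epsilon)$, then $\xi$ lies on a flat edge, so $D^{2}\psi_{\epsilon}(\xi)=0\in\mathcal{S}_{\langle e_{n}\rangle}$; and (ii) the rounded-corner geometry keeps $D\psi_{\epsilon}(\xi)$ off every ray $\mathbb{R}_{+}\{e_{n}\}$ except for the finitely many corresponding to the flat edges and the tail directions $n\ge N(\epsilon)$, so that on the contact points produced below the pair $(\overline{F},\underline{F})$ either agrees outright by \eqref{A: consistency 2d} or, in the tail case, differs by at most $\epsilon(1+\|X\|)$.

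With $\psi_{\epsilon}$ fixed, one maximizes over the compact set $\overline{\Omega}\times\overline{\Omega}$ the function
\[
\Phi(x,y)=w(x)-v(y)-\frac{\psi_{\epsilon}(x-y)^{4}}{4\zeta}
\]
at a point $(\bar{x}_{\zeta},\bar{y}_{\zeta})$, runs the standard penalization estimates so that $\zeta^{-1}\psi_{\epsilon}(\bar{x}_{\zeta}-\bar{y}_{\zeta})^{4}$ stays bounded and tends to $0$ and, by the boundary hypothesis, $\bar{x}_{\zeta},\bar{y}_{\zeta}$ eventually lie in a fixed compact subset of $\Omega$, and applies the maximum principle for semicontinuous functions (\cite[Lemma~1]{ishii}; see also \cite{user}) with $\bar{p}(\xi)=\zeta^{-1}\psi_{\epsilon}(\xi)^{3}D\psi_{\epsilon}(\xi)$ and $\bar{A}(\xi)=3\zeta^{-1}\psi_{\epsilon}(\xi)^{2}D\psi_{\epsilon}(\xi)\otimes D\psi_{\epsilon}(\xi)+\zeta^{-1}\psi_{\epsilon}(\xi)^{3}D^{2}\psi_{\epsilon}(\xi)$. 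This produces $X,Y\in\mathcal{S}^{2}$ with $-3\bar{A}(\bar{\xi})\le X\le Y\le 3\bar{A}(\bar{\xi})$, $\bar{\xi}=\bar{x}_{\zeta}-\bar{y}_{\zeta}$, together with $\overline{F}(\bar{p}(\bar{\xi}),X)\ge 0$ and $\underline{F}(\bar{p}(\bar{\xi}),Y)\le -f(\bar{y}_{\zeta})<0$, hence $\overline{F}(\bar{p}(\bar{\xi}),X)-\underline{F}(\bar{p}(\bar{\xi}),Y)\ge f(\bar{y}_{\zeta})>0$. One then distinguishes cases on $D\psi_{\epsilon}(\bar{\xi})$: if $\bar{\xi}=0$ or $D\psi_{\epsilon}(\bar{\xi})$ is off every ray $\mathbb{R}_{+}\{e_{n}\}$, then \eqref{A: consistency 2d} gives $\overline{F}(\bar{p}(\bar{\xi}),X)=\underline{F}(\bar{p}(\bar{\xi}),X)$, which with $X\le Y$ and \eqref{A: elliptic_2} forces $\overline{F}(\bar{p}(\bar{\xi}),X)-\underline{F}(\bar{p}(\bar{\xi}),Y)\le 0$, a contradiction; if $D\psi_{\epsilon}(\bar{\xi})\in\mathbb{R}_{+}\{e_{n}\}$ with $n<N(\epsilon)$, then property (i) gives $\bar{A}(\bar{\xi})\in\mathcal{S}_{\langle e_{n}\rangle}$, so Lemma~\ref{L: matrix lemma} puts $X,Y\in\mathcal{S}_{\langle e_{n}\rangle}$, and \eqref{A: flat 2d} again collapses $\overline{F}(\bar{p}(\bar{\xi}),X)=\underline{F}(\bar{p}(\bar{\xi}),X)$, giving the same contradiction; and if $D\psi_{\epsilon}(\bar{\xi})$ lies on a tail ray, \eqref{A: small variation} yields $\overline{F}(\bar{p}(\bar{\xi}),X)-\underline{F}(\bar{p}(\bar{\xi}),Y)\le\epsilon(1+\|X\|)$, so $f(\bar{y}_{\zeta})\le\epsilon(1+\|X\|)\le\epsilon(1+3\|\bar{A}(\bar{\xi})\|)$.

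The main obstacle is closing this last case: one must send $\zeta\to 0^{+}$ (along a subsequence along which $(\bar{x}_{\zeta},\bar{y}_{\zeta})$ and the direction $\bar{\xi}/\|\bar{\xi}\|$ converge) and then $\epsilon\to 0^{+}$ in such a way that the defect $\epsilon(1+3\|\bar{A}(\bar{\xi})\|)$ is annihilated while the lower bound on $f$ at the contact point persists; this couples the choice of $\psi_{\epsilon}$, in particular $N(\epsilon)$ and the curvature of $\psi_{\epsilon}$ on the tail region, to the $\zeta$-asymptotics of $\|\bar{A}(\bar{\xi})\|$. Verifying that such a coupled choice is possible is exactly the two-dimensional construction carried out in \cite{level_set}, and it is the only place where \eqref{A: small variation}--\eqref{A: consistency 2d} are used in full strength; granting it, the rest is a routine elliptic transcription of the proof of Theorem~\ref{T: main parabolic}, parallel to Proposition~\ref{P: main elliptic}.
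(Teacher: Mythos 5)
Your proposal follows the same route as the paper, which proves Proposition~\ref{P: elliptic 2d norm} by declaring it analogous to Theorem~\ref{T: 2D comparison}, itself deferred to the two-dimensional construction in \cite{level_set} (going back to \cite{ohnuma_sato} and \cite[Section~5.3]{crandall_gunnarsson_wang}). You correctly identify the elliptic transcription (with $f>0$ replacing the $\sigma t$-device), the case analysis on $D\psi_{\epsilon}(\bar{\xi})$ (flat edge / off the rays / tail ray), and the genuine technical burden in the tail case, namely the $\epsilon$--$\zeta$ coupling needed to beat the growth of $\|\bar{A}(\bar{\xi})\|$; and you honestly defer that coupling to \cite{level_set}, exactly as the paper does.

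One observation worth flagging: your argument invokes \eqref{A: flat 2d} in the flat-edge case, and this is unavoidable. In $d=2$, $D^{2}\psi_{\epsilon}(\xi)$ is a nonnegative multiple of $\eta\otimes\eta$ with $\eta\perp\xi$, while $D\psi_{\epsilon}(\xi)=ae_{n}$ forces $\langle e_{n},\xi\rangle>0$, so $D^{2}\psi_{\epsilon}(\xi)\in\mathcal{S}_{\langle e_{n}\rangle}$ can only hold when $D^{2}\psi_{\epsilon}(\xi)=0$, i.e.\ on a flat edge, and then $\bar{p}(\bar\xi)\in\mathbb{R}_{+}\{e_{n}\}$ lies outside the reach of \eqref{A: consistency 2d}, so only \eqref{A: flat 2d} can collapse $\overline{F}$ and $\underline{F}$ there. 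Yet the hypothesis list in Theorem~\ref{T: 2D comparison}, which Proposition~\ref{P: elliptic 2d norm} inherits, lists \eqref{A: elliptic_1}, \eqref{A: elliptic_2}, \eqref{A: semicontinuous}, \eqref{A: small variation}, and \eqref{A: consistency 2d} but omits \eqref{A: flat 2d}; this appears to be an oversight in the paper (Proposition~\ref{P: 2d case} verifies all three conditions including \eqref{A: flat 2d} precisely so that the theorem can be applied). You did the right thing by using it. Finally, your property (ii) is stated a bit confusingly -- as written it is vacuous, since every $e_{n}$ is either a flat-edge normal or a tail direction -- but the intended content (that $n<N(\epsilon)$ implies the contact point sits on a flat edge, by monotonicity of the boundary normal) is correct and is what drives the second case.
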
  

\subsection*{Application: Equations involving infinity Laplacian operators}  For our purpose, the main applications of Theorem \ref{T: 2D comparison} and Proposition \ref{P: elliptic 2d norm} are equations involving the infinity Laplacian operator with respect to an arbitrary Finsler norm $\varphi$.  All that we need to justify this assertion is to verify that, if $\varphi$ is a Finsler norm in $\mathbb{R}^{2}$, the pair $(\overline{F},\underline{F})$ satisfies \eqref{A: small variation}, \eqref{A: flat 2d}, and \eqref{A: consistency 2d}.  
\vs
The basic point that we need is the following lemma.

	\begin{lemma} \label{L: BV lemma}  If $\psi : \mathbb{R}^{2} \to [0,\infty)$ is a Finsler norm and $\mathscr{S} = \{q \in S^{1} \, :  \, \# \partial \psi(q) > 1\}$ is the set of directions in which $\psi$ is not differentiable, then $\mathscr{S}$ is countable and, for any $\delta > 0$, there is an $N \in \mathbb{N} \cup \{0\}$ such that		
		\begin{equation*}
			\sup \left\{ \text{diam}(\partial \psi(e)) \, :  \, e \in S^{1} \setminus \{e_{1},\dots,e_{N}\} \right\} \leq \delta.
		\end{equation*}
	\end{lemma}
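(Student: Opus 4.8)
The plan is to exploit the fact that a convex function in two variables has a gradient map of bounded variation, so that the ``jump set'' of the subdifferential of $\psi$ on the circle is governed by a finite total mass. First I would parametrize $S^{1}$ by angle, writing $e(\theta) = (\cos\theta,\sin\theta)$, and consider the support function $h = \psi^{*}$ restricted to the sphere, or equivalently the boundary curve $\partial\{\psi \leq 1\}$. The key observation is that $\partial\psi(e(\theta))$, for $e(\theta) \in S^{1}$, is a boundary face of the convex body $\{\psi^{*} \leq 1\}$, and, as $\theta$ traverses $[0,2\pi)$, the ``outer normal'' $e(\theta)$ sweeps the boundary of $\{\psi^{*}\le 1\}$ monotonically. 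A direction $e$ with $\#\partial\psi(e) > 1$ corresponds precisely to a flat segment in $\partial\{\psi^{*} \leq 1\}$ with outer normal $e$; distinct such directions give disjoint (up to endpoints) segments on the boundary curve. Since $\partial\{\psi^{*}\leq 1\}$ is a rectifiable closed curve of finite length $\ell$, it can contain at most countably many disjoint nondegenerate segments, and for each $\delta>0$ only finitely many of total length exceeding, say, $c\delta$ for an appropriate dimensional constant $c$. This yields both the countability of $\mathscr{S}$ and the quantitative statement.

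Concretely, I would argue as follows. Enumerate $\mathscr{S} = \{e_{1},e_{2},\dots\}$ (countability to be justified momentarily) so that the associated boundary segments $\Sigma_{n} = \partial\psi(e_{n}) \cap \{\psi^{*} = 1\}$ have nonincreasing lengths $|\Sigma_{1}| \geq |\Sigma_{2}| \geq \cdots$. Because the $\Sigma_{n}$ have pairwise disjoint relative interiors and lie on a curve of finite length $\ell$, we get $\sum_{n} |\Sigma_{n}| \leq \ell < \infty$; in particular $|\Sigma_{n}| \to 0$ as $n \to \infty$. Now I would relate $\operatorname{diam}(\partial\psi(e)) = |\Sigma(e)|$ to $|\Sigma(e)|$ itself, so the conclusion is immediate once we know: given $\delta > 0$, there is $N$ with $|\Sigma_{n}| \leq \delta$ for all $n > N$ (take $N$ so that $|\Sigma_{N+1}|\le\delta$, using $|\Sigma_n|\downarrow 0$), and for every $e \in S^{1}\setminus\{e_{1},\dots,e_{N}\}$ either $\partial\psi(e)$ is a single point (diameter $0$) or $e = e_{n}$ for some $n > N$, in which case $\operatorname{diam}(\partial\psi(e)) = |\Sigma_{n}| \leq \delta$. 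The countability of $\mathscr{S}$ itself follows from the same disjointness-plus-finite-length argument: the set of $n$ with $|\Sigma_{n}| > 1/k$ is finite for each $k\in\mathbb N$, so $\mathscr{S} = \bigcup_{k} \{n : |\Sigma_{n}| > 1/k\}$ is a countable union of finite sets.

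The main obstacle, and the step requiring the most care, is the precise identification of $\partial\psi(e)$ for $e \in S^{1}$ with a face of the dual unit ball, and the verification that distinct non-differentiability directions correspond to genuinely disjoint boundary segments whose diameters are controlled by their arclengths. This is where the representation formula \eqref{E: subdifferential} enters: for $e \in S^{1} \setminus \{0\}$ one has $\partial\psi(e) = \{p \in \{\psi^{*} = 1\} : \langle p, e\rangle = \psi(e)\}$, i.e.\ $\partial\psi(e)$ is exactly the face of $\{\psi^{*} \leq 1\}$ exposed by the linear functional $\langle\cdot,e\rangle$. Two such faces exposed by non-parallel directions $e \neq e'$ can share at most a common extreme point, hence have disjoint relative interiors; and a nondegenerate face is a line segment (being a convex subset of a curve in $\mathbb{R}^{2}$), whose diameter equals its length. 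Everything else is then bookkeeping with the finiteness of the perimeter of the convex body $\{\psi^{*} \leq 1\}$, which is standard.
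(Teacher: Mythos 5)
Your proposal is correct and follows the same line as the paper's proof: identifying $\partial\psi(e)$ for $e \in \mathscr{S}$ with a one-dimensional face of $\{\psi^{*}\leq 1\}$ (a line segment whose diameter equals its $\mathcal{H}^{1}$-measure), using that distinct such faces have pairwise disjoint relative interiors, and bounding $\sum_{q\in\mathscr{S}}\operatorname{diam}(\partial\psi(q))$ by the finite perimeter $\mathcal{H}^{1}(\{\psi^{*}=1\})$ to conclude that only finitely many segments can have length $\geq \delta$. The sole minor departure is the countability sub-argument: you extract countability directly from the same finite-length bound (each $\{e : \operatorname{diam}\,\partial\psi(e) > 1/k\}$ is finite, and union over $k$), whereas the paper instead associates to each $q$ the open cone $K_{q}=\{\alpha p : p\in\partial\psi(q),\ \alpha>0\}$, notes these are pairwise disjoint with nonempty interior exactly when $q\in\mathscr{S}$, and invokes separability of $\mathbb{R}^{2}$ --- both routes are fine, and yours has the small advantage of unifying the two conclusions under one estimate.
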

	
			\begin{proof}  We argue, first, that $\mathscr{S}$ is countable and, second, that, for each $\delta > 0$, we have
			\begin{equation*}
				\# \left\{q \in \mathscr{S} \, :  \, \text{diam}(\partial \psi(q)) \geq \delta \right\} < \infty.
			\end{equation*}
		
		To see that $\mathscr{S}$ is countable, observe that to each $q \in S^{1}$, we can associate a cone $K_{q} = \{x \in \mathbb{R}^{2} \, :  \, x = \alpha p \, \, \text{for some} \, \, p \in \partial \psi(q), \, \, \alpha > 0\}$.  Notice that $K_{q}$ has nonempty interior if and only if $q \in \mathscr{S}$.  In particular, $\{\text{rint}(K_{q}) \, :  \, q \in \mathscr{S}\}$ is a pairwise disjoint family of open subsets of $\mathbb{R}^{2}$.  By the separability of $\mathbb{R}^{2}$, $\mathscr{S}$ must be countable.
\vs		
		Next, observe that, for each $q \in \mathscr{S}$, the subdifferential $\partial \psi(q)$ is a one-dimensional face of $\{\psi^{*} \leq 1\}$ which, being convex,  has finite perimeter.  Accordingly, we can compute
			\begin{equation*}
				\infty > \mathcal{H}^{1}(\{\psi^{*} = 1\}) \geq \sum_{q \in \mathscr{S}} \mathcal{H}^{1}(\partial \psi(q)) = \sum_{q \in \mathscr{S}} \text{diam}(\partial \psi(q)).
			\end{equation*}
		Therefore, for any $\delta > 0$, at most finitely many $q \in \mathscr{S}$ have $\text{diam}(\partial \psi(q)) \geq \delta$.       
		 \end{proof} 
	
The previous lemma implies that, for any Finsler norm $\varphi$ in $\mathbb{R}^{2}$, the infinity caloric equation satisfies the asymptotic consistency condition \eqref{A: small variation}.

\begin{prop} \label{P: 2d case} If $\varphi$ is a Finsler norm in $\mathbb{R}^{2}$, the pair $(\overline{F},\underline{F})$ is given by \eqref{E: infinity laplace}, and, if $(e_{n})_{n \in \mathbb{N}} \subseteq S^{1}$ are the directions in which $\varphi^{*}$ is not differentiable, then \eqref{A: small variation}, \eqref{A: flat 2d}, and \eqref{A: consistency 2d} all hold. \end{prop}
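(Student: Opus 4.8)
\textbf{Proof plan for Proposition \ref{P: 2d case}.}
The plan is to verify that the pair $(\overline{F},\underline{F}) = (\overline{G}_{\varphi},\underline{G}^{\varphi})$ from \eqref{E: infinity laplace} satisfies \eqref{A: small variation}, \eqref{A: flat 2d}, and \eqref{A: consistency 2d}, where $(e_{n})_{n \in \mathbb{N}}$ is the enumeration of the directions of non-differentiability of $\varphi^{*}$ coming from Lemma \ref{L: BV lemma} applied to $\psi = \varphi^{*}$ (chosen so that $\mathrm{diam}(\partial \varphi^{*}(e_{n})) \to 0$). The assumptions \eqref{A: elliptic_1}, \eqref{A: elliptic_2}, and \eqref{A: semicontinuous} were already recorded in Proposition \ref{P: infinity laplace}. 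The three structural facts I will use repeatedly are: $\partial \varphi^{*}(p) = \partial \varphi^{*}(p/\|p\|)$ for $p \neq 0$ by one-homogeneity; $\partial \varphi^{*}(p) \subseteq \{\varphi = 1\}$ with $\langle p, q \rangle = \varphi^{*}(p)$ for every $q \in \partial \varphi^{*}(p)$, by the representation \eqref{E: subdifferential}; and $\{\varphi = 1\}$ is compact.

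For \eqref{A: consistency 2d}, I would argue that if $p \neq 0$ and $p/\|p\|$ is not one of the $e_{n}$, then $\varphi^{*}$ is differentiable at $p$, so $\partial \varphi^{*}(p) = \{D\varphi^{*}(p)\}$ is a singleton and $\overline{G}_{\varphi}(p,X) = \langle X D\varphi^{*}(p), D\varphi^{*}(p) \rangle = \underline{G}^{\varphi}(p,X)$ for every $X$. The only point of $\mathbb{R}^{2} \setminus \bigcup_{n} \mathbb{R}_{+}\{e_{n}\}$ not handled this way is $p = 0$, where one still has $\overline{G}_{\varphi}(0,0) = 0 = \underline{G}^{\varphi}(0,0)$; this suffices, since in the doubling argument behind Theorem \ref{T: 2D comparison} a vanishing test gradient forces the associated test Hessians to vanish (equivalently, one may adjoin $\{0\}$ to the exceptional set with no effect on the argument). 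For \eqref{A: flat 2d}, fix $n$, $a > 0$, and $X \in \mathcal{S}_{\langle e_{n}\rangle}$; since $\langle e_{n}\rangle$ is one-dimensional, $X = \lambda\, e_{n}\otimes e_{n}$ with $\lambda = \langle X e_{n}, e_{n}\rangle$, and for every $q \in \partial \varphi^{*}(a e_{n}) = \partial \varphi^{*}(e_{n})$ the identity $\langle e_{n}, q \rangle = \varphi^{*}(e_{n})$ gives $\langle Xq, q \rangle = \lambda\, \varphi^{*}(e_{n})^{2}$, independent of $q$; hence $\overline{G}_{\varphi}(ae_{n},X) = \underline{G}^{\varphi}(ae_{n},X)$.

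The main content is \eqref{A: small variation}, where the goal is to bound the defect $\overline{G}_{\varphi}(ae_{n},X) - \underline{G}^{\varphi}(ae_{n},X)$ linearly in $\|X\|$ with a coefficient that tends to $0$ as $n \to \infty$, uniformly in $a$ and $X$. Using compactness of $\partial \varphi^{*}(ae_{n}) = \partial \varphi^{*}(e_{n})$, pick $q, q' \in \partial \varphi^{*}(e_{n})$ realizing the max and the min of $\eta \mapsto \langle X\eta, \eta \rangle$; the parallelogram-type identity (as in the proof of Lemma \ref{L: matrix lemma}) gives
\[
	\overline{G}_{\varphi}(ae_{n},X) - \underline{G}^{\varphi}(ae_{n},X) = \langle X(q + q'), q - q' \rangle \leq \|X\|\,\|q + q'\|\,\|q - q'\|.
\]
Since $q, q' \in \{\varphi = 1\}$, we have $\|q + q'\| \leq 2R$ with $R = \max\{\|\eta\| : \varphi(\eta) = 1\}$, and $\|q - q'\| \leq \mathrm{diam}(\partial \varphi^{*}(e_{n}))$, whence
\[
	\frac{\overline{G}_{\varphi}(ae_{n},X) - \underline{G}^{\varphi}(ae_{n},X)}{1 + \|X\|} \leq 2R\,\mathrm{diam}(\partial \varphi^{*}(e_{n}))
\]
for all $a > 0$ and $X \in \mathcal{S}^{2}$. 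Taking the supremum over $n \geq N$ and sending $N \to \infty$, \eqref{A: small variation} follows from $\mathrm{diam}(\partial \varphi^{*}(e_{n})) \to 0$, which is exactly the conclusion of Lemma \ref{L: BV lemma}.

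I expect the only genuine obstacle to be bookkeeping rather than ideas: making the estimate for \eqref{A: small variation} truly uniform over all $a > 0$ (handled by the reduction $\partial \varphi^{*}(ae_{n}) = \partial \varphi^{*}(e_{n})$) and over all $X \in \mathcal{S}^{2}$ (handled by the factorization above, which isolates the $\|X\|$ dependence), together with pinning down the enumeration of the non-differentiability set so that Lemma \ref{L: BV lemma} delivers $\mathrm{diam}(\partial \varphi^{*}(e_{n})) \to 0$, and correctly disposing of the degenerate slice $p = 0$, which as noted contributes nothing.
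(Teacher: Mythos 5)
Your proof is correct and follows essentially the same route as the paper: enumerate the non-differentiability directions via Lemma \ref{L: BV lemma}, handle \eqref{A: consistency 2d} because the subdifferential is a singleton off the exceptional rays, handle \eqref{A: flat 2d} because $X\in\mathcal{S}_{\langle e_n\rangle}$ forces $\langle Xq,q\rangle$ to be constant on $\partial\varphi^*(e_n)$, and reduce \eqref{A: small variation} to the decay of $\mathrm{diam}(\partial\varphi^*(e_n))$.

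Two small points of comparison. For \eqref{A: small variation}, the paper simply asserts the bound $\delta^2$ after choosing $N$ with $\mathrm{diam}(\partial\varphi^*(e_n))\leq\delta$ for $n\geq N$; your explicit factorization $\langle Xq,q\rangle-\langle Xq',q'\rangle=\langle X(q+q'),q-q'\rangle\leq 2R\|X\|\delta$ shows that the defect is in fact of order $\delta$, not $\delta^2$, which is the correct scaling (the paper's exponent appears to be a slip, harmless since either tends to $0$). For \eqref{A: consistency 2d} at $p=0$, you are more scrupulous than the paper: the paper's assertion that $\partial\varphi^*(p)$ is a singleton off $\bigcup_n\mathbb{R}_+\{e_n\}$ fails at the origin (where $\partial\varphi^*(0)=\{\varphi=1\}$), so either one reads $\mathbb{R}_+$ as including $0$ or, as you correctly observe, one notes that at a vanishing test gradient the doubling argument forces the test Hessians to vanish, so the only datum used there is $\overline{G}_\varphi(0,0)=0=\underline{G}^\varphi(0,0)$. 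For \eqref{A: flat 2d} the paper invokes the inclusion $\mathcal{S}_{\langle e_n\rangle}\subseteq\mathcal{S}(e_n,\varphi)$ together with Proposition \ref{P: infinity laplace}, while you verify the scalar identity directly in dimension two; these are equivalent.
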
  

\begin{proof}  By Lemma \ref{L: BV lemma}, for each $\delta > 0$, we can choose $N \in \mathbb{N}$ such that
	\begin{equation*}
		\sup \left\{ \|q' - q\| \, :  \, q,q' \in \partial \varphi^{*}(e_{n}), \, \, n \geq N \right\} \leq \delta.
	\end{equation*}
Then \eqref{A: small variation} holds, since, by the definition of $(F^{*},F_{*})$,
	\begin{equation*}
		\sup \left\{ \frac{F^{*}(ae_{n},X) - F_{*}(ae_{n},X)}{1 + \|X\|} \, :  \, a > 0, \, \, n \geq N, \, \, X \in \mathcal{S}^{2} \right\} \leq \delta^{2}.
	\end{equation*}
\vs 
For any $n \in \mathbb{N}$, we know that $\mathcal{S}_{\langle e_{n} \rangle} \subseteq \mathcal{S}(e_{n},\varphi)$ by definition.  Therefore, Proposition \ref{P: infinity laplace} implies \eqref{A: flat 2d} directly.
\vs
At the same time, by the choice of $(e_{n})_{n \in \mathbb{N}}$, we know that $\partial \varphi^{*}(p) = \{D\varphi^{*}(p)\}$ if $p \notin \cup_{n = 1}^{\infty} \mathbb{R}_{+} \{e_{n}\}$.  Therefore, \eqref{A: consistency 2d} is an immediate consequence of the definitions. \end{proof}  

Combining Proposition \ref{P: 2d case} and Theorem \ref{T: 2D comparison}, we deduce a comparison result for the infinity caloric equation with respect to an arbitrary norm in $\mathbb{R}^{2}$:

\begin{corollary} \label{C: infinity caloric 2d} Suppose that $\varphi : \mathbb{R}^{2} \to [0,\infty)$ is any Finsler norm.  If $w \in USC(\mathbb{R}^{2} \times (0,T))$ and $v \in LSC(\mathbb{R}^{2} \times (0,T))$ are bounded and satisfy
	\begin{align*}
		w_{t} - \langle D^{2}w \cdot \partial \varphi^{*}(Dw), \partial \varphi^{*}(Dw) \rangle \leq 0 \ \ \text{in} \ \ \mathbb{R}^{2} \times (0,T), \\
		v_{t} - \langle D^{2} v \cdot \partial \varphi^{*}(Dv), \partial \varphi^{*}(Dv) \rangle \geq 0 \ \ \text{in} \ \ \mathbb{R}^{2} \times (0,T), \\
		\lim_{\delta \to 0^{+}} \sup \left\{ w^{*}(x,0) - v_{*}(y,0) \, :  \, \|x - y\| \leq \delta \right\} \leq 0,
	\end{align*}
then $w \leq v$ in $\mathbb{R}^{2} \times (0,T)$.  \end{corollary}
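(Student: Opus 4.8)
The plan is to obtain the corollary as an immediate consequence of the abstract two-dimensional comparison principle of Theorem \ref{T: 2D comparison} together with the structural facts about the Finsler infinity Laplacian collected in Proposition \ref{P: 2d case}. First I would set $\overline{F} = \overline{G}_{\varphi}$ and $\underline{F} = \underline{G}^{\varphi}$, the pair of operators from \eqref{E: infinity laplace}, and note that, by the conventions fixed around \eqref{E: phi infinity harmonic}, the viscosity differential inequalities assumed for $w$ and $v$ are precisely $w_{t} - \overline{F}(Dw,D^{2}w) \le 0$ and $v_{t} - \underline{F}(Dv,D^{2}v) \ge 0$ in $\mathbb{R}^{2} \times (0,T)$. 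Likewise, the hypothesis on the initial traces $w^{*}(\cdot,0)$ and $v_{*}(\cdot,0)$ is exactly the $\delta \to 0^{+}$ condition appearing in Theorem \ref{T: 2D comparison}.

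Next I would check that $(\overline{F},\underline{F})$ satisfies all the hypotheses of Theorem \ref{T: 2D comparison}. Ellipticity \eqref{A: elliptic_1}, \eqref{A: elliptic_2} and the semicontinuity requirement \eqref{A: semicontinuous} are supplied by Proposition \ref{P: infinity laplace}. For the remaining, genuinely two-dimensional, conditions, let $(e_{n})_{n \in \mathbb{N}} \subseteq S^{1}$ enumerate the directions in which $\varphi^{*}$ fails to be differentiable; since $d = 2$, Lemma \ref{L: BV lemma} guarantees that this set is countable, and Proposition \ref{P: 2d case} shows that, with this choice of $(e_{n})_{n \in \mathbb{N}}$, the pair $(\overline{F},\underline{F})$ satisfies \eqref{A: small variation}, \eqref{A: flat 2d}, and \eqref{A: consistency 2d}. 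With every hypothesis in place, Theorem \ref{T: 2D comparison} applies verbatim and yields $w \le v$ in $\mathbb{R}^{2} \times (0,T)$, which is the assertion.

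There is essentially no obstacle: all of the substance has been front-loaded into Proposition \ref{P: 2d case} (which rests on the finite-perimeter bookkeeping of Lemma \ref{L: BV lemma}) and into Theorem \ref{T: 2D comparison} (the adaptation of the construction of \cite{level_set}). The only point deserving a word of care is that $\overline{G}_{\varphi}$ and $\underline{G}^{\varphi}$ may genuinely disagree on $\bigl(\bigcup_{n} \mathbb{R}_{+}\{e_{n}\}\bigr) \times \mathcal{S}^{2}$; however, as explained in Remark \ref{R: ambiguous}, the comparison argument imported from \cite{level_set} only ever evaluates the operators on pairs $(p,X)$ with $X$ lying in the flat subspace $\mathcal{S}_{\langle e_{n}\rangle}$, where \eqref{A: flat 2d} forces the two operators to coincide, so this ambiguity is harmless.
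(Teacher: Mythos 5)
Your proof is correct and follows exactly the route the paper itself takes: the corollary is presented as an immediate consequence of combining Proposition \ref{P: 2d case} (which supplies \eqref{A: small variation}, \eqref{A: flat 2d}, \eqref{A: consistency 2d} for the pair $(\overline{G}_{\varphi},\underline{G}^{\varphi})$) with Theorem \ref{T: 2D comparison}, and you have simply spelled out the verification of the remaining hypotheses via Proposition \ref{P: infinity laplace}. Nothing is missing.
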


Comparison results for elliptic analogues can also be derived by invoking Proposition \ref{P: 2d case} with Proposition \ref{P: elliptic 2d norm}.  This is fully explained in Section \ref{S: variational} above.  

\section{Auxiliary Computations} \label{Appendix: technical lemmas}

\subsection*{Generalized tangent spaces of the $\ell^{\infty}-$norm}  Consider the case when $\varphi$ is the $\ell^{1}-$norm, that is, $\varphi(q) = \sum_{i = 1}^{d} |q_{i}|$.  In this case, as is well known, $\varphi^{*}(p) = \max_{i} |p_{i}|$.  Let us compute the generalized tangent spaces $\mathcal{T}(\cdot,\varphi)$ and space of matrices $\mathcal{S}(\cdot,\varphi)$ for this example.
\vs
In general, if $\psi$ is any Finsler norm, then $\mathcal{T}(0,\psi) = \{0\}$.  This follows immediately from the fact that $\partial \psi^{*}(0)$ contains a neighborhood of the origin.  From this, we deduce that $\mathcal{S}(0,\psi) = \{0\}$ independently of $\psi$.
\vs
Hence we only need to consider $p \in \mathbb{R}^{d} \setminus \{0\}$.  It will be convenient to introduce some notation.
\vs
First, we let $\{\bar{e}_{1},\dots,\bar{e}_{d}\}$ denote the standard orthonormal basis in $\mathbb{R}^{d}$.  Let $E$ denote the extreme points of $\{\varphi^{*} \leq 1\}$, that is, the set 
	\begin{equation*}
		E = \{\rho \bar{e}_{i} \, \mid \, \rho \in \{-1,1\}, \, \, i \in \{1,2,\dots,d\} \}.
	\end{equation*} 
and define the set-valued maps $J : \mathbb{R}^{d} \to \mathcal{P}(E)$ by 
	\begin{equation*}
		J(p) = \text{argmax}_{e \in E} \langle p, e \rangle, \quad L(p) = \text{argmin}_{e \in E} \langle p,e \rangle.
	\end{equation*} 
\vs
Given $p \in \mathbb{R}^{d} \setminus \{0\}$, an elementary computation shows that the subdifferential $\partial \varphi^{*}(p)$ is determined by
	\begin{equation*}
		\partial \varphi^{*}(p) = \text{conv}(J(p)).
	\end{equation*}
Thus, the generalized tangent space $\mathcal{T}(p,\varphi)$ is
	\begin{equation*}
		\mathcal{T}(p,\varphi) = \partial \varphi^{*}(p)^{\perp} = J(p)^{\perp} = \text{span}(\{e_{k} \, \mid \, |p_{k}| < \varphi^{*}(p)\}).
	\end{equation*}
From this and some elementary linear algebra, we find that the linear subspaces $\langle p \rangle \oplus \mathcal{T}(p,\varphi)$ can be written in the form
	\begin{equation*}
		\langle p \rangle \oplus \mathcal{T}(p,\varphi) = \left \langle \sum_{e \in J(p)} e \right \rangle \oplus \text{span}\{e_{k} \, \mid \, |p_{k}| < \varphi^{*}(p)\}.
	\end{equation*}
Notice that the dimension of these subspaces equals $d - \#J(p) + 1$.  Finally, by definition, $\mathcal{S}(p,\varphi)$ equals the set of symmetric matrices on this subspace.

\subsection*{$F_{1}$ is not a Finsler infinity Laplacian} \label{S: not finsler}  We show that the operator $F_{1}$ of \eqref{E: median operator} cannot be rewritten as a Finsler infinity Laplacian.  We argue by contradiction.
\vs
Suppose that $F_{1}(p,X) = \langle X \partial \varphi^{*}(p), \partial \varphi^{*}(p) \rangle$ for some Finsler norm $\varphi$.  Using the definition of $F_{1}$, this implies that, for each $i \in \{1,2,\dots,d\}$, $\varphi^{*}$ is piecewise linear in the set $\{p \in \mathbb{R}^{d} \, :  \, |p_{i}| < |p_{j}| \, \, \text{if} \, \, j \neq i\}$ and 
	\begin{equation*}
		X_{ii} = F_{1}(p,X) = \langle X D\varphi^{*}(p), D\varphi^{*}(p) \rangle \ \  \text{if} \ \ |p_{i}| < \min\{|p_{1}|,\dots,|p_{i-1}|,|p_{i+1}|,\dots,|p_{d}|\}.  
	\end{equation*}
Note that this holds independently of the choice of $X \in \mathcal{S}^{d}$.  Thus, 
	\begin{equation} \label{E: complete crap}
		D\varphi^{*}(p) \in \{\bar{e}_{i},-\bar{e}_{i}\} \ \  \text{if} \ \ |p_{i}| < \min\{|p_{1}|,\dots,|p_{i-1}|,|p_{i+1}|,\dots,|p_{d}|\}.
	\end{equation}
\vs
Fix an $i \in \{1,2,\dots,d\}$ and $p \in \mathbb{R}^{d} \setminus \{0\}$ with $|p_{i}| <  \min\{|p_{1}|,\dots,|p_{i-1}|,|p_{i+1}|,\dots,|p_{d}|\}$.  It follows from  \eqref{E: subdifferential} and \eqref{E: complete crap} that we can fix a $\rho \in \{-1,1\}$ such that
	\begin{equation*}
		\rho p_{i} = \langle D\varphi^{*}(p), p \rangle = \varphi^{*}(p).
	\end{equation*}
Assuming without loss of generality that $p_{1} > 0$, and replacing $p$ by $p + \alpha \bar{e}_{1}$ for some $\alpha > 0$ give
	\begin{equation*}
		\rho p_{i} = \langle D \varphi^{*}(p + \alpha \bar{e}_{1}), p + \alpha \bar{e}_{1} \rangle = \varphi^{*}(p + \alpha \bar{e}_{1}).
	\end{equation*}
At the same time, since $\varphi^{*}$ is a Finsler norm, 
	\begin{equation*}
		\varphi^{*}(p + \alpha \bar{e}_{1}) \geq \inf \left\{ \frac{\varphi^{*}(p')}{\|p'\|} \, :  \, p' \in \mathbb{R}^{d} \setminus \{0\} \right\} \|p + \alpha \bar{e}_{1}\|.
	\end{equation*}
The desired contradiction now follows, since
	\begin{equation*}
		\rho p_{i} = \lim_{\alpha \to \infty} \varphi^{*}(p + \alpha \bar{e}_{1}) \geq \inf \left\{ \frac{\varphi^{*}(p')}{\|p'\|} \, :  \, p' \in \mathbb{R}^{d} \setminus \{0\} \right\} \cdot \lim_{\alpha \to \infty} \|p + \alpha \bar{e}_{1}\| = \infty.
	\end{equation*}
\vs
\vs

\subsection*{Acknowledgements}  The second author was partially supported by the National Science Foundation grant
DMS-1900599, the Office for Naval Research grant N0001417- 12095, and the Air
Force Office for Scientific Research grant FA9550-18-1-0494. The first author was 
supported by the second author's National Science Foundation grant
DMS-1900599.

  \bibliographystyle{plain}
\bibliography{bibliography}

\end{document}